\documentclass{scrartcl}
\usepackage[utf8]{inputenc}
\usepackage[T1]{fontenc}
\usepackage{lmodern}
\usepackage{amsthm}
\usepackage{amsfonts}
\usepackage{mathtools}
\usepackage{stmaryrd}
\usepackage{MnSymbol}
\usepackage{xspace}
\usepackage{mathrsfs}
\usepackage{tikz-cd}
\usepackage{appendix}
\usepackage[french=guillemets]{csquotes}
\MakeOuterQuote{"}
\usepackage[shortlabels]{enumitem}
\setlist{nosep}
\usepackage{xcolor}
\usepackage{comment}

\usepackage[colorlinks=true, breaklinks=true, urlcolor= black, linkcolor= black, citecolor= black, 
bookmarksopen=true,linktocpage=true,plainpages=false,pdfpagelabels]{hyperref}
\usepackage[capitalise]{cleveref}

\theoremstyle{plain}

\newtheorem{theorem}{Theorem}[section]
\newtheorem*{theorem*}{Theorem}
\newtheorem{lemma}[theorem]{Lemma}
\newtheorem{proposition}[theorem]{Proposition}
\newtheorem{corollary}[theorem]{Corollary}
\newtheorem{fact}[theorem]{Fact}
\newtheorem{claim}[theorem]{Claim}

\theoremstyle{definition}

\newtheorem{definition}[theorem]{Definition}
\newtheorem{notation}[theorem]{Notation}

\theoremstyle{remark}

\newtheorem{remark}[theorem]{Remark}

\newcounter{claimqed}
\newcommand\claimqed{\setcounter{claimqed}{0}%
\leavevmode\unskip\penalty9999
\hbox{}\nobreak\hfill\quad\hbox{$\square_{\text{Claim}}$}}
\newenvironment{claimproof}[1]{\par\noindent\textit{Proof.}\space#1\setcounter{claimqed}{1}}{\ifnum\value{claimqed}=1\claimqed\fi}

\newcommand\A{A}
\newcommand\B{B}

\newcommand\I{\mathrm{I}}

\newcommand\p{p}

\newcommand\Ri{\mathbb{R}_{\infty}}

\newcommand\acl{\mathrm{acl}}
\newcommand\dcl{\mathrm{dcl}}

\newcommand{\tp}{\operatorname{tp}}
\renewcommand\epsilon{\varepsilon}

\renewcommand\mid{:}

\newcommand{\rest}{\upharpoonright}

\newcommand{\St}{\mathrm{St}}

\newcommand\ACVF{\mathrm{ACVF}}

\newcommand{\U}{\mathbb M}
\newcommand{\kres}{\mathbf k}

\newcommand{\defst}{\mathrm{DEF\text{-}ST}}

\newcommand{\an}{\mathrm{an}}
\newcommand{\M}{\mathbb M}

\newcommand{\DEFST}{\mathrm{DEF\text{-}ST}}

\newcommand{\R}{\mathbb R}

\newcommand{\N}{\mathbb N}

\title{Radon measures in the Berkovich space and generically stable measures in $\mathrm{ACVF}$}
\author{Alf Onshuus\thanks{}
\and Mariana Vicar\'ia\thanks{The author was supported by the Humboldt Foundation (Postdoctoral Fellowship).}}

\begin{document}

\maketitle
\section{Introduction}
In \cite{tamegeo}, Hrushovski and Loeser constructed a model theoretic analogue of the Berkovich analyfication: given an algebraic variety $X$ defined over a spherically complete algebraically closed valued field $M$, whose value groups is $(\mathbb{R},+, \leq,0)$, they constructed its stable completion $\widehat{X}(M)$, which is a strict-pro-definable set, and consists of the generically stable types centered in $X$; and they endowed it with an ind-definable topology and proved it was homeomorphic Berkovich analyfication $X^{an}$ defined by Berkovich in \cite{Berkovich}. One of the results in this paper is to extend this correspondence from types to measures. We prove that, when $M$ is spherically complete and its
value group is $\mathbb R$, Radon probability measures on $X^{\an}$
correspond canonically to global $M$-invariant generically stable
Keisler measures concentrating on $X$. Under this correspondence, the
Dirac measure at a point $\xi\in X^{\an}$ corresponds to the
generically stable type represented by $\xi$. Thus the
Hrushovski--Loeser correspondence between individual analytic points
and tame types appears as the atomic case of a more general
measure-theoretic correspondence.\\
To achieve this we first have to study and prove alternative characterizations of generically stable measures in $\ACVF$, results that generalize the theorem from \cite{HHM} for global $M$ invariant types: a global $M$-invariant type $p$ is generically stable, if and only it is stably dominated if and only it is orthogonal to the value group.  For this purpose, we needed to introduce measure-theoretic analogues of stable domination and orthogonality to
the value group, which are instead expressed through uniqueness of extensions. These results were heavily inspired in Keisler's paper \cite{Keisler}.\\
In the context of valued fields, let $M\models\ACVF$, and let $\mu$ be a global $M$-invariant Keisler measure. We write $\mathbb{M}$ to denote a monster model of $\mathrm{ACVF}$, and $\Gamma$ to indicate the value group. We say that $\mu$ is \emph{orthogonal to the value group} if, for every finite tuple $e$ from $\Gamma(\mathbb{M})$, the measure
$\mu\rest_M$ has a unique extension to $Me$. We say that $\mu$ is
\emph{stably dominated over $M$} if the restriction of $\mu$ to $M$-definable stable and stably embedded sets has a
unique extension to the full Boolean algebra of definable subsets, and
this extension is $\mu$. We can now state the first theorem of this paper, which answers a question of Gannon and Ye (see Question $1.1$ in \cite{extension}).
\begin{theorem}
\label{thm:introduction-main}
Let $M$ be a spherically complete model of $\ACVF$, and let $\mu$ be a
global $M$-invariant Keisler measure. The following conditions are
equivalent:
\begin{enumerate}
\item $\mu$ is stably
dominated over $M$;
\item $\mu$ is generically stable over $M$;
\item $\mu$ is orthogonal to the value group over $M$.
\end{enumerate}
\end{theorem}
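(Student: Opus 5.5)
I will prove the cycle of implications $(1)\Rightarrow(2)\Rightarrow(3)\Rightarrow(1)$. Throughout I use the standard facts for NIP theories: a global $M$-invariant measure $\mu$ is generically stable iff it is definable and finitely satisfiable in $M$, iff it commutes with every invariant measure ($\mu\otimes\nu=\nu\otimes\mu$). I also use the type-level theorem of \cite{HHM}: for a global $M$-invariant type, generic stability, stable domination and orthogonality to $\Gamma$ are equivalent. Here $M$ is spherically complete, so every type over $M$ has a global $M$-invariant extension. The second ingredient is Keisler's approach \cite{Keisler}. Smooth measures are approximated by weighted averages of types, and uniqueness of extensions of $\mu\rest_M$ is detected on the Boolean algebra generated by finitely many formulas.

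\textbf{$(1)\Rightarrow(2)$.} Let $\St_M$ denote the many-sorted structure of $M$-definable stable stably embedded sets. The restriction $\mu_0$ of $\mu$ to $\St_M$-definable sets lives in a stable theory, so it is generically stable. Concretely, $\mu_0$ is an average of definable types, and the generically stable extension $\tilde\mu_0$ is obtained by taking the unique nonforking extension of each. Stable domination says that $\mu$ is the unique extension of $\mu_0$. I will show that $\mu$ commutes with an arbitrary global $M$-invariant $\nu$. Both $\mu\otimes\nu$ and $\nu\otimes\mu$, restricted to the $x$-variable, give $\mu$. Their restrictions to stable data, namely $\St_{M}$-sets in $x$ together with arbitrary sets in $y$, agree because of commutation in the stable part. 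Uniqueness of extensions from the stable part should transfer to parameters from a realization of $\nu$. For this I would apply the hypothesis over an $|M|^+$-saturated base through $M$-invariance, and obtain equality of the two products. Hence $\mu$ is generically stable.

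\textbf{$(2)\Rightarrow(3)$.} Let $\mu$ be generically stable and $e\in\Gamma(\M)$. Take the support of $\mu$: every global type in the support is generically stable, since $\mu$ is finitely satisfiable and definable and in NIP the support consists of generically stable types. By \cite{HHM} each such type is orthogonal to $\Gamma$, so each has a unique extension to $Me$. Suppose two extensions $\lambda_1,\lambda_2$ of $\mu\rest_M$ to $Me$ differ on some $\phi(x,e)$. Then one can find a type $q$ over $M$ that is weighted by $\mu\rest_M$, not in the support of the generically stable extension, and splitting over $e$. This contradicts the fact that, for generically stable $\mu$, the measure $\mu\rest_M$ determines $\mu$, and that $\mu\rest_{Me}$ is forced. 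A cleaner route is the following. The function $e\mapsto\mu(\phi(x,e))$ is $M$-definable, and $\Gamma$-definable functions into $[0,1]$ from a generically stable measure are constant on $M$-types. One then checks, via the Borel definability of types over $Me$, that any extension must agree with $\mu$.

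\textbf{$(3)\Rightarrow(1)$.} This is the main obstacle. I would first reduce to types. Write $\mu\rest_M$ as a measure on $S_x(M)$. Orthogonality to $\Gamma$ should imply that $\mu\rest_M$-almost every type $p$ over $M$ is orthogonal to $\Gamma$, meaning $p$ has a unique extension to each $Me$. Then apply \cite{HHM} to the invariant extension of $p$, which is stably dominated. To get the almost-everywhere statement I would argue by contradiction. The set of $p$ with two distinct extensions to some $Me$ is a countable union over $\phi$ and rationals. If this set has positive measure, I build two distinct extensions of $\mu\rest_M$ to $Me$ by measurable selection, splitting the mass of these types. Next I integrate the domination. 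For a formula $\phi(x,b)$, the type $\tp(b/M)$ together with stable data determines $\phi$ on each $p$, uniformly by compactness over a definable partition. So any extension of $\mu_0$ agrees with $\mu$ on $\phi(x,b)$. The technical core is to make the choice of these partitions measurable in $p$, using spherical completeness and the $\Gamma$-orthogonality. This is where I expect the real difficulty.
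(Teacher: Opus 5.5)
There are genuine gaps in $(2)\Rightarrow(3)$ and $(3)\Rightarrow(1)$, and $(1)\Rightarrow(2)$ is only sketched.

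For $(2)\Rightarrow(3)$ you rely on the claim that, in an NIP theory, the support of a generically stable measure consists of generically stable types. This is false. In a distal theory such as $\mathrm{RCF}$, Lebesgue measure on $[0,1]$ is generically stable, yet its support consists of nonrealized types, none of which is generically stable. The same happens in $\ACVF$. Take $\Gamma(M)=\mathbb R$, let $p_\gamma$ be the generic type of the closed ball $\{v(x)\geq\gamma\}$, and set $\mu(D):=\lambda(\{\gamma\in[0,1] : D\in p_\gamma\})$, with $\lambda$ Lebesgue measure. By \cref{thm:radon-barycentre-criterion}, applied to the image of $\lambda$ along the segment $\gamma\mapsto p_\gamma$, this $\mu$ is orthogonal to $\Gamma$ and generically stable. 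But $v_*\mu$ is atomless, so every global type in its support puts $v(x)$ in a nonrealized cut of $\Gamma(\M)$; hence it is not orthogonal to $\Gamma$, and so not generically stable.

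Your fallback is not an argument either. That $\mu(\phi(x,e))$ depends only on $\tp(e/M)$ is just invariance. Uniqueness of the $M$-invariant extension of $\mu\rest_M$ says nothing about the possibly non-invariant extensions to $Me$, which is what orthogonality concerns. More fundamentally, your sketch uses no property of $\Gamma$, and one is needed. The generic type of $\mathcal O$, viewed as a Dirac measure, is generically stable. Yet its restriction to $M$ has two extensions to $Me$ for $e\in\kres(\M)\setminus\kres(M)$, according to whether $\mathrm{res}(x)=e$. The missing ingredient is distality of $\Gamma$. Strong honest definitions give, in an elementary extension of pairs $(\U',M')$, formulas $\psi_0(x;d)\rightarrow\phi(x;e)\rightarrow\psi_1(x;d)$ with $d\in M'$ and with the same trace on $M$ as $\phi(x;e)$. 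Finite satisfiability then forces the three measures to coincide. Definability lets you replace $d$ by parameters from $M$, which gives $M$-definable inner and outer approximations with arbitrarily small gap.

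For $(3)\Rightarrow(1)$ the reduction to types fails. Orthogonality of $\mu$ only says that, for each \emph{single} formula $\phi(x;e)$, the types in $S_x(M)$ not deciding $\phi(x;e)$ form a $\mu\rest_M$-null set. Your union runs over all formulas with parameters from the uncountable model $M$ (and over all $e$), not over a countable family, and it need not be null. In the example above, $\mu\rest_M$ is concentrated on types whose valuation realizes a cut $\gamma^{\pm}$ of $\mathbb R$. If $e>0$ is infinitesimal, such a type leaves $v(x)<\gamma\pm e$ undecided. So almost no type over $M$ has a unique extension to $Me$, even for this one $e$, although $\mu\rest_M$ does. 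No measurable selection can therefore split mass, and your integration step, which you leave open anyway, cannot start.

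The paper never uses orthogonality of individual types. By \cref{fact:implication-from-residue-value-restrictions}, which is where spherical completeness enters, every global $M$-invariant type $q$, orthogonal or not, satisfies $q\rest_M\cup q^{st}\rest_{Mb}\cup q^{\Gamma}\rest_{Mb}\vdash q\rest_{Mb}$. Take each type in the support of $\mu\rest_{Mb}$ containing $\phi(x;b)$ and extend it to a global $M$-invariant type, which is possible since $\mu$ does not fork over $M$. It then has a neighbourhood $\psi(x)\wedge\alpha(f(x);d)\wedge\beta(g(x);e)$ implying $\phi(x;b)$, and compactness of the support leaves finitely many such neighbourhoods. Orthogonality of the \emph{measure} is used only to squeeze $\psi(x)\wedge\beta(g(x);e)$ between $M$-formulas of nearly equal measure. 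This yields inner approximations in $\DEFST_M(\U)$, and outer ones follow by applying the same argument to $\neg\phi$.

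Finally, for $(1)\Rightarrow(2)$, your commutation argument would need stable approximations of $\phi(x;b)$ chosen uniformly in $b$, and you do not supply them. The paper argues instead that stable approximation makes $\mu$ equal to its inner and outer $\DEFST_M$-measures. These inherit definability and finite satisfiability from the stable pushforwards.
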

 For the direction $(1)$ implies $(2)$ we show that if  the measure is stably dominated over $M$ then it is definable and finitely satisfiable, the measure inherits such properties from its pushforwards to the stable part of the structure. For the direction $(2)$ implies $(3)$, we use distal strong honest definitions for the value group, together with definability
and finite satisfiability of the measure, to obtain arbitrarily close
$M$-definable inner and outer approximations to formulas with
value-group parameters. The proof is close to \cite[Theorem $9.21$]{simon}.
For the direction $(3) \rightarrow (1)$ we use the fact that, over a spherically complete
base, invariant types are determined by their restrictions to the
base together with the types in residue-field and value-group given by taking the pro-definable pushforward of their definable closure (see \cite[Theorem 12.15]{HHM}). Orthogonality to the value group allows us to approximate the value-group
conditions by formulas over $M$. A compactness argument on the
support of the measure then yields inner and outer approximations
to arbitrary definable sets by elements of the stable Boolean
algebra. These approximations imply uniqueness of extension from
the stable part. We therefore obtain stable domination.\\

Using \cref{thm:introduction-main} we prove a 
measure-theoretic extension of the Hrushovski--Loeser correspondence:

\begin{theorem}
\label{thm:introduction-correspondence}
Let $M\models\ACVF$ be spherically complete, suppose that $\Gamma(M)=(\mathbb R,+,\leq,0),$
and let $X$ be a projective variety defined over $M$. There is a canonical bijection between Radon probability measures on $X^{\an}$ and global
$M$-invariant generically stable Keisler measures concentrating on
$X$.
\end{theorem}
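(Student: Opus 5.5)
The plan is to build the bijection through the Hrushovski--Loeser homeomorphism $\pi\colon \widehat{X}(M)\to X^{\an}$ of \cite{tamegeo}, together with the standard fact that a global $M$-invariant generically stable Keisler measure $\mu$ is determined by a regular Borel probability measure on its support $S(\mu)$ of global types. By \cref{thm:introduction-main}, such a $\mu$ is stably dominated, hence by the type-level theorem of \cite{HHM} every type in its support is $M$-invariant and generically stable. So $S(\mu)$ sits inside the space $\widehat{X}(M)$ of generically stable $M$-invariant types concentrating on $X$.

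\textbf{Forward map.} Given $\mu$, I would define a Borel measure $\nu_\mu$ on $X^{\an}$ by $\nu_\mu(U)=\mu\bigl(\{p : \pi(p)\in U\}\bigr)$, computed through the boundary/Borel regular extension of $\mu$ to the Stone space $S_X(\mathbb M)$. To make sense of this, I need the map $p\mapsto \pi(p)$ on $S(\mu)$ to be continuous, or at least Borel, from the Stone topology to the Berkovich topology. For this I would use the basic semialgebraic open sets $\{x : |f(x)|\in I\}$ of $X^{\an}$ with $f$ regular on an affine chart. Their preimage is the set of $p$ with $\mathrm{val}(f)_*p$ equal to a constant in $I$. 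Since $p$ is orthogonal to $\Gamma$, this is determined by the $M$-definable formula $\mathrm{val}(f(x))=\gamma$, so the preimage is clopen in $S(\mu)$. This gives continuity. Projectivity of $X$ makes $X^{\an}$ compact Hausdorff, so the pushforward of a regular measure on the compact space $S(\mu)$ is Radon.

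\textbf{Inverse map.} Given a Radon probability $\nu$ on $X^{\an}$, I would transport it to $\widehat{X}(M)$ via $\pi^{-1}$, then define for an $L(\mathbb M)$-formula $\phi(x)$
\[
\mu_\nu(\phi)=\int_{\widehat{X}(M)} \mathbf 1_{\phi\in p}\, d\nu(p).
\]
The key point is that $p\mapsto \mathbf 1_{\phi\in p}$ is Borel. This holds because for generically stable $M$-invariant $p$, the value $\mathbf 1_{\phi(x,b)\in p}$ depends definably on $p$ as an element of the strict pro-definable set $\widehat{X}$. Equivalently, $\{p:\phi\in p\}$ is a definable subset of $\widehat X$ relative to parameters $b$, and such sets are Borel in the Berkovich topology after applying $\pi$, being built from semialgebraic conditions. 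Finite additivity is immediate, and $M$-invariance follows since each $p$ is $M$-invariant. For generic stability, I would verify orthogonality to the value group and apply \cref{thm:introduction-main}. Since every $p$ in the support is orthogonal to $\Gamma$, for a tuple $e\in\Gamma(\mathbb M)$ the sets $\phi(x,e)$ relative to types in $\widehat X(M)$ coincide, type by type, with $M$-definable sets. The measure of each such set is then pinned down by $\mu_\nu\rest_M$, giving uniqueness of the extension to $Me$. The correspondence sends Dirac measures to the corresponding types by construction.

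\textbf{Mutual inverses and main obstacle.} That $\nu_{\mu_\nu}=\nu$ follows from agreement on basic semialgebraic opens and regularity. That $\mu_{\nu_\mu}=\mu$ follows since both integrate $\mathbf 1_{\phi\in p}$ against the same measure on $S(\mu)$. The main obstacle is measurability of $p\mapsto \mathbf 1_{\phi\in p}$ for arbitrary global parameters, i.e.\ relating Stone-space Borel structure on $S(\mu)$ to the Berkovich Borel structure. To handle it, I would first reduce via stable domination to formulas in the stable part. There the Berkovich structure is controlled by residue-field data over $M$, which is determined by $\mathrm{val}$ of finitely many regular functions.
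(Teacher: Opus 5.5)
\textbf{First gap: the support of $\mu$ does not lie in $\widehat X(M)$.} Your forward map rests on the claim that every type in the support $S(\mu)$ of a generically stable $\mu$ is itself generically stable, so that $S(\mu)\subseteq\widehat X(M)$. Neither \cref{thm:introduction-main} nor the type-level theorem you cite gives this, and it is false.

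Take $X=\mathbb P^1$ with affine coordinate $x$. Let $p_\gamma$ be the generic type of the closed ball $\{v(x)\geq\gamma\}$, and put $\mu(\phi):=\int_0^1\mathbf 1[\phi\in p_\gamma]\,d\gamma$. This is a global $M$-invariant extension of the measure that \cref{thm:radon-barycentre-criterion} attaches to Lebesgue measure on the segment $\{p_\gamma:\gamma\in[0,1]\}$, so it is the generically stable measure produced there. But $\mu(v(x)=\gamma_0)=0$ for every $\gamma_0\in\Gamma_\infty(\U)$, and $\mu(x=\infty)=0$. On the other hand, every $p\in\widehat X(M)$ contains one of these formulas with $\gamma_0\in\Gamma_\infty(M)$, by orthogonality to $\Gamma$. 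Hence $S(\mu)\cap\widehat X(M)=\varnothing$. In fact the Borel extension $\tilde\mu$ of $\mu$ to $S_x(\U)$ gives full mass to the closed set of types $q$ with $v_*q$ non-realized, just as Lebesgue measure in $\mathrm{RCF}$ lives on non-realized cuts.

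So $\nu_\mu:=\tilde\mu(\pi^{-1}(\cdot))$ is not defined, and ``$\mu_{\nu_\mu}=\mu$ because both integrate against the same measure on $S(\mu)$'' has no content. The correct way to attach Berkovich points to such types is through standard parts of the values $v(f(x))$. The real work in this direction is to show that this standard-part pushforward loses no mass and induces $\mu$ back, which fails without smoothness (compare the $0^+$ example before \cref{smooththenborel}). This is exactly what the paper supplies:
\begin{itemize}
\item the pushforwards of $\mu$ to the compact tropicalizations are smooth, by distality of $\Gamma$;
\item \cref{theo: radon and smooth in R}(2), together with \cref{rmk:uniquenessRadon}, turns them into a compatible system of unique Borel measures;
\item Prokhorov's theorem on Payne's inverse limit gives the Radon measure;
\item \cref{prop:uniquenessbypushforwards} identifies the induced Keisler measure with $\mu$.
\end{itemize}

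\textbf{Second gap: the orthogonality step in the inverse direction is a non sequitur.} Type by type you do get $\{p\in\widehat X(M):\phi(x,e)\in p\}=[D_e]_{\widehat X(M)}$ for an $M$-definable $D_e$, by \cref{lem:finite-gamma-normal-form} and definability of types over $\mathbb R$. So your $\mu_\nu$ satisfies $\mu_\nu(\phi(x,e))=\mu_\nu(D_e)$. But this only evaluates one particular extension. Orthogonality asks that \emph{every} extension of $\mu_\nu\rest_M$ to $Me$ agree on $\phi(x,e)$. By \cref{lem:unique-extension-criterion}, this means finding inner and outer $M$-definable approximations of arbitrarily close measure, and $D_e$ is in general neither inside nor outside $\phi(x,e)$.

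Already for $\phi(x,e)=(v(x)>e)$ with $e$ infinitesimally above $1/2$, one has $D_e=\{v(x)>1/2\}$. The inner measure is $\lim_n\mu_\nu(v(x)\geq 1/2+1/n)$ and the outer measure is $\mu_\nu(v(x)>1/2)$. So uniqueness amounts to $\mu_\nu(1/2<v(x)<1/2+1/n)\to 0$, which is exactly where $\sigma$-additivity of $\nu$ is needed. In general, this is where the paper uses that the pushforwards $\lambda_{\nu,g}$ to closed boxes of $\Gamma_\infty^r$ are countably additive, hence smooth by \cref{theo: radon and smooth in R}(1). Your orthogonality argument never invokes countable additivity or boundedness.

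\textbf{A smaller point: measurability.} Defining $\mu_\nu$ directly on formulas with global parameters creates the measurability problem you flag but do not resolve. The paper avoids it by defining the measure only on $\mathbb B_x(M)$, where quantifier elimination gives Borel sets (\cref{lem:borel-incidence-over-base}), and then taking the unique $M$-invariant extension.
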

The proof builds up on the correspodence of Borel measures in $(\R_{\infty},+, \leq, 0)$ and smooth measures over $(\R_{\infty},+,\leq,0)$ (\cref{theo: radon and smooth in R}) where $\R_{\infty}$ denotes $\R$ extended by a constant element $\infty$, declared to be above any other element in the value group. Such characterization builds on previous work for the real closed field see \cite[Proposition $5.5$]{SimonGenericallyStable} and \cite[Theorem $6.3$]{genmeasures}. More precisely, we show:
\begin{theorem}\label{theo: intro radon and smooth in R}
Let $M$ be an o-minimal expansion of $(\R_\infty, +, \leq, 0)$. 
\begin{enumerate}
\item If $V$ is an $\R_{\infty}$-definable set contained in a closed box, and $\mu$ a countably additive Keisler measure on $V$ over $\R_{\infty}$ (in the sense of \cite[Definition $6.1$]{genmeasures}), then $\mu$ is smooth. In particular, the Keisler measure on the $\R_{\infty}$-definable subsets of a box $B$ induced by a Borel measure on $B$ is smooth.
\item Conversely, if $\mu$ is a smooth measure over $(\R_\infty)^k$ concentrating in a closed box $B$, there is a $\sigma$-additive $\R_{\infty}$-Borel measure on $B(\R_\infty)$ such that $\mu_0$ induces $\mu$ as a Keisler measure.
\end{enumerate}
\end{theorem}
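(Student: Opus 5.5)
We prove the two items separately, in each case using o-minimality to reduce to a "cell-by-cell" analysis. Recall that a Keisler measure $\mu$ over $\R_\infty$ is smooth when it has a unique extension to the monster model. Equivalently, for every formula $\phi(x,y)$ and every $\epsilon>0$, there are finitely many $\R_\infty$-definable sets $U_i\subseteq V_i$ with the following properties. First, $\sum_i\mu(V_i\setminus U_i)<\epsilon$. Second, for every global parameter $b$, each $V_i$ with $\phi(x,b)\cap U_i\neq\emptyset$ or $\phi(x,b)\cap V_i\neq\emptyset$ is covered in the right way, so that $\phi(x,b)$ is sandwiched between unions of the $U_i$ and unions of the $V_i$. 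We aim to verify this criterion directly from countable additivity.

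For item (1), let $\mu$ be countably additive on $V\subseteq B$. Fix $\phi(x,y)$ and apply uniform cell decomposition: there are finitely many definable families of cells $C_j(x,z)$ such that every fibre $\phi(x,b)$ is a finite union of cells $C_j(x,c)$, with $c$ a tuple of "boundary" parameters. Over $\R_\infty$, a nonstandard $c$ determines a standard part $\mathrm{st}(c)\in\R_\infty$, since $B$ is a closed box, hence compact. The type of $b$ over $\R_\infty$ is then governed by how the boundaries of the cells sit infinitesimally close to standard boundaries. The key step is to show that the ambiguity set — the points of $V(\R_\infty)$ lying on the standard limits of the boundaries, e.g. graphs of the limit functions $\mathrm{st}(f(\cdot,c))$ — can be handled by the measure. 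Countable additivity, via continuity from above, shows that thin neighbourhoods of a definable boundary have measure tending to the measure of the boundary itself. Applying this to the definable sets $\{x: |f(x)-g(x)|<1/n\}$, we get approximating $\R_\infty$-definable sets $U\subseteq\phi(x,b)\subseteq V'$ with $\mu(V'\setminus U)<\epsilon$.

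There is a complication when the boundary itself has positive measure. Here the infinitesimal shift of $c$ relative to $\mathrm{st}(c)$ decides containment, and we refine using the cut data of $c$ (e.g. $c=\mathrm{st}(c)^{\pm}$). Since this data is determined by the type of $b$ over $\R_\infty$, it is controlled by the measure of sets of the form $\{x: f(x)<a\}$ versus $\{x: f(x)\le a\}$. Smoothness then follows because every extension of $\mu$ is forced between these approximations. The "in particular" clause is immediate, since a Borel measure restricted to definable sets (which are Borel) is countably additive.

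For item (2), let $\mu$ be smooth concentrating on $B$. Definable subsets of $B(\R_\infty)$ form a Boolean algebra $\mathcal A$ generating the Borel $\sigma$-algebra, since it contains the open boxes with rational corners. By Carathéodory's extension theorem, it suffices to show that $\mu\rest\mathcal A$ is countably additive, and here smoothness is essential. The approach is to realize $\mu$ as a pushforward: smoothness gives a unique global extension, which is definable and finitely satisfiable. Consider the standard-part map $\mathrm{st}:B(\M)\to B(\R_\infty)$ on types. We then define $\mu_0(A)=\tilde\mu(\{p:\mathrm{st}(p)\in A\})$, where $\tilde\mu$ is the regular Borel measure on the type space $S_B(\M)$, or on $S_B(\R_\infty)$, and the set $\{p : \mathrm{st}(p)\in A\}$ is Borel because $\mathrm{st}$ is continuous from the type space to the compact box. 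This yields a $\sigma$-additive Borel measure automatically.

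The main obstacle is checking that $\mu_0(D)=\mu(D)$ for definable $D$. The two can differ only on types infinitesimally close to the frontier $\partial D$ but not in $D$, or conversely. Smoothness is used to show this error set is null: for $\epsilon>0$, the sandwich $U\subseteq D\subseteq V'$ from smoothness, applied to the formula "$x$ is within $\delta$ of $D$" with a nonstandard infinitesimal $\delta$, gives $\mu(\text{infinitesimal neighbourhood of }\partial D\setminus\partial D)<\epsilon$. I expect this step to be the delicate one, particularly at the point $\infty$ and on lower-dimensional cells.
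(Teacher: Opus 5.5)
\textbf{Item (1).} This part has a genuine gap, exactly at the point you call a complication. Suppose the standard limit $Z$ of part of the boundary of $\phi(x,b)$ carries positive $\mu$-mass. Then $\mu'(\phi(x,b))$ depends on $\mu'(\phi(x,b)\cap Z)$, and $\phi(x,b)\cap Z$ is an arbitrary $\M$-definable subset of the lower-dimensional set $Z$. For instance, take $Z$ a plane in a box of $\R_\infty^3$ carrying area measure, and $\phi(x,b)=\{z\geq g(x,y)\}$ with $g$ infinitesimal but changing sign across a nonstandard curve. No single cut $\mathrm{st}(c)^{\pm}$ of the parameters describes this trace; computing its measure is the same problem one dimension down.

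The missing idea is induction on the o-minimal dimension. The restriction of $\mu$ to $Z$ is still countably additive, hence smooth by the induction hypothesis, and that pins down $\mu'$ on $\phi(x,b)\cap Z$.

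You are also missing the tool that makes the standard limits usable. The sets you feed to $\mu$ must be $\R_\infty$-definable, but $\{x:|f(x)-g(x)|<1/n\}$ involves the nonstandard boundary. Moreover, the standard part of a nonstandard cell boundary need not even be the graph of a function. The paper instead uses Marker--Steinhorn to know that $\mathrm{st}(D(\M))$ and $\mathrm{st}(D^c(\M))$ are closed $\R_\infty$-definable sets. It splits $V$ as $D_0\sqcup D_1\sqcup Y$ and shows that $D_2=Y\cup(\overline{D_0}\cap\overline{D_1})$ has lower dimension, so it is handled by induction. It then gets $\mu'(D\setminus D_2)$ from $D\setminus U_{<1/n}(D_2)=D_1\setminus U_{<1/n}(D_2)$ together with countable additivity. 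Your opening smoothness criterion is also misstated. What you actually use is the inner/outer criterion of Lemma~\ref{lem:unique-extension-criterion} with $\mathbb F=\mathbb B_x(\R_\infty)$, which is fine.

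\textbf{Item (2).} Your strategy is the paper's: push $\mu$ forward along $\mathrm{st}$ and show the defect is null using a distance function and smoothness. But the steps you leave as \emph{delicate} are where the proof actually lives.

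First, the defect is not contained in the infinitesimal neighbourhood of $\partial D$ minus $\partial D$. For $D=\{y>0\}\cup\{y=0,\,x>0\}$, the type of $(\epsilon,0)$ with $\epsilon\in 0^+$ lies in $D\cap\partial D$ but has standard part $(0,0)\notin D$. What your $\delta$-neighbourhood argument really yields is the closed case, namely that $\mathrm{st}^{-1}(\overline D)\setminus\overline D$ is null. You then still need the paper's induction on dimension via $\dim(\overline D\setminus D)<\dim D$.

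Second, with only $+$ and $\leq$ there is no distance to points having a coordinate equal to $\infty$, so \emph{within $\delta$ of $D$} makes no sense there. The paper handles this in two steps. It shows that, for each coordinate projection $\pi$, the points whose $\pi$-coordinate realizes the unrealized type $\infty^-$ form a $\mu$-null set: the pushforward is smooth, so Fact~\ref{fact:smoothmeasurezero} applies. It then stratifies $B$ into the pieces $C_J$ on which a fixed set $J$ of coordinates equals $\infty$, and uses the distance $d_{X_J}$ in the finite coordinates only.

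Finally, a single infinitesimal $\delta$ controls only $\{0<d\leq\delta\}$, not the type-definable set $d^{-1}(0^+)$. You need either the o-minimal remark that any $\R_\infty$-definable set containing $\{0<d\leq\delta\}$ contains some $\{0<d<1/n\}$, or, as in the paper, the fact that $d_*\mu$ is smooth and therefore gives the unrealized type $0^+$ measure zero.
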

Now we can sketch briefly the key components on the proof of \cref{thm:introduction-correspondence}. Given a generically stable measure $\mu$ concentrating on $X$ we use Payne's tropicalization theorem (\emph{c.f} \cite[Theorem $4.2$]{Payne09}) to construct a Radon measure on $X^{an}$ from the pushforwards of $\mu$ in the projective tropicalized spaces. Each such pushforward is smooth since the value group is stably embedded and distal, and \cref{theo: intro radon and smooth in R}.(2) allows us to extend it to a Borel measure on the tropicalized space, which is compact and Hausdorff. We therefore obtain a compatible system of Borel measures on the tropicalized spaces, and we then lift it to a measure on the inverse limit of the tropicalized spaces by Prokhorov's theorem \cite[Theorem $5.5.1$]{burba9}. The converse direction follows as an application of \cref{thm:introduction-main}, we start with a Radon measure on $X^{an}$, form which we construct a Keisler measure centered in $X$ and we show that it is generically stable arguing that it is orthogonal to the value group. To argue that such measure is orthogonal to the value group, we use the stable embeddedness of $\Gamma$ and use that each of the pushforwards of Radon measure to the value group must be smooth by \cref{theo: intro radon and smooth in R}.(1).\\
An important source of examples is provided by the
non-Archimedean Monge--Amp\'ere measures introduced by
Chambert--Loir; see
\cite{ChambertLoir06,ChambertLoir11}. Let $X$ be a projective variety
of dimension $d$, let $\mathscr X$ be a normal projective model of
$X$ over the valuation ring of $M$, and write its scheme-theoretic
special fiber as $\mathscr X_{\mathbf k}=\sum_i m_iY_i.$ Each irreducible component $Y_i$ determines a divisorial point
$\xi_i\in X^{\an}$. Suppose that model metrics on line bundles
$L_1,\ldots,L_d$ are induced by line bundles
$\mathscr L_1,\ldots,\mathscr L_d$ on $\mathscr X$, with respective
exponents $e_1,\ldots,e_d$. The associated Chambert--Loir measure is
the finite atomic measure

$c_1(\overline L_1)\cdots c_1(\overline L_d)\delta_X
=
\frac{1}{e_1\cdots e_d}
\sum_i
m_i
\bigl(
c_1(\mathscr L_1)\cdots c_1(\mathscr L_d)
\mid Y_i
\bigr)
\delta_{\xi_i}.$

Thus the special fiber determines the divisorial points supporting the
measure, while intersection theory determines their weights. After
normalizing by its total mass, this measure is a Radon probability measure. In particular, model measures correspond to finite
weighted averages of the generically stable types represented by the
divisorial points $\xi_i$, and therefore they are generically stable. 
It is more challenging to see that the Monge-Amp\'ere measures that arise as weak limits of the previous model measures are also generically stable. 
Indeed, Chambert--Loir measures associated with continuous
semipositive metrics are obtained as weak limits of finite atomic
model measures supported on divisorial points. It is in general false that weak limits of generically stable measures are generically stable, however weak limits of Radon measures are Radon measures.  Our correspondence
therefore assigns a generically stable Keisler measure to every such
normalized non-Archimedean Monge-Amp\'ere measure, which are obtained as weak limits of the model Chamber Loir measures.\\

This paper is organized as follows:
\begin{enumerate}
\item Section $2$: Preliminaries. 
\item Section $3$: We
introduce the notions of stable approximation, stable domination, and orthogonality
to a stably embedded sort for Keisler measures (\emph{c.f.} \cref{def:stable-approximation-and-domination},\cref{def:measure-orthogonal-gamma}). The main result of this
section is the characterization theorem stated in
\cref{thm:introduction-main}.
\item  Section $4$: We study the correspodance  borel measures in and smooth measures on $\R_{\infty}$ and prove \cref{theo: intro radon and smooth in R}. 
\item Section $5$:  We establish the correspondence in
\cref{thm:introduction-correspondence}. 
\end{enumerate}

\subsection*{Acknowledgments}
 The second author would like to thank Martin Hils for insightful conversations on the subject and sharing some relevant literature. She would also like to thank Kyle Gannon and Artem Chernikov for mentioning to her the first question that we answered in \cref{thm:introduction-main}.

 \subsection*{AI disclosure}
Most of this paper and its ideas are due to the two humans that wrote it. Chat GPT Sol $6$ was used to edit and format the paper as well as proof read it. We indicate in a detail way its contributions: \\

After we (humans) proved that a Radon measure on the stable completion $\widehat{X}(M)$ induces a generically stable measure centered on $X$ (\emph{c.f.} \cref{thm:radon-barycentre-criterion}),
we had an outline for the proof of a possible converse (\cref{prop:projectivecase}) using Payne's Theorem together with the fact, which we believed known, that any Radon measure induced generically stable measures over compact subsets of the reals. When asking Chat GPT Sol $5.6$ about whether the statement of constructing a Borel measure from a smooth measure on a compact set in the real closed field setting was a known fact, it pointed out the reference of \cite[Proposition $5.5$]{SimonGenericallyStable}.

While proof reading Chat GPT Sol $6$ corrected some of our arguments which we record below:
\begin{itemize}
\item Claim $4.6$ in Theorem $4.3$: the paragraph right after the definition of the distance function is an argument that build in our proof, but Chat GPT Sol $6$ corrected it explaining that we simply needed $st^{-1}(X_{J}) \cap C_{J} \setminus X_{J} \subseteq d_{X_{J}}^{-1}(0^{+})$.
\item Pointed out the need of Fact $2.18$ and its reference to argue that Borel measures are unique in Proposition $5.20$.
\end{itemize}

Lastly, Chat GPT Sol $6$ was used to fix the enumeration of the present paper as well making the references in the bibliography uniform.

\section{Preliminaries}
\subsection{Model-theoretic notions}

Throughout this subsection, \(T\) is a complete first-order \(\mathcal L\)-theory and \(\U\models T\) is its monster model. We write \(A\) for a small set of parameters and \(M\) for a small elementary submodel of \(\U\).

\begin{definition}\label{def:distal-type}
Let \(\A\subseteq \U\) be small.
\begin{enumerate}
    \item Let \(p(x),q(y)\in S(\A)\). We say that \(p\) and \(q\) are \emph{weakly orthogonal} if
    $ p(x)\cup q(y)$
    determines a complete type over \(\A\).

    \item Let \(p(x)\) and \(q(y)\) be global \(M\)-invariant types. We say that \(p\) and \(q\) are \emph{orthogonal} if they are weakly orthogonal as global types. 
   \item If \(p\) is a global \(\A\)-invariant type. We say that \(p\) is \emph{distal over \(\A\)} if, for every tuple \(d\) and every Morley sequence
$I\models p^{\otimes \omega}\rest_{\A d}$
the types $
p\rest_{\A I}$ and $
\tp(d/\A I)$
are weakly orthogonal.
\end{enumerate}
\end{definition}

\begin{definition}\label{def:stable-set-stably-embedded}
Let \(B\subseteq \U\) be definable.  
\begin{enumerate}
 \item A formula \(\phi(\bar x;\bar y)\) is \emph{stable}, if there are no sequences
    $(a_i,b_i)_{i\in \mathbb N}$
    with $a_i,b_i$ tuples in $\U$ such that
    $\U\models \phi(a_i;b_j)$ if and only if $i<j.$ The formula $\phi$ might have parameters.
  \item We say that \(B\) is \emph{stable in \(\U\)} if, for every formula \(\phi(x;y)\), there are no sequences
    $(a_i,b_i)_{i\in \mathbb N}$
    with \(a_i,b_i\in B\) such that
    $\U\models \phi(a_i;b_j)$ if and only if $i<j.$ 
    \item We say that \(B\) is \emph{stably embedded in \(\U\)} if for every formula \(\phi(x_1,\dots,x_n;y)\), there is a formula \(\psi(x_1,\dots,x_n;z)\), depending only on \(\phi\), such that for every \(b\in \U^y\), there is \(d\in B^z\) satisfying
    $ \phi(B^n,b)=\psi(B^n,d).$
\end{enumerate}
\end{definition}

Notice that a formula $\theta(x)$ which is stable as a partial type, defines a stable set, but that the converse does not hold. 

\begin{fact}\cite[Proposition 1,5]{OnPe}\label{OnPe}
    $\theta(x)$ is stable as a partial type if and only if the definable set $\theta(\U)$ is stable and stably embedded.
\end{fact}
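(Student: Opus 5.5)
The plan is to prove the two directions separately. Throughout, write $D=\theta(\U)$. I read ``$\theta(x)$ is stable as a partial type'' as: for every formula $\phi(x;y)$ there are no sequences $(a_i,b_i)_{i\in\N}$ with $a_i\in D$ and $b_i$ \emph{arbitrary} in $\U$ such that $\U\models\phi(a_i;b_j)$ iff $i<j$.

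\textbf{($\Leftarrow$).} Assume $D$ is stable and stably embedded, and suppose towards a contradiction that $\phi(x;y)$, $a_i\in D$ and $b_j\in\U$ witness the order property. Stable embeddedness gives a formula $\psi(x;z)$, depending only on $\phi$, and for each $j$ some $d_j\in D^z$ with $\phi(D,b_j)=\psi(D,d_j)$. Since every $a_i$ lies in $D$, we get $\U\models\psi(a_i;d_j)$ iff $i<j$. Now all the $a_i$ and $d_j$ lie in $D$, so this contradicts \cref{def:stable-set-stably-embedded}(2). The uniformity of $\psi$ in the definition of stable embeddedness is exactly what makes this work.

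\textbf{($\Rightarrow$).} Stability of $D$ is the special case of the hypothesis in which the $b_j$ are taken in $D$. The real work is stable embeddedness. Fix $\phi(x;y)$ with $x$ ranging over $D$, and consider the formula $\phi^*(y;x):=\phi(x;y)\wedge\theta(x)$ with the variables swapped. By hypothesis it is stable, since the order property fails in both orders of variables.

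For each $b\in\U$, I would show that the trace $\phi(D,b)$ is definable with parameters from $D$, uniformly in $b$. The tool is Shelah's local $2$-rank $R=R_{\phi^*,2}$ on partial types in the variable $y$. Because $\phi^*$ is stable, $R$ is finite, bounded by some $n_\phi$, on every consistent formula.

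\begin{enumerate}
\item Among the finite conjunctions $\chi(y)$ of instances $\phi^*(y;c)^{\pm}$ with $c\in D$ that are satisfied by $b$, choose one of minimal rank $R(\chi)=k$.
\item By minimality, for $c\in D$ we have $\U\models\phi(c;b)$ iff $R(\chi(y)\wedge\phi^*(y;c))=k$. Otherwise $\chi\wedge\neg\phi^*(y;c)$ would have rank $k$, and both halves having rank $k$ would push $R(\chi)$ to at least $k+1$.
\item For stable $\phi^*$, the condition ``$R(\chi(y)\wedge\phi^*(y;c))\geq k$'' is a definable condition on the parameters of $\chi$ and on $c$. Hence $\phi(D,b)=\psi_{\chi,k}(D,\bar d)$, where $\bar d\in D$ are the parameters of $\chi$.
\end{enumerate}

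Since the rank is bounded and the number of conjuncts in $\chi$ can be bounded in terms of $n_\phi$, only finitely many shapes $(\chi,k)$ occur. A standard coding trick, using a few extra parameters from $D$ or a disjunction over cases, merges them into a single $\psi(x;z)$ that depends only on $\phi$. This gives stable embeddedness.

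The main obstacle is step (3) together with the uniformity. One must verify that, for a stable formula, ``$R_{\phi^*,2}(\cdot)\geq k$'' is definable in the parameters and that the conjunction length is bounded. I would import this from local stability theory (Pillay's \emph{Geometric Stability Theory}, Ch. 1). If that is awkward, an alternative is to show that the $\phi^*$-type of $b$ over $D$ is definable by a positive Boolean combination of instances $\phi(c_k;y)$, and to use a compactness argument to bound its complexity.
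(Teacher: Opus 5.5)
Your $(\Leftarrow)$ direction is correct, and so are steps (1)--(3) of $(\Rightarrow)$. Together they show that each trace $\phi(D,b)$ is definable with parameters from $D$.

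Step (3) is not actually delicate. The condition $R_{\phi^*,2}(\chi(y;\bar d)\wedge\phi^*(y;c))\geq k$ says there is a binary tree of height $k$ of parameters from $D$ all of whose branches are consistent with $\chi(y;\bar d)\wedge\phi^*(y;c)$. That is first-order in $(\bar d,c)$ for any formula; stability is needed only to make the rank finite.

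\textbf{The gap is in the uniformity step.} It is false that a minimal-rank $\chi$ can be chosen with a number of conjuncts bounded in terms of $n_\phi$, or even in terms of $T$ and $\phi$. Here is a counterexample.
\begin{itemize}
\item Let $E$ be an equivalence relation whose classes $C_N$ ($N\geq 1$) have size $N+1$, each containing exactly one point $s_N$ of a unary predicate $S$.
\item Let $\theta(x)$ be $x=x$, and let $\phi(x;y):=(S(x)\wedge xEy)\vee(\neg S(x)\wedge x=y)$.
\item The instances $\phi^*(y;c)$ are then the $E$-classes and the singletons of non-$S$ points, and $R_{\phi^*,2}(y=y)=2$.
\item Any nonempty set containing $s_N$ that is split by no instance must equal $\{s_N\}$. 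Otherwise $C_N$, or $\{c\}$ for some $c\in C_N\setminus\{s_N\}$, splits it.
\item Rank $0$ is attained by $\phi^*(y;s_N)\wedge\bigwedge_{c\in C_N\setminus\{s_N\}}\neg\phi^*(y;c)$. Hence every minimal-rank conjunction true of $b=s_N$ has at least $N$ conjuncts of the form $y\neq c$.
\end{itemize}
So infinitely many shapes $(\chi,k)$ genuinely occur, and the argument from finitely many shapes does not go through.

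\textbf{The repair is to apply compactness to the definitions rather than to $\chi$.} Every $b$ admits some formula $\psi_b(x;\bar z)$ and some $\bar d\in D$ with $\phi(D,b)=\psi_b(D,\bar d)$. Consider the partial type in $y$ containing, for each $\psi$, the formula
$\neg\exists\bar z\,\bigl(\bigwedge_i\theta(z_i)\wedge\forall x\,(\theta(x)\rightarrow(\phi(x;y)\leftrightarrow\psi(x;\bar z)))\bigr)$.
It is over the small set of parameters of $\phi$ and $\theta$, and no $b\in\U$ realizes it. By saturation it is inconsistent, so finitely many $\psi_1,\dots,\psi_m$ serve for all $b$. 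These can then be merged into a single $\psi$ by coding the index with two distinct elements of $D$.

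This merging needs $|D|\geq 2$. For $D=\{e\}$ and $\phi(x;y)$ the formula $x=y$, no single $\psi$ as in \cref{def:stable-set-stably-embedded}(3) exists. In that degenerate case the definition has to be read as allowing finitely many $\psi$'s.

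Your fallback via positive Boolean combinations would not help. Definitions of that shape for $\tp_{\phi^*}(b/D)$ are combinations of formulas $\phi(x;b')$ with $b'$ on the $y$-side of a model containing $D$. So they do not supply parameters from $D$.

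The paper gives no argument of its own here, only the citation to \cite[Proposition~1.5]{OnPe}. With the compactness repair, your proof is the standard one.
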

We now recall several notions of the stable part of a structure.  
\begin{definition}\cite[Definition~3.1]{HHM}.\label{def: stable part of the structure} Let $\A=\acl^{eq}(\A)$ be a small set of imaginary parameters. We denote by $\St_{\A}$  the multi-sorted structure $(\mathrm{D}_{i}, \mathrm{R}_{j})_{i \in \I, j \in \mathrm{J}}$ whose sorts $\mathrm{D}_{i}$ are the $\A$-definable, stable, stably embedded subsets of $\M$. For each finite set of sorts $\mathrm{D}_i$, all the $\A$-definable relations on their finite product are included as $\emptyset$-definable relations $\mathrm{R}_{j}$. 
\end{definition}
The following is \cite[Lemma $3.2$]{HHM}. 
\begin{fact}\label{fact: stablepartproperties} The structure $\St_{\A}$ is stable, stably embedded over $\A$ and eliminates imaginaries. 
\end{fact}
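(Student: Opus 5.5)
We need to prove the fact as stated: $\St_{\A}$ is stable, stably embedded over $\A$, and eliminates imaginaries. The plan is to treat the three properties separately, using \cref{OnPe} throughout to move between "stable and stably embedded set" and "stable partial type". The standing hypothesis $\A=\acl^{eq}(\A)$ is used only in the last step.

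\textbf{Stability.} A formula of $\St_{\A}$ lives on a finite product $D_1\times\dots\times D_n$ of sorts. Each $D_i$ is defined by a formula $\theta_i$ that is stable as a partial type, by \cref{OnPe}. So it suffices to show the following: if $\theta_1,\dots,\theta_n$ are stable partial types, then so is $\theta_1(x_1)\wedge\dots\wedge\theta_n(x_n)$. I would prove this by induction on $n$, via Ramsey and compactness.
\begin{itemize}
\item Suppose $\phi(x_1x_2;y)$ has the order property witnessed by $a_ib_i$, with $a_i\models\theta_1$ and $b_i\models\theta_2$.
\item Stable embeddedness of $\theta_2(\U)$ lets us replace the parameters $c_jb_j'$ on the $\theta_2$-side by parameters from $\theta_2(\U)$.
\item Standard manipulation (e.g.\ via the characterization that a stable partial type has all types over it definable) shows that the product is again stable and stably embedded.
\end{itemize}
Alternatively, one can cite directly that finite products of stable, stably embedded definable sets are stable and stably embedded. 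I expect this to be the routine core.

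\textbf{Stable embeddedness over $\A$.} Take a set definable with parameters $b\in\U$ on a product of sorts. It equals $\psi(D^n,d)$ for some $d$ in a product of sorts, by stable embeddedness of that product. To make the defining formula part of the $\St_{\A}$ language over $d$, note that $\psi$ may carry parameters from $\A$. Since $\A$-definable relations on products of sorts are named as $\emptyset$-definable relations, $\psi(x,z)$ restricted to the product is a basic relation. Hence every $\U$-definable subset of a product of sorts is $\St_{\A}$-definable with parameters from $\St_{\A}$.

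\textbf{Elimination of imaginaries.} This is the main obstacle. Given an $\St_{\A}$-definable equivalence relation $E$ on a product $D$ and a class $e=a/E$, I would use $\acl^{eq}(\A)=\A$. The quotient $D/E$ is an $\A$-definable set in $\U^{eq}$. It is stable, being interpretable in a stable set, and stably embedded, since stable embeddedness passes to quotients by $\A$-definable equivalence relations via the stable embeddedness of $D$. It therefore qualifies as a sort $D'$ of $\St_{\A}$ (working in $\M^{eq}$ as \cite{HHM} do). The quotient map $D\to D'$ is $\A$-definable, hence a basic relation. So $e$ is coded by an element of the sort $D'$, which gives elimination of imaginaries.

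The delicate point is making sure the sorts are taken in $\M^{eq}$, so that quotients are available. If instead sorts are only subsets of the home sort, I would code $e$ by the canonical parameter of the definable set $a/E$, using stable embeddedness to obtain a code inside the stable part. The role of $\acl^{eq}(\A)=\A$ is to ensure that finite sets of such codes are again coded by $\A$-definable objects.
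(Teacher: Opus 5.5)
Your overall plan is the argument behind \cite[Lemma~3.2]{HHM}, which is all the paper offers here; it gives no proof of its own, only that citation. Your stability and stable-embeddedness parts are fine. The stability paragraph is muddled, but citing the product lemma suffices, as does checking via \cref{OnPe} and a count of types that $\theta_1(x_1)\wedge\theta_2(x_2)$ is a stable partial type.

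The gap is in the elimination-of-imaginaries step. There you claim $D/E$ is stably embedded because stable embeddedness passes to quotients via that of $D$. Pulling a definable $Y\subseteq(D/E)^n$ back to $D^n$ only shows that $Y$ is definable with parameters from $D$, not from $D/E$, and in general the conclusion is false. For example, take a dense linear order with a named dense and codense subset $P$. The universe is trivially stably embedded. Yet the quotient collapsing the complement of $P$ to a point is essentially $P\cup\{*\}$, and for $q\notin P$ the cut $\{p\in P: p<q\}$ is not definable with parameters from $P$.

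What is needed is stability, and \cref{OnPe} gives it at once. Suppose a formula $\phi(u;y)$ with $u$ ranging over $D/E$ had the order property. Choosing representatives in $D$ would give the order property for $\phi(x/E;y)$ with $x$ ranging over $D$, contradicting that $D$ is stable as a partial type. So $D/E$ is a stable partial type, hence stable and stably embedded, hence a sort of $\St_A$. Take $E$ to be $A$-definable, i.e.\ $\emptyset$-definable in $\St_A$, so that $D/E$ is $A$-definable; parameters are absorbed into the variables as usual.

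Two smaller points. First, your fallback for sorts that are mere subsets of the home sort does not work, because then the statement is false. Take an equivalence relation with infinitely many infinite classes and $A=\acl^{eq}(\emptyset)$: the home sort is a sort of $\St_A$, but no real tuple is interdefinable over $A$ with a class. So working in $\M^{eq}$ is essential rather than a convenience; alternatively one needs elimination of imaginaries in $T$, as in $\mathrm{ACVF}$ with the geometric sorts. Second, the hypothesis $A=\acl^{eq}(A)$ is not used anywhere in this fact. It does not help code finite sets, and it only matters later, for instance to make types over $A$ in $\St_A$ stationary.
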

\begin{notation}\label{not:notationstabletrace}
For every parameter set $B$, we put $\St_\A(B)
:=
\St_\A \cap\dcl^{eq}(A B).$\end{notation}
For a small model $M$, formulas with parameters in $\St_M(B)$ will play an important role in this paper. 


\subsection{Keisler measures}

Throughout this subsection, \(T\) is a complete first-order \(\mathcal L\)-theory and \(\U\models T\) is a sufficiently saturated monster model.

\begin{definition}
Let \(A\subseteq \U\) be a small set of parameters. A \emph{Keisler measure over \(A\)} in the variable \(x\) is a finitely additive probability measure on $\mathbb{B}(\A)$: the Boolean algebra of \(\mathcal L_x(A)\)-definable subsets of \(\U^x\). A \emph{global Keisler measure} is a Keisler measure over \(\U\).
\end{definition}

\begin{definition}\label{def:smooth}
Let \(M\models T\), and let \(\mu\) be a Keisler measure over \(M\).
\begin{enumerate}
    \item We say that \(\mu\) is \emph{smooth} if it has a unique extension to a Keisler measure over every model \(N\supseteq M\).
    \item A global Keisler measure \(\mu\) is said to be \emph{smooth over \(M\)} if \(\mu\rest_{\mathcal L_x(M)}\) is smooth.
\end{enumerate}
\end{definition}

In order to use measure theoretic results one needs to extend the additive Keisler measure on the Boolean algebra  of definable sets $\mathbb B_x(A)$ to $\sigma$-additive measures. There are two equivalent ways of doing this. In \cite{Keisler} Keisler identifies a formula with its realizations in the saturated model $\U$ and he proves in \cite[Theorem 1.2]{Keisler} that under this identification one can extend any Keisler measure to a countably additive measure on the $\sigma$-algebra generated by the definable sets in $\U^{|x|}$. The more modern (equivalent) approach is to work in the space of types $S_x(A)$ (or $S_x(\U)$ if we want to vary the parameter set $A$), identify any formula with the set of types containing it, and again extending any Keisler measure to the $\sigma$-algebra generated by definable sets $\mathbb B_x(A)$ in the space $S_x(A)$ (or $S_x(\U)$, respectively). This paper owes a lot to Keisler's ideas and because of this we will mention Keisler's approach. However, some of the definitions and results we use are from more recent articles (such as \emph{Borel definability} in the following definition), which are based in the type-space view.   

\begin{definition}\label{def:invariant-definable-fs}
Let \(\A\subseteq \mathrm{B}\subseteq \U\) be small sets of parameters. Let \(\mu\) be a Keisler measure over $\mathrm{B}$.
\begin{enumerate}
    \item We say that \(\mu\) is \emph{\(\A\)-invariant} if, for every formula \(\phi(x,y)\) and all tuples \(b,b'\in \mathrm{B}^y\), whenever
    $b\equiv_{\A} b',$
    we have
    $\mu(\phi(x,b))=\mu(\phi(x,b')).$

    \item We say that \(\mu\) is \emph{finitely satisfiable in \(\A\)} if, for every formula \(\phi(x,y)\) and every \(b\in \mathrm{B}^y\), whenever
    $ \mu(\phi(x,b))>0,$
    there is some \(a\in \A^x\) such that
    $ \U\models \phi(a,b).$

    \item We say that $\mu$ is \emph{nonforking over $\A$} if for any formula \(\phi(x,y)\) and every \(b\in \mathrm{B}^y\) if $\phi(x,b)$ forks over $\A$ then $\mu(\phi(x,b))=0$.

    \item We say that \(\mu\) is \emph{definable over \(\A\)} if \(\mu\) is \(\A\)-invariant and for every formula \(\phi(x,y)\) and every \(r\in [0,1]\), the set $\{q\in S_y(\A) \ | \  \mu(\phi(x,b))<r \text{ for any $b\in \mathrm{B}$ with $b\models q$}\}$ is open in \(S_y(\A)\).

\item We say that \(\mu\) is \emph{Borel definable over \(\A\)} if \(\mu\) is \(\A\)-invariant and for every formula \(\phi(x,y)\) and every \(r\in [0,1]\), the set
\[\{q\in S_y(\A) \ | \  \mu(\phi(x,b))<r \text{ for any $b\in \mathrm{B}$ with $b\models q$}\}, \ \text{is Borel in \(S_y(\A)\)}.\]

\item We write $\mu\rest_{\A}$ to denote the restriction of $\mu$ to $\mathbb{B}(\A)$: the boolean algebra of $\A$-definable subsets. We say that $\mu \rest_{\A}$ is \emph{stationary} if it has a unique non-forking global extension.  
\end{enumerate}
\end{definition}

\begin{fact}
\label{fact:support-keisler-measure}
Let $\A$ be a small set of parameters. Let $\mu$ be a Keisler measure over
$\A$. Define
\[
\operatorname{Supp}(\mu)
:=
\left\{
p\in S_x(\A) \ | \ 
\mu(\theta(x))>0
\text{ for every }\theta(x)\in p
\right\}.\]
Then $\operatorname{Supp}(\mu)$ is a closed, and hence compact,
subset of $S_x(\A)$. Moreover, for every formula $\theta(x)\in\mathcal L_x(\A)$,
$\mu(\theta(x))>0$ if and only if $[\theta(x)]\cap\operatorname{Supp}(\mu)\neq\varnothing.$
\end{fact}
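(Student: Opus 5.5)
The plan is to identify the Keisler measure $\mu$ on the Boolean algebra $\mathbb B_x(\A)$ with a finitely additive measure on the clopen sets $[\theta]$ of the Stone space $S_x(\A)$, and then use compactness of the type space. Set
\[
\operatorname{Supp}(\mu)=\bigcap\{\,S_x(\A)\setminus[\theta] \ | \ \theta\in\mathcal L_x(\A),\ \mu(\theta)=0\,\}.
\]
This equals the set in the statement: a type $p$ lies outside the displayed set exactly when it contains some $\theta$ with $\mu(\theta)=0$, i.e. when $p\in[\theta]$ for such a $\theta$. As an intersection of complements of clopen sets it is closed. Since $S_x(\A)$ is compact Hausdorff, $\operatorname{Supp}(\mu)$ is compact.

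For the "moreover" part, first suppose $[\theta]\cap\operatorname{Supp}(\mu)\neq\varnothing$. Choose $p$ in this intersection. Then $\theta\in p$, and the definition of support gives $\mu(\theta)>0$.

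For the converse, suppose $\mu(\theta)>0$ and, for contradiction, $[\theta]\cap\operatorname{Supp}(\mu)=\varnothing$. Then every $p\in[\theta]$ contains some formula $\psi_p$ with $\mu(\psi_p)=0$. Replacing $\psi_p$ by $\psi_p\wedge\theta$ keeps its measure $0$ by monotonicity. The sets $[\psi_p]$ then form an open cover of the clopen, hence compact, set $[\theta]$. Extract a finite subcover $[\psi_{p_1}],\dots,[\psi_{p_n}]$. This gives $\theta\equiv\bigvee_i\psi_{p_i}$ modulo the theory with parameters in $\A$, so finite subadditivity yields $\mu(\theta)\le\sum_i\mu(\psi_{p_i})=0$, a contradiction.

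The only points needing care are that finite additivity gives monotonicity and finite subadditivity, and that Stone duality lets a finite subcover be read as logical equivalence of formulas. I expect no genuine obstacle beyond invoking compactness correctly at this step.
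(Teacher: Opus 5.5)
Your proof is correct and follows the paper's own argument essentially step for step. Closedness comes from writing the complement of the support as the union of the clopen sets $[\theta]$ with $\mu(\theta)=0$, and the nontrivial direction of the second part comes from covering the compact set $[\theta]$ by measure-zero clopen sets and applying finite subadditivity. Your replacement of $\psi_p$ by $\psi_p\wedge\theta$, which gives an equivalence rather than the implication $\theta\rightarrow\bigvee_i\psi_{p_i}$ used in the paper, is harmless but not needed.
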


\begin{proof}
This is \cite[Section~7.1, p.~99]{simon}, we include details for sake of completeness. A type $p\in S_x(\A)$ does not belong to
$\operatorname{Supp}(\mu)$ if and only if there is a formula
$\theta(x)\in p$ such that
$\mu(\theta(x))=0.$
Consequently,
$S_x(A)\setminus\operatorname{Supp}(\mu)=\bigcup_{\mu(\theta)=0}[\theta],$
where the union ranges over all $\mathcal L_x(A)$-formulas of
$\mu$-measure zero. Each set $[\theta]$ is clopen in the Stone
topology, so the complement of $\operatorname{Supp}(\mu)$ is open.
Thus $\operatorname{Supp}(\mu)$ is closed.  Since $S_x(A)$ is compact,
the support is compact.\\
For the second part of the statement, if $p\in[\theta]\cap\operatorname{Supp}(\mu),$
then $\theta\in p$, and the definition of the support gives
$\mu(\theta)>0.$ Conversely, suppose that
$\mu(\theta)>0$
and, toward a contradiction, that
$[\theta]\cap\operatorname{Supp}(\mu)=\varnothing.$
For every $p\in[\theta]$, choose a formula $\psi_p(x)\in p$ such
that
$\mu(\psi_p(x))=0.$
The clopen sets $[\psi_p]$, as $p$ ranges over $[\theta]$, form an
open cover of the compact set $[\theta]$. Hence there are
$p_1,\ldots,p_n\in[\theta]$ such that
$
[\theta]
\subseteq
\bigcup_{i=1}^n[\psi_{p_i}].
$
Equivalently,
$
\models
\forall x\,
\left(
\theta(x)\rightarrow
\bigvee_{i=1}^n\psi_{p_i}(x)
\right).
$
By monotonicity and finite subadditivity,
$
\mu(\theta)
\leq
\sum_{i=1}^n\mu(\psi_{p_i})
=
0,
$
contradicting $\mu(\theta)>0$. Therefore
$
[\theta]\cap\operatorname{Supp}(\mu)\neq\varnothing.
$
\end{proof}
The following is \cite[Theorem~3.2]{genmeasures}.
\begin{theorem}\label{thm:genstable}
Let \(\A\subseteq \U\) be small, and let \(\mu\) be a global \(A\)-invariant Keisler measure. Then the following are equivalent:
\begin{enumerate}
    \item \(\mu\) is definable over \(A\) and finitely satisfiable in some small model containing \(A\);
    \item \(\mu^\omega_{(x_0,x_1,\dots)}\rest_A\) is totally indiscernible;
    \item for every global \(A\)-invariant Keisler measure \(\lambda_y\), one has
    $\mu_x\otimes \lambda_y=\lambda_y\otimes \mu_x;$
    \item \(\mu\) commutes with itself, namely
    $\mu_x\otimes \mu_y=\mu_y\otimes \mu_x;$
    \item for some small model \(M_0\supseteq A\), for every \(X\subseteq S_x(A)\) Borel  set  over $\A $, and every formula \(\phi(x)\) over \(\U\), if
    $ \mu(X\cap \phi(x))>0,$
    then there is \(a\in M_0^x\) such that \(a\in X\) and
    $\U\models \phi(a).$
\end{enumerate}
\end{theorem}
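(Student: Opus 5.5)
This theorem is quoted from \cite[Theorem~3.2]{genmeasures}, and I will follow the standard Hrushovski--Pillay--Simon scheme. The chain of implications is $(1)\Rightarrow(3)\Rightarrow(4)\Rightarrow(2)\Rightarrow(1)$, and $(5)$ is attached to $(1)$ at the end.

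\textbf{$(1)\Rightarrow(3)$.} Fix a formula $\phi(x,y)$ over a small model $N\supseteq M_0$. I would compute $\mu_x\otimes\lambda_y(\phi)$ as $\int F^{\phi}_{\mu}\,d\lambda$, where $F^{\phi}_{\mu}(q)=\mu(\phi(x,b))$ for $b\models q$. By definability, this fiber function is continuous on $S_y(N)$. The integral can therefore be approximated uniformly by a finite partition into $N$-definable sets, on each of which $F$ varies by less than $\epsilon$.

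For the other side, $\int F^{\phi}_{\lambda}\,d\mu$, I would use finite satisfiability of $\mu$ in $M_0$. It lets one approximate $\mu$ by averages of types realized in $M_0$, i.e.\ by $\frac1n\sum\delta_{a_i}$ on a given finite family of formulas; this is the VC-free version, using that $\mu$ lies in the closed convex hull of $\{\delta_a: a\in M_0\}$. Against such averages the two orders of integration agree. Continuity of $F^{\phi}_{\mu}$ then lets one pass to the limit.

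\textbf{$(3)\Rightarrow(4)$, $(4)\Rightarrow(2)$, $(2)\Rightarrow(1)$.} The implication $(3)\Rightarrow(4)$ is immediate. For $(4)\Rightarrow(2)$, self-commutation together with associativity of $\otimes$ for invariant measures gives invariance of $\mu^{\omega}$ under transpositions of adjacent coordinates. Hence $\mu^{\omega}\rest_A$ is invariant under all finite permutations, which is total indiscernibility.

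The direction $(2)\Rightarrow(1)$ is the main obstacle. The plan is to show that, for each $\phi(x,b)$, the averages $\frac1n\sum_{i<n}\phi(x_i,b)$ converge in $\mu^{\omega}$-measure to $\mu(\phi(x,b))$. This is a weak law of large numbers, obtained by computing the variance via total indiscernibility, since the pairwise products equal $\mu(\phi)^2$ by the product construction. One then uses that $\phi$ cannot have the independence property along a totally indiscernible measure-sequence; more precisely, I would run a Ramsey/compactness argument showing that $b$ cannot alternate infinitely often on a measure-indiscernible sequence. This yields a uniform approximation of $\mu(\phi(x,b))$ by $A$-definable averages, which gives definability. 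Finite satisfiability comes from realizing a finite initial segment of the sequence inside a small model $M_0$.

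\textbf{$(5)$.} For $(1)\Rightarrow(5)$, I would approximate the Borel set $X$ from inside by closed sets using regularity of the $\sigma$-additive extension on $S_x(A)$. Then I would reduce to definable sets, where plain finite satisfiability applies. The converse $(5)\Rightarrow(1)$ gives finite satisfiability by taking $X=S_x(A)$. For definability I would use that $(5)$ yields Borel definability, and then apply the weak-law argument above to upgrade Borel definability to definability.
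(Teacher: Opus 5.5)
The paper gives no proof of this statement. It quotes \cite[Theorem~3.2]{genmeasures}, a theorem about NIP theories, and NIP is genuinely needed: in the random graph, the type of a vertex adjacent to every element satisfies (2) and (4) but is finitely satisfiable in no small model. Your $(3)\Rightarrow(4)\Rightarrow(2)$ is fine, given associativity of $\otimes$ on invariant measures in NIP. The problems are elsewhere.

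\textbf{The main gap: $(2)\Rightarrow(1)$.} Your claim that $b$ cannot alternate infinitely often on a measure-indiscernible sequence is the argument for generically stable \emph{types}, where the Morley sequence is an indiscernible sequence of points. For a measure there is no such sequence, and your own variance computation refutes the claim. Under $\mu^{\omega}$ the events $\phi(x_i,b)$ are independent with probability $r=\mu(\phi(x,b))$, so for $0<r<1$ they alternate infinitely often almost surely. The weak law for a fixed $b$ holds without total indiscernibility. It gives no uniformity in $b$, and uniformity is what definability needs. What is missing is a statement uniform in $b$, for instance: for each $\phi$ and $\epsilon$, an $\mathcal L(A)$-formula $\theta(x_1,\dots,x_n)$ of $\mu^{(n)}$-measure close to $1$ such that every realization $\bar a$ satisfies $|\mathrm{Av}_i\,\phi(a_i,b)-\mu(\phi(x,b))|<\epsilon$ for \emph{all} $b$ (the fim property). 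Producing such uniformity from total indiscernibility is exactly where a VC-type NIP argument has to be carried out, and your outline does not supply it. Once $\theta$ exists, two things follow at once: definability over $A$, and finite satisfiability in any model containing $A$ (realize $\theta$ there).

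\textbf{$(1)\Rightarrow(3)$.} Matching $\mu$ by $\nu=\sum_i r_i\delta_{a_i}$ on finitely many formulas controls neither side of the equation.
\begin{itemize}
\item $\int F^{\phi}_{\lambda}\,d\nu$ need not approach $\int F^{\phi}_{\lambda}\,d\mu$, because $F^{\phi}_{\lambda}$ is only Borel.
\item $F^{\phi}_{\nu}$ is close to $F^{\phi}_{\mu}$ only at the finitely many chosen parameters, so $\int F^{\phi}_{\nu}\,d\lambda$ need not be close to $\int F^{\phi}_{\mu}\,d\lambda$.
\end{itemize}
Continuity of $F^{\phi}_{\mu}$ would help only if $\lambda$ were the measure being approximated. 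What works in NIP is the uniform approximation above. Write $\int F^{\phi}_{\lambda}\,d\mu=\int\mathrm{Av}_iF^{\phi}_{\lambda}(x_i)\,d\mu^{(n)}$; on $\theta$ the integrand is within $\epsilon$ of $\int F^{\phi}_{\mu}\,d\lambda$. So the limit step is not VC-free.

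\textbf{$(1)\Rightarrow(5)$.} You use only finite satisfiability, and that cannot suffice. A closed $C\subseteq S_x(A)$ is a partial type. Finite satisfiability gives a witness in $M_0$ for each finite fragment separately, not one $a\in M_0$ with $\tp(a/A)\in C$ and $\models\phi(a)$. Here is a counterexample to that route. In DLO let $A=\{a_i\}_{i<\omega}$ be increasing, and let $p=\{x>e: e\in\U,\ e\le a_i\text{ for some }i\}\cup\{x<d: d\in\U,\ d>A\}$. Then $\delta_p$ is finitely satisfiable in every $M_0\supseteq A$. Take $X=\{q\in S_x(A): (x>a_i)\in q\text{ for all }i\}$ and $\phi(x)=x<b$, where $b>A$ lies below every $m\in M_0$ with $m>A$. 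Then $\delta_p(X\cap\phi)=1$, but there is no witness in $M_0$. So definability of $\mu$ must enter that proof.

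\textbf{$(5)\Rightarrow(1)$.} Taking $X=S_x(A)$ correctly gives finite satisfiability, but you give no route to definability. The weak-law argument you invoke starts from $(2)$, which you have not derived from $(5)$.
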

\begin{definition}\label{def:generically-stable-global-measure}
Let \(\A\subseteq \U\) be small set of parameters. Let \(\mu\) be a global \(\A\)-invariant Keisler measure. We say that \(\mu\) is \emph{generically stable over \(A\)} if it satisfies one, equivalently all, of the conditions of Theorem~\ref{thm:genstable}.
\end{definition}

\begin{fact}\label{fact:global-invariant-extension}
Assume that $T$ is $\mathrm{NIP}$. Every Keisler measure over a model $M$ admits
a global $M$-invariant extension.
\end{fact}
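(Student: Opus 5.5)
We prove \cref{fact:global-invariant-extension} by reducing to the standard NIP fact for types. Every type $p\in S_x(M)$ has a global extension that is finitely satisfiable in $M$: take an ultrafilter on $M^x$ extending the filter of sets $\phi(M)$ for $\phi\in p$, and let the global type consist of all $\psi(x,b)$ with $\psi(M,b)$ in the ultrafilter. Such a coheir is $M$-invariant in any theory, so NIP is not needed at this stage. The task is to do the same thing measure-theoretically and uniformly.

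\textbf{Step 1 (Radon measure on the type space).} Pass from $\mu$ to the regular Borel probability measure $\tilde\mu$ on $S_x(M)$ that it induces, as in the discussion preceding \cref{def:invariant-definable-fs}. Here $\tilde\mu([\phi])=\mu(\phi)$, and the extension exists by Carathéodory, since clopen sets are compact and finite additivity therefore gives premeasure countable additivity.

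\textbf{Step 2 (a continuous choice of invariant extensions).} Fix a global extension $p\mapsto \hat p$ that is finitely satisfiable in $M$. This choice need not be Borel. To avoid that problem, work instead with $\beta M^x$, the Stone space of ultrafilters on $M^x$. The restriction map $\pi:\beta M^x\to S_x(M)$, $\mathcal U\mapsto\{\phi:\phi(M)\in\mathcal U\}$, is a continuous surjection between compact Hausdorff spaces. Hence $\tilde\mu$ lifts to a Radon probability measure $\nu$ on $\beta M^x$ with $\pi_*\nu=\tilde\mu$. To get $\nu$, use Hahn--Banach/Riesz: the functional $f\circ\pi\mapsto\int f\,d\tilde\mu$ is positive of norm $1$ on the subspace $C(S_x(M))\circ\pi$, so extend it positively to $C(\beta M^x)$.

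\textbf{Step 3 (define the global measure).} For a global formula $\psi(x,b)$, the set $\{\mathcal U:\psi(M,b)\in\mathcal U\}$ is clopen in $\beta M^x$. Put
\[\lambda(\psi(x,b)) := \nu(\{\mathcal U\in\beta M^x:\psi(M,b)\in\mathcal U\}).\]
Finite additivity is immediate. For $\phi$ over $M$ the clopen set equals $\pi^{-1}([\phi])$, so $\lambda$ extends $\mu$.

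\textbf{Step 4 (invariance).} The clopen set in Step 3 depends only on the trace $\psi(M,b)$. If $b\equiv_M b'$, then $\psi(M,b)=\psi(M,b')$, so $\lambda(\psi(x,b))=\lambda(\psi(x,b'))$. This gives $M$-invariance, and in fact $\lambda$ is finitely satisfiable in $M$.

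The main obstacle is Step 2, lifting the measure along $\pi$. The Riesz/Hahn--Banach argument settles it. An alternative is to approximate $\mu$ by averages of types, which however requires NIP and gives only approximations, so I would use the lifting argument. The argument therefore does not really need NIP. NIP would matter only if one wanted canonical properties, such as uniqueness or Borel definability, of the extension.
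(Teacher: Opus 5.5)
Your proof is correct, and it takes a different route from the paper. The paper handles the statement in one line. It observes that a Keisler measure over $M$ does not fork over $M$, and then cites \cite[Lemma~5.2]{genmeasures}, an NIP result that produces a global $M$-invariant (nonforking) extension of such a measure. NIP is used exactly at that point.

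You instead construct a \emph{measure coheir} directly. You lift $\tilde\mu$ along $\pi\colon\beta M^x\to S_x(M)$, which is equivalent to extending the finitely additive measure $\phi(M)\mapsto\mu(\phi)$ from the algebra of $M$-definable subsets of $M^x$ to all of $\mathcal P(M^x)$. You then read off $\lambda(\psi(x,b))$ from the trace $\psi(M,b)$. This gives a global extension that is finitely satisfiable in $M$, hence $M$-invariant, and the argument works in any theory. Your remark that NIP is superfluous is therefore right.

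The trade-off is as follows. Your route is self-contained and proves a stronger statement (finite satisfiability, with no NIP assumption), at the cost of a nonconstructive Hahn--Banach/Riesz step. The paper's route is shorter but relies on the Hrushovski--Pillay forking calculus for measures in NIP theories. Either extension suffices for the paper's later use of this fact in \cref{thm:radon-barycentre-criterion}, where only the existence of some global $M$-invariant extension is needed.
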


\begin{proof}
A Keisler measure over $M$ does not fork over $M$. Hence the result is
\cite[Lemma~5.2]{genmeasures}.
\end{proof}
\begin{fact}\label{fact:smoothmeasurezero} Let $\lambda$ be a smooth Keisler measure over $M$.
And let $p$ be a non-realized type over $M$. Then $\widetilde{\lambda}(p)=0$, where $\widetilde{\lambda}$ is the Borel measure on space of types over $M$ induced by $\lambda$. 
\end{fact}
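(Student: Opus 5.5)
The plan is to argue by contradiction: if $\widetilde{\lambda}(p)>0$, split $\lambda$ into a part that sits on $p$ and a remainder, and then duplicate the $p$-part in two incompatible ways over a larger model, so that $\lambda$ has two different extensions. Throughout, $p\in S_x(M)$ is non-realized and we suppose $r:=\widetilde{\lambda}(\{p\})>0$.

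\textbf{Step 1 (regularity).} Since $\{p\}=\bigcap_{\theta\in p}[\theta]$ is closed, regularity of $\widetilde{\lambda}$ gives $r=\inf_{\theta\in p}\lambda(\theta)$. Decompose
\[
\widetilde{\lambda}=r\,\delta_p+(1-r)\,\nu ,
\]
where $\nu$ is a Borel probability measure on $S_x(M)$ with $\nu(\{p\})=0$; if $r=1$, simply take $\lambda=\delta_p$.

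\textbf{Step 2 (two extensions of $p$).} Take $N\succ M$ sufficiently saturated. Because $p$ is non-realized, we want two distinct global completions $q_1\neq q_2\in S_x(N)$ of $p$. Choose $a\models p$ with $a\in N$; this is possible because $N$ is saturated and $p$ is a type over the small set $M$. Then $x=a$ is in $p$'s closure in the following sense: $q_1$ is the realized type $\tp(a/N)$, i.e.\ the one containing $x=a$. For $q_2$, take any other extension of $p$ to $N$ that omits $x=a$. Such an extension exists because $p\cup\{x\neq a\}$ is consistent: $p$ is non-realized, hence non-algebraic, so it has infinitely many realizations. Thus $q_1\neq q_2$, and they are separated by the $N$-formula $x=a$.

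\textbf{Step 3 (lifting the remainder).} Extend $\nu$ to a Keisler measure $\nu'$ over $N$. Since $\nu$ is a Keisler measure over $M$, it admits a global $M$-invariant extension by \cref{fact:global-invariant-extension} under NIP. Without NIP, use the general fact that any Keisler measure over $M$ extends to $N$, by Hahn--Banach or Łoś-type compactness, which is standard. Now set
\[
\lambda_i:=r\,\delta_{q_i}+(1-r)\,\nu', \qquad i=1,2 .
\]
Both $\lambda_1$ and $\lambda_2$ restrict to $\lambda$ on $M$-formulas. Indeed, for $\theta\in\mathcal L_x(M)$, both $\delta_{q_i}(\theta)$ and $\delta_p(\theta)$ equal $1$ exactly when $\theta\in p$, and $\nu'\rest_M=\nu$ viewed as a Keisler measure.

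\textbf{Step 4 (contradiction).} Evaluating at the formula $x=a$,
\[
\lambda_1(x=a)-\lambda_2(x=a)=r\bigl(1-0\bigr)=r>0 ,
\]
since the $\nu'$ terms cancel. So $\lambda_1\neq\lambda_2$ are two distinct extensions of $\lambda$ to $N$, contradicting smoothness. Hence $\widetilde{\lambda}(\{p\})=0$.

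\textbf{Main obstacle.} The delicate point is Step 3, namely making sure that $\nu$, which is only a Borel measure on types, really restricts to a finitely additive Keisler measure over $M$ and extends to $N$. This is immediate, because restricting to clopen sets yields a Keisler measure, and extensions always exist. The other potential gap is justifying the existence of the second extension $q_2$, which is handled by non-algebraicity of $p$.
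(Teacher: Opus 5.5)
Your proof is correct. It follows a slightly different and more self-contained route than the paper's.

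The paper argues by localization. It cites \cite[Lemma~4.1]{SimonGenericallyStable} to say that the localization of the smooth measure $\lambda$ at $\{p\}$, namely $\delta_p$, is again smooth. This makes sense once $\widetilde\lambda(\{p\})>0$. It then observes that $\delta_p$ cannot be smooth, because two distinct realizations $a\neq a'$ of $p$ give two distinct extensions $\delta_a$ and $\delta_{a'}$.

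You never invoke the localization lemma. You write $\widetilde\lambda=r\delta_p+(1-r)\nu$, extend $\nu$ arbitrarily to $\nu'$ over $N$, and extend the atomic part in two ways: $q_1=\tp(a/N)$ containing $x=a$, and $q_2$ containing $x\neq a$. This is exactly the mechanism behind that lemma in the atomic case, so you are in effect reproving the special case you need. Your version is independent of the citation. It uses nothing beyond the existence of extensions of finitely additive measures to larger parameter sets, so the detour through \cref{fact:global-invariant-extension} and NIP in Step 3 is unnecessary. The paper's version is shorter, given the reference.

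Three small points:
\begin{enumerate}
\item The passage from \emph{non-realized} to \emph{non-algebraic} uses that $M$ is a model, since an algebraic type over a model is realized in it. Say this explicitly.
\item The appeal to regularity in Step 1 is not needed. Nonnegativity of $\widetilde\lambda-r\delta_p$ follows from monotonicity alone: if $p\in B$ then $\widetilde\lambda(B)\geq\widetilde\lambda(\{p\})=r$.
\item The sentence about $x=a$ being \emph{in $p$'s closure} is garbled. It should simply define $q_1=\tp(a/N)$.
\end{enumerate}
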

\begin{proof}
By \cite[Lemma $4.1$]{SimonGenericallyStable} the localization at $\{p\}$, namely the Dirac measure
$\delta_p$, is smooth over $M$.
If $p$ is not realized in $M$, it has distinct
realizations $a,a'\in\mathbb U$.
The Dirac measures $\delta_a$ and $\delta_{a'}$ give distinct
global extensions of $\delta_p$, but then $\delta_{p}$ is not smooth.
\end{proof}

\begin{notation}
For a fixed tuple of variables $x$ and a parameter set $\A\subseteq\U$, we will use $\mathbb B_x(\A)$ to denote the Boolean algebra of $\mathcal L_x(\A)$-formulas modulo
logical equivalence in $\U$. We identify a formula with its
equivalence class in this Boolean algebra. When the tuple of variables
is clear, we simply write $\mathbb B(\A)$.

For any Boolean sub-algebra $\mathbb F\subseteq\mathbb B_x(\U)$  and
any global Keisler measure $\mu$ we write $\mu\rest_{\mathbb F}$
for the restriction of $\mu$ to $\mathbb F$. For any small model $M$ of $\U$ we will write $\mu\rest_{M}$ to indicate the restriction of $\mu$ to $\mathcal{L}_{x}(M)$. 
\end{notation}
\begin{definition}
\label{def:F-approximation}
Let $\mu$ be a global Keisler measure and let
$\mathbb F\subseteq\mathbb B_x(\U)$ be a Boolean subalgebra.

\begin{enumerate}
\item Let $\phi(x,b)$ be an $\mathcal L_x(\U)$-formula. We say that
$\phi(x,b)$  is
\emph{$\mathbb F$-approximated with respect to $\mu$} if, for every
$\epsilon>0$, there are formulas
$\theta_-(x),\theta_+(x)\in\mathbb F$
such that
$\U\models
\forall x\,
\bigl(
\theta_-(x)\rightarrow
\phi(x,b)\rightarrow
\theta_+(x)
\bigr)$
and
$|\mu(\theta_+(x))-\mu(\theta_-(x))|<\epsilon.$

\item We say that $\mu$ is \emph{$\mathbb F$-approximated} if every
$\mathcal L_x(\U)$-formula is $\mathbb F$-approximated with respect
to $\mu$.

\item For an $\mathcal L_x(\U)$-formula $\phi(x,b)$, we define its
\emph{inner $\mathbb F$-measure} and
\emph{outer $\mathbb F$-measure} by
\begin{align*}
\mu_{\mathrm{in}}^{\mathbb F}(\phi(x,b))
&:=
\sup
\left\{
\mu(\theta(x)):
\begin{array}{l}
\theta(x)\in\mathbb F,\\
\U\models
\forall x\,
\bigl(
\theta(x)\rightarrow\phi(x,b)
\bigr)
\end{array}
\right\},\\
\mu_{\mathrm{out}}^{\mathbb F}(\phi(x,b))
&:=
\inf
\left\{
\mu(\theta(x)):
\begin{array}{l}
\theta(x)\in\mathbb F,\\
\U\models
\forall x\,
\bigl(
\phi(x,b)\rightarrow\theta(x)
\bigr)
\end{array}
\right\}.
\end{align*}
\end{enumerate}
\end{definition}
The following lemma will play a fundamental role in several of our arguments. It is quite close to Keisler's \cite[Lemma 1.3]{Keisler} the result there is not in the precise form needed here. For this reason, and for the sake of completeness, we include a detailed proof.
\begin{lemma}
\label{lem:unique-extension-criterion}
Let $\mu$ be a global Keisler measure, let
$\mathbb F\subseteq\mathbb B_x(\U)$ be a Boolean subalgebra, and let
$\phi(x,b)$ be an $\mathcal L_x(\U)$-formula. The following are
equivalent.

\begin{enumerate}
\item The formula $\phi(x,b)$ is $\mathbb F$-approximated with
respect to $\mu$.

\item One has
$\mu_{\mathrm{in}}^{\mathbb F}(\phi(x,b))=\mu_{\mathrm{out}}^{\mathbb F}(\phi(x,b)).$

\item The value of $\phi(x,b)$ is uniquely determined by
$\mu\rest_{\mathbb F}$: whenever $\nu_{1}$ and $\nu_{2}$ are  global Keisler measures satisfying $\nu_{1}\rest_{\mathbb F}=\mu\rest_{\mathbb F}=\nu_{2} \rest_{\mathbb{F}}$ one has $\nu(\phi(x,b))=\mu(\phi(x,b)).$
\end{enumerate}
\end{lemma}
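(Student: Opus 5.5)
We will prove the cycle $(1)\Rightarrow(2)\Rightarrow(3)\Rightarrow(1)$. Note that in (3) the conclusion should read $\nu_1(\phi(x,b))=\nu_2(\phi(x,b))$, which in particular equals $\mu(\phi(x,b))$ since $\mu$ is one such extension.

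The equivalence $(1)\Leftrightarrow(2)$ is elementary. By monotonicity, $\mu_{\mathrm{in}}^{\mathbb F}(\phi)\le\mu(\phi)\le\mu_{\mathrm{out}}^{\mathbb F}(\phi)$, since $\theta_-\to\phi\to\theta_+$ gives $\mu(\theta_-)\le\mu(\phi)\le\mu(\theta_+)$.
\begin{itemize}
\item If $\phi$ is $\mathbb F$-approximated, then for every $\epsilon>0$ the outer and inner measures differ by less than $\epsilon$, so they coincide.
\item Conversely, if the outer and inner measures are equal, pick $\theta_-$ with $\mu(\theta_-)>\mu_{\mathrm{in}}^{\mathbb F}(\phi)-\epsilon/2$ and $\theta_+$ with $\mu(\theta_+)<\mu_{\mathrm{out}}^{\mathbb F}(\phi)+\epsilon/2$. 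This gives $(1)$.
\end{itemize}

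For $(1)\Rightarrow(3)$, let $\nu_1,\nu_2$ agree with $\mu$ on $\mathbb F$. For any approximating pair $\theta_\pm$ and $i=1,2$, we have $\mu(\theta_-)=\nu_i(\theta_-)\le\nu_i(\phi)\le\nu_i(\theta_+)=\mu(\theta_+)$. Thus $\nu_1(\phi)$, $\nu_2(\phi)$ and $\mu(\phi)$ all lie in an interval of length less than $\epsilon$, and letting $\epsilon\to0$ gives equality.

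The main step is $(3)\Rightarrow(2)$, shown by contraposition. Suppose $\alpha:=\mu_{\mathrm{in}}^{\mathbb F}(\phi)<\mu_{\mathrm{out}}^{\mathbb F}(\phi)=:\beta$. We construct, for every $r\in[\alpha,\beta]$, a global Keisler measure $\nu$ extending $\mu\rest_{\mathbb F}$ with $\nu(\phi)=r$. This is the classical extension of a finitely additive measure to one new element (Łoś–Marczewski, as in Keisler's Lemma~1.3). Let $\mathbb F'$ be the Boolean algebra generated by $\mathbb F$ and $\phi$; its elements have the form $(\theta_1\wedge\phi)\vee(\theta_2\wedge\neg\phi)$ with $\theta_i\in\mathbb F$.
\begin{itemize}
\item Define $\nu(\theta\wedge\phi):=\mu_{\mathrm{in}}^{\mathbb F}(\theta\wedge\phi)$ and $\nu(\theta\wedge\neg\phi):=\mu(\theta)-\nu(\theta\wedge\phi)$, extended additively. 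This gives a finitely additive extension with $\nu(\phi)=\alpha$.
\item Symmetrically, using outer measures gives an extension with $\nu(\phi)=\beta$.
\item Convex combinations then realize every value in $[\alpha,\beta]$, though the two endpoints already suffice.
\end{itemize}
Checking finite additivity of the inner-measure functional on $\mathbb F'$ is the technical obstacle. It reduces to superadditivity of $\mu_{\mathrm{in}}$ on disjoint sets together with the identity $\mu_{\mathrm{in}}(\theta\wedge\phi)+\mu_{\mathrm{out}}(\theta\wedge\neg\phi)=\mu(\theta)$ for $\theta\in\mathbb F$, which follows by complementing within $\theta$.

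Finally, extend $\nu$ from $\mathbb F'$ to all of $\mathbb B_x(\U)$. Finitely additive probability measures on a Boolean subalgebra always extend to the whole algebra, by Hahn–Banach or by Zorn's lemma applied to the one-step extension just described. This yields two global Keisler measures that agree with $\mu$ on $\mathbb F$ but give $\phi$ the different values $\alpha\neq\beta$, contradicting $(3)$.
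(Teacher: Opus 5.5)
Your proof is correct and follows the paper's route. Both rest on the sandwich $\mu_{\mathrm{in}}^{\mathbb F}(\phi)\le\nu(\phi)\le\mu_{\mathrm{out}}^{\mathbb F}(\phi)$ for every extension $\nu$ of $\mu\rest_{\mathbb F}$, handle $(1)\Leftrightarrow(2)$ by the same $\epsilon/2$ argument, and prove $(3)\Rightarrow(2)$ by producing extensions with distinct values in $[\mu_{\mathrm{in}}^{\mathbb F}(\phi),\mu_{\mathrm{out}}^{\mathbb F}(\phi)]$; the paper simply cites Keisler's Lemma~1.3(iv) for this, whereas you sketch the {\L}o{\'s}--Marczewski construction (and you are right that (3) should conclude $\nu_1(\phi)=\nu_2(\phi)$).

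One slip in that sketch: additivity of your $\nu$ on $\mathbb F'$ needs $\mu_{\mathrm{in}}^{\mathbb F}\bigl((\theta\vee\theta')\wedge\phi\bigr)=\mu_{\mathrm{in}}^{\mathbb F}(\theta\wedge\phi)+\mu_{\mathrm{in}}^{\mathbb F}(\theta'\wedge\phi)$ for disjoint $\theta,\theta'\in\mathbb F$. Superadditivity together with the complementation identity only gives $\geq$. The inequality $\leq$ comes from splitting any $\sigma\in\mathbb F$ with $\sigma\le(\theta\vee\theta')\wedge\phi$ as $(\sigma\wedge\theta)\vee(\sigma\wedge\theta')$.
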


\begin{proof}
Let $\nu$ be any
Keisler measure defined on a Boolean algebra containing both
$\mathbb F$ and $\phi(x,b)$, and suppose that
$\nu\rest_{\mathbb F}=\mu\rest_{\mathbb F}.$
If $\theta(x)\in\mathbb F$ and
$\U\models
\forall x\,
\bigl(
\theta(x)\rightarrow\phi(x,b)
\bigr),$
then monotonicity of $\nu$ gives
$\mu(\theta(x))=\nu(\theta(x))\leq\nu(\phi(x,b)).$
Taking the supremum over all such $\theta$ yields
$\mu_{\mathrm{in}}^{\mathbb F}(\phi(x,b))\leq\nu(\phi(x,b)).$
Similarly, if $\theta(x)\in\mathbb F$ and
$\U\models
\forall x\,
\bigl(
\phi(x,b)\rightarrow\theta(x)
\bigr),$
then
$\nu(\phi(x,b))\leq\nu(\theta(x))=\mu(\theta(x)).$
Taking the infimum gives
$\nu(\phi(x,b))\leq \mu_{\mathrm{out}}^{\mathbb F}(\phi(x,b)).$
Thus every extension satisfies
\[\mu_{\mathrm{in}}^{\mathbb F}(\phi(x,b))
\leq
\nu(\phi(x,b))
\leq
\mu_{\mathrm{out}}^{\mathbb F}(\phi(x,b)).
\tag{$\star$}\]
We now prove the equivalences.\\
\noindent
\emph{$(1)\rightarrow(2)$.}
Assume that $\phi(x,b)$ is $\mathbb F$-approximated with respect to $\mu$. Let
$\epsilon>0$. Choose
$
\theta_-(x),\theta_+(x)\in\mathbb F
$
such that
$\theta_-(x)\rightarrow\phi(x,b)\rightarrow\theta_+(x)$
and $\mu(\theta_+(x))-\mu(\theta_-(x))<\epsilon.$
By the definitions of inner and outer measure,
$\mu_{\mathrm{in}}^{\mathbb F}(\phi(x,b))\geq\mu(\theta_-(x))$ and $\mu_{\mathrm{out}}^{\mathbb F}(\phi(x,b))\leq\mu(\theta_+(x)).$
Consequently,
$0
\leq
\mu_{\mathrm{out}}^{\mathbb F}(\phi(x,b))-\mu_{\mathrm{in}}^{\mathbb F}(\phi(x,b)) \leq
\mu(\theta_+(x))-\mu(\theta_-(x))<
\epsilon.$
Since $\epsilon>0$ was arbitrary, we obtain
$\mu_{\mathrm{in}}^{\mathbb F}(\phi(x,b))=\mu_{\mathrm{out}}^{\mathbb F}(\phi(x,b)).$

\smallskip

\noindent
\emph{$(2)\rightarrow(1)$.}
Assume that $(2)$ holds and let $r=\mu_{\mathrm{in}}^{\mathbb F}(\phi(x,b))=\mu_{\mathrm{out}}^{\mathbb F}(\phi(x,b))$ Applying
\textup{$\star$} to the original measure $\mu$ gives
$\mu_{\mathrm{in}}^{\mathbb F}(\phi(x,b))\leq\mu(\phi(x,b))\leq \mu_{\mathrm{in}}^{\mathbb F}(\phi(x,b)).$
Hence $\mu(\phi(x,b))=r.$ Let $\epsilon>0$. By the definition of the supremum, there is
$\theta_-(x)\in\mathbb F$ such that
$\theta_-(x)\rightarrow\phi(x,b)$ and
$\mu(\theta_-(x))>r-\frac{\epsilon}{2}.$
Similarly, by the definition of the infimum, there is
$\theta_+(x)\in\mathbb F$ such that
$\phi(x,b)\rightarrow\theta_+(x)$
and
$\mu(\theta_+(x))<r+\frac{\epsilon}{2}.$\\
Therefore,
$\mu(\theta_+(x))-\mu(\theta_-(x))<\left(r+\frac{\epsilon}{2}\right)-\left(r-\frac{\epsilon}{2}\right)=\epsilon.$
Thus $\phi(x,b)$ is $\mathbb F$-approximated with respect to $\mu$.

\smallskip

\noindent
\emph{$(2)\rightarrow(3)$.}
Let  $r=\mu_{\mathrm{in}}^{\mathbb F}(\phi(x,b))=\mu_{\mathrm{out}}^{\mathbb F}(\phi(x,b))$. Let $\nu_1$ and $\nu_2$ be global
extensions of $\mu\rest_{\mathbb F}$. By \textup{($\star$)}, for
$i=1,2$,
$
r=\mu_{\mathrm{in}}^{\mathbb F}(\phi(x,b))
\leq
\nu_i(\phi(x,b))
\leq
\mu_{\mathrm{out}}^{\mathbb F}(\phi(x,b))=r.
$
Since $\mu_{\mathrm{in}}^{\mathbb F}(\phi(x,b))=\mu_{\mathrm{out}}^{\mathbb F}(\phi(x,b))$, both values must be equal to $r$. Hence
$\nu_1(\phi(x,b))=\nu_2(\phi(x,b)).$

\smallskip

\noindent
\emph{$(3)\rightarrow(2)$.}
Suppose, toward a contradiction, that
$
\mu_{\mathrm{in}}^{\mathbb F}(\phi(x,b))< \mu_{\mathrm{out}}^{\mathbb F}(\phi(x,b)).
$
Choose two distinct real numbers
$
r_1,r_2\in[\mu_{\mathrm{in}}^{\mathbb F}(\phi(x,b)),\mu_{\mathrm{out}}^{\mathbb F}(\phi(x,b))].
$
By \cite[Lemma~1.3.(iv)]{Keisler}, for each $i=1,2$ there is an extension
$\nu_i$ of $\mu\rest_{\mathbb F}$ to a global Keisler measure such
that
$
\nu_i(\phi(x,b))=r_i.
$
Thus
$
\nu_1(\phi(x,b))
\neq
\nu_2(\phi(x,b)),
$
contradicting $(3)$. Therefore $\mu_{\mathrm{in}}^{\mathbb F}(\phi(x,b))=\mu_{\mathrm{out}}^{\mathbb F}(\phi(x,b))$.
\end{proof}

\begin{proposition}
\label{prop: inner and outer measures}
Let $\mu$ be a global measure, let $A\subset \U$ be a small set, let
$\mathbb F\subseteq\mathbb B_x(\U)$ be a Boolean subalgebra containing $\mathbb B_x(A)$, and let $\mu^{\mathbb F}$ be the restriction of $\mu$ to $\mathbb F$. 

Then the following hold.

\begin{enumerate}
\item If $\mu^{\mathbb F}$ is $A$-definable, then $\mu_{\mathrm{out}}^{\mathbb F}$ is $A$-definable.

\item If $\mu^{\mathbb F}$ is finitely satisfiable over $A$, then $\mu_{\mathrm{inn}}^{\mathbb F}$ is finitely satisfiable over $A$.

\item If $\mu$ is $\mathbb F$-approximated, then $\mu$ is $A$-definable/finitely satisfiable over $A$ if and only if $\mu^{\mathbb F}$ is $A$-definable/finitely satisfiable over $A$.\end{enumerate}
\end{proposition}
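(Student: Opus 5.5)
The plan is to prove the three items in order, working directly from the definitions of $\mu_{\mathrm{in}}^{\mathbb F}$ and $\mu_{\mathrm{out}}^{\mathbb F}$ and from \cref{lem:unique-extension-criterion}. Throughout, read ``$\mu^{\mathbb F}$ is $A$-definable'' as follows: for every formula $\theta(x,z)$ whose instances $\theta(x,c)$ lie in $\mathbb F$, the map $c\mapsto\mu(\theta(x,c))$ is $A$-invariant, and the sets $\{\tp(c/A): \mu(\theta(x,c))<r\}$ are open. Finite satisfiability of $\mu^{\mathbb F}$ is read analogously, restricted to formulas in $\mathbb F$.

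\emph{Item (1).} Fix $\phi(x,y)$ and $r$, and suppose $\mu_{\mathrm{out}}^{\mathbb F}(\phi(x,b))<r$.
\begin{itemize}
\item By definition there is $\theta(x,c)\in\mathbb F$ with $\phi(x,b)\rightarrow\theta(x,c)$ and $\mu(\theta(x,c))<r$.
\item By definability of $\mu^{\mathbb F}$, the condition $\mu(\theta(x,c'))<r$ holds on an $A$-type-definable open neighbourhood of $\tp(c/A)$.
\item Apply compactness to the formula $\forall x\,(\phi(x,y)\rightarrow\theta(x,z))$ together with this open set. We want the set of $\tp(b'/A)$ for which some $c'$ exists with $\phi(x,b')\rightarrow\theta(x,c')$ and $\mu(\theta(x,c'))<r$ to be open. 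It is the projection to the $y$-coordinate of an open subset of $S_{yz}(A)$, and projections of open sets are open, so it is open.
\end{itemize}
$A$-invariance of $\mu_{\mathrm{out}}^{\mathbb F}$ comes from invariance of $\mu^{\mathbb F}$ and the fact that automorphisms fixing $A$ permute $\mathbb F$. Here I will need $\mathbb F$ to be closed under $\mathrm{Aut}(\U/A)$; this holds in the intended applications (stable parts over $M$), and I would make the assumption explicit.

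\emph{Item (2).} Suppose $\mu_{\mathrm{in}}^{\mathbb F}(\phi(x,b))>0$. Then some $\theta\in\mathbb F$ with $\theta\rightarrow\phi(x,b)$ has $\mu(\theta)>0$. Finite satisfiability of $\mu^{\mathbb F}$ gives $a\in A^x$ with $\models\theta(a)$, hence $\models\phi(a,b)$. This item is immediate.

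\emph{Item (3).} Assume $\mu$ is $\mathbb F$-approximated. By \cref{lem:unique-extension-criterion}, for every formula we have $\mu=\mu_{\mathrm{in}}^{\mathbb F}=\mu_{\mathrm{out}}^{\mathbb F}$.
\begin{itemize}
\item \emph{Forward direction.} Restricting to $\mathbb F$ preserves definability and finite satisfiability.
\item \emph{Finite satisfiability, converse.} This follows from (2), since $\mu=\mu_{\mathrm{in}}^{\mathbb F}$.
\item \emph{Definability, converse.} Item (1) gives that $\{q:\mu(\phi(x,b))<r\}$ is open, because $\mu=\mu_{\mathrm{out}}^{\mathbb F}$. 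Applying (1) to $\neg\phi$ and using $\mu(\neg\phi)=1-\mu(\phi)$ shows that $\{q:\mu(\phi(x,b))>r\}$ is also open.
\end{itemize}

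The main obstacle is the compactness and projection step in (1): the witness $c$ for the outer approximation must be allowed to vary with $b$. This requires uniformity, namely that the $\mathbb F$-formulas come in families $\theta(x,z)$ whose parameter sets are type-definable over $A$ (for example, $z$ ranging over $\St_A$). I would first try to show that the relevant set is a union over $\theta(x,z)$ of projections of open sets of $S_{yz}(A)$, which makes it open without any further hypothesis.
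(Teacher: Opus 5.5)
Your proposal is correct and follows the paper's proof essentially step for step. In (1) the paper writes your projected open set explicitly as $\big[\exists z\,(\eta(z)\wedge\forall x\,(\phi(x,w)\rightarrow\theta_+(x,z)))\big]$, with $\eta(z)\in\tp(d/A)$ supplied by definability of $\mu^{\mathbb F}$; (2) is identical; and (3) is deduced from $\mu=\mu_{\mathrm{in}}^{\mathbb F}=\mu_{\mathrm{out}}^{\mathbb F}$ exactly as you do, with your complement step for the sets $\{\mu>r\}$ a harmless extra. The paper uses the uniformity you flag without stating it, and two refinements apply: the projection argument needs the parameter sets of the families to be $A$-definable (as the sorts of $\St_A$ are) rather than merely type-definable, and given this, $A$-invariance of $\mu_{\mathrm{out}}^{\mathbb F}$ follows from the same argument without assuming $\mathbb F$ is $\mathrm{Aut}(\U/A)$-closed, since that existential formula lies in $\tp(b/A)$.
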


\begin{proof} 
The third item follows from the first two since \cref{lem:unique-extension-criterion} implies that if $\mu$ is $\mathbb F$-approximated then $\mu$ is equal to both $\mu_{\mathrm{inn}}^{\mathbb F}$ and $\mu_{\mathrm{out}}^{\mathbb F}$ on any definable set.

We now prove $(1)$. Let $\phi(x,w)$ be a $\mathcal{L}$-formula, $r \in [0,1]$ and let 
\[O=\{ q \in S_{y}(A) \ | \ \mu_{\mathrm{out}}^{\mathbb F}(\phi(x,b'))< r  \ \text{for all $b' \models q$}\}.\] We want to show that $O$ is an open set. Fix an element $b \in \mathbb{M}$ and let $q_{0}=\tp(b/A) \in O$. By definition of the outer measure, there is a formula $\theta_{+}(x,d) \in \mathbb F$ such that $\phi(x,b) \rightarrow \theta_{+}(x,d)$ and  $\mu^\mathbb F(\theta_{+}(x,d)) < r$. By hypothesis there is a formula $\eta(z) \in tp(d/A)$ such that for any $\mathbb{M} \models \eta(d')$ we have that $\mu^\mathbb F\big( \theta_+(x,d')\big) < r$.\\ 

Let $\phi(w):=\exists z \big( \eta(z) \wedge \forall x\big( \phi(x,w)\rightarrow \theta_+(x,z)\big).$
Note that $\phi(w) \in \tp(b/A)$ and for any $b'$ such that $\mathbb{M} \models \phi(b')$ then there is $\mathbb{M}\models \eta(d')$ such that $\phi(x,b')\rightarrow \theta_+(x, d')$ and $\mu^\mathbb F(\theta_+(x,d'))<r$. By definition,  $\mu_{\mathrm{out}}^{\mathbb F}\big(\phi(x,b')\big)<r$. Hence $q_{0} \in [\phi(w)] \subseteq O$, thus $O$ is open.

For $(2)$, we want to show that if $\mu^{\mathbb{F}}$ is  is finitely satisfiable over $M$, so it is $\mu^\mathbb F_{\mathrm{inn}}$. Assume that $\mu_{\mathrm{inn}}^{\mathbb F}(\phi(x,b))>0$ for a fixed parameter $b \in \mathbb{M}$. By definition of the inner measure there is a formula $\theta_{-}(x,d) \in \mathbb F$ such that $\mathbb{M} \models \forall x \big( \theta_{-}(x,d) \rightarrow \phi(x,b))$ and $\mu^\mathbb F(\theta_{-}(x,d))>0$. Since $\mu^\mathbb F(\theta_{-}(x,d))$ was finitely satisfiable there is some $m \in M$ such that $\mathbb{\U} \models \theta_{-}(m,d)$. Hence $\mathbb{\U} \models \phi(m,b)$, as required. \end{proof}

We conclude this subsection with the folklore result on uniqueness theorem for finite measures due to Dynkin’s Theorem (see \cite[Theorem $3.2$-$3.3$]{billingsley2012}).
\begin{fact}\label{fact:uniquenessmeasures} Let $(X, \mathcal{F})$ be a measurable space and $\mathcal{P} \subseteq \mathcal{F}$. And assume that:
\begin{enumerate}
\item the set $X \in \mathcal{P}$,
\item $\mathcal{P}$ is closed under finite intersections,
\item the $\sigma$-algebra generated by $\mathcal{P}$ is $\mathcal{F}$.
\end{enumerate}
If $\mu$ and $\nu$ are finitely countably additive measures on $\mathcal{F}$ such that $\mu(A)=\nu(A)$ for all $A \in \mathcal{P}$ then $\mu(B)=\nu(B)$ for all $B \in \mathcal{F}$. 
\end{fact}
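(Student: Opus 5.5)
This is the $\pi$--$\lambda$ uniqueness theorem, so I will prove it with Dynkin's $\pi$--$\lambda$ argument. Let
\[\mathcal D=\{B\in\mathcal F \mid \mu(B)=\nu(B)\}.\]
The first step is to check that $\mathcal D$ is a Dynkin system ($\lambda$-system).
\begin{itemize}
\item It contains $X$, because $X\in\mathcal P$ by hypothesis (1).
\item It is closed under proper differences. If $B\subseteq C$ both lie in $\mathcal D$, then finite additivity gives $\mu(C\setminus B)=\mu(C)-\mu(B)$, and likewise for $\nu$. Finiteness of the measures matters here, since it makes the subtraction legitimate; in particular $\mathcal D$ is closed under complements.
\item It is closed under increasing countable unions. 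If $B_1\subseteq B_2\subseteq\cdots$ lie in $\mathcal D$, continuity from below, which follows from countable additivity, gives $\mu(\bigcup B_n)=\lim\mu(B_n)=\lim\nu(B_n)=\nu(\bigcup B_n)$.
\end{itemize}
By hypothesis we also have $\mathcal P\subseteq\mathcal D$.

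The second step applies Dynkin's $\pi$--$\lambda$ theorem. Since $\mathcal P$ is closed under finite intersections (hypothesis (2)), the smallest Dynkin system containing $\mathcal P$ equals $\sigma(\mathcal P)$. By hypothesis (3), $\sigma(\mathcal P)=\mathcal F$. Hence $\mathcal F=\sigma(\mathcal P)\subseteq\mathcal D$, so $\mu$ and $\nu$ agree on all of $\mathcal F$.

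The only substantive ingredient is the $\pi$--$\lambda$ theorem itself; I would simply cite it from \cite{billingsley2012}. If a self-contained proof were wanted, the main obstacle is showing that the Dynkin system $\delta(\mathcal P)$ generated by $\mathcal P$ is closed under intersections. The standard way to do this has two stages:
\begin{itemize}
\item First fix $A\in\mathcal P$. The family $\{B \mid A\cap B\in\delta(\mathcal P)\}$ is a Dynkin system containing $\mathcal P$, so it contains $\delta(\mathcal P)$.
\item Then fix $B\in\delta(\mathcal P)$ and repeat the same argument with the roles exchanged.
\end{itemize}
A Dynkin system closed under finite intersections is a $\sigma$-algebra: closure under complements and finite intersections gives finite unions, and an arbitrary countable union is the increasing union of its finite partial unions. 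Hence $\delta(\mathcal P)\supseteq\sigma(\mathcal P)$, which is what the second step needs.
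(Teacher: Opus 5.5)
Your proposal is correct and matches the paper's approach. The paper gives no argument of its own beyond citing Dynkin's $\pi$--$\lambda$ theorem and the uniqueness theorem in \cite{billingsley2012} (Theorems 3.2--3.3), and your proof is exactly that argument: the family where $\mu$ and $\nu$ agree is a Dynkin system containing the $\pi$-system $\mathcal P$, using finiteness together with $X\in\mathcal P$ for proper differences and countable additivity for increasing unions.
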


\subsection{Some results in \texorpdfstring{$\mathrm{ACVF}$}{ACVF}}

\begin{fact}\label{fact:acvf-stably-embedded-sorts}
In \(\ACVF\), the residue field \(\kres\) is a stably embedded pure algebraically closed field and eliminates imaginaries. The value group \(\Gamma\) is a stably embedded pure divisible ordered abelian group and eliminates imaginaries. In particular, \(\Gamma\) is distal.
\end{fact}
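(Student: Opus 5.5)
This fact collects standard results, so the plan is to assemble it from the literature rather than prove it from scratch. The main tool is quantifier elimination for $\ACVF$ in a three-sorted language. This language has the valued field sort $K$, the residue field sort $\kres$ and the value group sort $\Gamma\cup\{\infty\}$. It carries the ring language on $K$ and $\kres$, the ordered group language on $\Gamma$, the valuation map $\mathrm{val}\colon K\to\Gamma\cup\{\infty\}$, and a residue map $\mathrm{res}\colon K\to\kres$ (sending non-integral elements to $0$). Quantifier elimination in this language is due to Robinson and is stated in this form in \cite{HHM}.

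\emph{Stable embeddedness and purity.} Let $\phi(\gamma_1,\dots,\gamma_n;b)$ be a formula whose variables $\gamma_i$ range over $\Gamma$ and whose parameters $b$ may come from any sort. By quantifier elimination, $\phi$ is equivalent to a Boolean combination of atomic formulas. The atomic formulas that involve the $\gamma_i$ are linear (in)equalities over $\mathbb Z$ between the $\gamma_i$ and terms of the form $\mathrm{val}(f(b))$ with $f$ a polynomial, together with value-group parameters occurring in $b$. Replacing each $\mathrm{val}(f(b))$ by a new $\Gamma$-parameter $d$, we obtain a formula $\psi(\gamma;z)$ in the pure language of ordered divisible abelian groups, depending only on $\phi$, with $\phi(\Gamma^n,b)=\psi(\Gamma^n,d)$. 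This gives both stable embeddedness and the fact that the induced structure on $\Gamma$ is the pure divisible ordered abelian group.

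The residue field is handled in the same way. Atomic formulas in the $\kres$-variables are polynomial equations whose coefficients are terms $\mathrm{res}(f(b)/g(b))$ or residue-field parameters. Substituting these terms by parameters from $\kres$ shows that $\kres$ is stably embedded and carries the pure field structure. It is algebraically closed, being a residue field of a model of $\ACVF$.

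\emph{Elimination of imaginaries and distality.} Once the induced structures are identified as pure, elimination of imaginaries is a statement about $\mathrm{ACF}$ and $\mathrm{DOAG}$:
\begin{enumerate}
\item $\mathrm{ACF}$ eliminates imaginaries by Poizat's theorem.
\item $\mathrm{DOAG}$ is o-minimal, has the constant $0$, and has definable choice (it is an o-minimal expansion of an ordered group). Hence it eliminates imaginaries by the standard o-minimal argument, as in van den Dries' book.
\end{enumerate}
Finally, o-minimal theories are distal (see \cite{simon}), so $\Gamma$ with its induced structure is distal.

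The only step needing care is checking that quantifier elimination leaves no mixed atomic formulas coupling $\Gamma$-variables with $\kres$-variables. This holds because there are no function symbols between $\kres$ and $\Gamma$. It is the point where the precise choice of language matters; the rest is citation.
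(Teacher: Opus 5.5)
Your proposal is correct and follows essentially the paper's route: the paper cites \cite[Propositions 7.1 and 7.2]{HHM}, which are consequences of quantifier elimination, for stable embeddedness and purity of $\kres$ and $\Gamma$, and o-minimality of divisible ordered abelian groups for distality; you additionally sketch the quantifier-elimination argument and make elimination of imaginaries explicit via $\mathrm{ACF}$ and $\mathrm{DOAG}$. One small correction: quantifier elimination fails for a unary residue map on the ring language of $K$ (for instance $\mathrm{res}(x/y)$ with $v(x)=v(y)>0$ is not quantifier-free definable there), so use the binary map $\mathrm{Res}(x,y)$ of \cite{HHM} or add field inversion, which your terms $\mathrm{res}(f(b)/g(b))$ already implicitly assume.
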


\begin{proof}
The stable embeddedness and induced structures on \(\kres\) and \(\Gamma\) are standard consequences of quantifier elimination for \(\ACVF\); see \cite[Proposition $7.1$ and $7.2$]{HHM}. The residue field is a model of \(\mathrm{ACF}\), and the value group is a model of the theory of divisible ordered abelian groups. The value group is distal because divisible ordered abelian groups are o-minimal; see \cite[Corollary $2.30$]{distal}.
\end{proof}

\begin{notation}\label{not:residue-value-prodefinable-maps}
Let \(M\models \ACVF\) be a spherically complete model. For a tuple \(a\) from the valued-field sort, we write
$\mathbf f(a)$
for a pro-definable enumeration of the residue-field part
$\kres(Ma)\colon=\kres(\mathbb{M}) \cap \dcl^{eq}(Ma)$
and
$\mathbf g(a)$
for a pro-definable enumeration of the value-group part
$\Gamma(Ma)\colon=\Gamma(\mathbb{M}) \cap \dcl^{eq}(Ma).$
Equivalently, \(\mathbf f\) and \(\mathbf g\) are \(M\)-pro-definable maps which record, respectively, the residue-field and value-group data generated by \(a\) over \(M\). If \(p\) is a global \(M\)-invariant type, we write
$p^{st}:=\mathbf f_*p$
for its residue-field projection, and
$p^\Gamma:=\mathbf g_*p$
for its value-group projection. Note that both $p^{st}$ and $p^{\Gamma}$ are still $M$-invariant.
\end{notation}

\begin{fact}\label{fact:implication-from-residue-value-restrictions} Let $M \models \mathrm{ACVF}$ be a spherically complete model. 
Let \(p\) be a global \(M\)-invariant type in the valued-field sort, and let \(b\in K(\U)\). Let \(\mathbf f\) and \(\mathbf g\) denote the residue-field and value-group projection maps from \cref{not:residue-value-prodefinable-maps}. If \(a\models p\rest_M\), then
$a\models p\rest_{Mb}$
if and only if
$\mathbf f(a)\models p^{st}\rest_{Mb}$ and $
\mathbf g(a)\models p^\Gamma\rest_{Mb}.$
Equivalently, $\p\rest_{M} \cup \p^{st} \rest_{Mb} \cup \p^{\Gamma}\rest_{Mb} \vdash \p \rest_{Mb}$. 
\end{fact}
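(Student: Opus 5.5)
The forward direction is immediate. Since $\mathbf f$ and $\mathbf g$ are $M$-pro-definable, if $a\models p\rest_{Mb}$ then every $Mb$-formula satisfied by $\mathbf f(a)$ (resp.\ $\mathbf g(a)$) pulls back to an $Mb$-formula satisfied by $a$. That pulled-back formula lies in $p$, so its image lies in $\mathbf f_*p=p^{st}$ (resp.\ $\mathbf g_*p=p^\Gamma$). Hence $\mathbf f(a)\models p^{st}\rest_{Mb}$ and $\mathbf g(a)\models p^\Gamma\rest_{Mb}$.

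For the converse, I would fix $a'\models p\rest_{Mb}$ and aim to show $a\equiv_{Mb}a'$. I plan three steps.

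\emph{Step 1.} From $a\models p\rest_M$ we get $a\equiv_M a'$, so, after fixing compatible enumerations, $\mathbf f(a)\equiv_M\mathbf f(a')$ and $\mathbf g(a)\equiv_M\mathbf g(a')$. By the forward direction applied to $a'$, together with the hypothesis on $a$, we also have $\mathbf f(a)\equiv_{Mb}\mathbf f(a')$ and $\mathbf g(a)\equiv_{Mb}\mathbf g(a')$.

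\emph{Step 2.} I want to upgrade this to the joint statement $\mathbf f(a)\mathbf g(a)\equiv_{Mb}\mathbf f(a')\mathbf g(a')$. For this I would use \cref{fact:acvf-stably-embedded-sorts}: $\kres$ and $\Gamma$ are stably embedded, and they are orthogonal, since the induced structure on $\kres\times\Gamma$ over any parameter set is just the product of the two pure structures. Concretely, by stable embeddedness every $Mb$-definable subset of $\kres^n\times\Gamma^m$ is defined with parameters from $\kres(Mb)\cup\Gamma(Mb)$. By quantifier elimination for $\ACVF$ in the $\kres$–$\Gamma$ part, such a set is a finite Boolean combination of rectangles $D\times E$, with $D$ a $\kres(Mb)$-definable set and $E$ a $\Gamma(Mb)$-definable set. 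Hence the separate types over $Mb$ determine the joint type.

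\emph{Step 3.} Now I invoke the domination theorem over a spherically complete base, \cite[Theorem 12.15]{HHM}. It says that $\tp(a/M)$ together with $\tp(\mathbf f(a)\mathbf g(a)/Mb)$, i.e.\ the $\kres$ and $\Gamma$ data generated by $a$ over $M$, implies $\tp(a/Mb)$. One checks that it applies here: $\tp(a/M)=\tp(a'/M)$, and since $a\models p\rest_{Mb}$-type data coincide, Step 2 gives equality of the joint $\kres\Gamma$-types over $Mb$. Therefore $a\equiv_{Mb}a'$, i.e.\ $a\models p\rest_{Mb}$. The "equivalently" clause is just a restatement of this implication.

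The main obstacle is Step 3: matching the exact hypotheses of \cite[Theorem 12.15]{HHM}. That result is phrased via $\kres$- and $\Gamma$-domination of $\tp(a/M)$ for maximally complete $M$, and one must check that the pro-definable enumerations $\mathbf f,\mathbf g$ capture all of $\kres(Ma)$ and $\Gamma(Ma)$ as required there. If only $\Gamma$-domination for the $\Gamma$-part is available in that form, I would first reduce to $\tp(a/M\,\Gamma(Ma))$ being stably dominated via $\mathbf f$, and then combine the two dominations.
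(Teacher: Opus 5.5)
\textbf{The gap is in Step 3.} Your forward direction and Steps 1--2 are fine. The problem is that the domination result you invoke is not an unconditional implication. As you state it, $\tp(a/M)\cup\tp(\mathbf f(a)\mathbf g(a)/Mb)\vdash\tp(a/Mb)$ for spherically complete $M$, it is false. Here is a counterexample:
\begin{itemize}
\item Take $b\in\mathcal O$ with $t=\mathrm{res}(b)\notin\kres(M)$.
\item Take $a,a'$ with $v(a-b)=\gamma_1\neq\gamma_2=v(a'-b)$, where $\gamma_1,\gamma_2\in\Gamma(M)_{>0}$.
\item Both $a$ and $a'$ realize the generic type of $\mathcal O$ over $M$, with $\mathrm{res}(a)=\mathrm{res}(a')=t$.
\item On that type, every $M$-definable map into $\kres$ is a function of $\mathrm{res}(x)$, and every $M$-definable map into $\Gamma$ is constant in $\Gamma(M)$. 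Hence $\mathbf f(a)=\mathbf f(a')$ and $\mathbf g(a)=\mathbf g(a')$.
\item Yet $\tp(a/Mb)\neq\tp(a'/Mb)$.
\end{itemize}
The results of \cite{HHM} that actually apply carry an independence hypothesis; the paper uses Corollary 12.11(i). That hypothesis is that $\kres(Ma)$ and $\kres(Mb)$ are linearly disjoint over $\kres(M)$, and that $\Gamma(Ma)\cap\Gamma(Mb)=\Gamma(M)$. You flag "matching the hypotheses" as the obstacle, but you look at the wrong thing: whether $\mathbf f,\mathbf g$ enumerate everything, rather than independence. A clear symptom is that your argument never uses that $p$ is $M$-invariant, so it would prove the Fact for an arbitrary global type. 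The same example refutes that stronger claim: take $p$ any global type containing $\mathrm{res}(x)=t$ and $v(x-b)=\gamma_1$, and test $a'$.

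\textbf{The missing step is where invariance enters, as in the paper's proof.}
\begin{itemize}
\item Residue field: $p^{st}$ is an $M$-invariant global type in the stable, stably embedded $\St_M$. So $\mathbf f(a)\models p^{st}\rest_{Mb}$ makes $\kres(Ma)$ independent from $Mb$ over $M$. Hence $\kres(Ma)$ and $\kres(Mb)$ are algebraically independent over the algebraically closed field $\kres(M)$, and therefore linearly disjoint.
\item Value group: $p^\Gamma$ is $M$-invariant, so $\mathbf g(a)\models p^\Gamma\rest_{Mb}$ forces $\Gamma(Ma)\cap\Gamma(Mb)=\Gamma(M)$. If a coordinate $h(a)$ equaled some $\gamma\in\Gamma(Mb)$, the formula $h(x)=\gamma$ would lie in $p^\Gamma$. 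Invariance would then force $\gamma$ to be fixed by $\mathrm{Aut}(\U/M)$, so $\gamma\in\Gamma(M)$.
\end{itemize}
Once these hypotheses are checked (they hold equally for $a$ and $a'$), \cite[Corollary 12.11(i)]{HHM} gives $\tp(a/M)\cup\tp(\kres(Ma)/\kres(Mb))\cup\tp(\Gamma(Ma)/\Gamma(Mb))\vdash\tp(a/Mb)$ directly. With that route, the auxiliary $a'$ and the joint-type argument of Step 2 are not needed.
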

\begin{proof}
The direction $a \models \p \rest_{M b}$ implies that $\mathbf{f}(a) \models \p^{st}\rest_{Mb}$ and $\mathbf{g}(a) \models \p^{\Gamma} \rest_{Mb}$ is immediate. For the converse, assume $\mathbf{f}(a)=\mathbf{k}(Ma) \models \p^{st} \rest_{Mb}$. In particular, $\mathbf{k}(Ma)\downfree_{M} Mb$ hence $\mathbf{k}(Ma) \downfree_{\mathbf{k}(M)} \mathbf{k}(Mb)$ (\emph{c.f.} \cite[Lemma $2.6$]{orthogonal}). Hence, $\mathbf{k}(Ma)$ and $ \mathbf{k}(Mb)$ are algebraically independent over $\mathbf{k}(M)$.  Since $\mathbf{k}(M)$ is algebraically closed, the residue field $\mathbf{k}(Ma)$ is linearly disjoint from $\mathbf{k}(Mb)$ over $\mathbf{k}(M)$. Similarly,  $\mathbf{g}(a) \models \p^{\Gamma} \rest_{Mb}$ implies $\Gamma(Ma) \downfree_{\Gamma(M)} \Gamma(Mb)$. In particular, $\Gamma(Ma) \cap \Gamma(Mb)=\Gamma(M)$. (\emph{e.g.} by \cite[Remark $3.1$ and Fact $2.5$]{Vic-ResDom}). By \cite[Corollary 12.11]{HHM}.(i) (or \cite[Proposition 3.3]{Vic-ResDom}), 
\[\tp(a/M) \cup \tp(\mathbf{k}(Ma)/ \mathbf{k}(M b)) \cup \tp(\Gamma(Ma)/ \Gamma(Mb)) \vdash \tp(a/Mb)\]
Hence $\p\rest_{M} \cup \p^{st} \rest_{Mb} \cup \p^{\Gamma}\rest_{Mb} \vdash \p \rest_{Mb}$. \\
\end{proof}
We will apply in particular the following lemma for $T=\mathrm{ACVF}$ and $\Gamma$ the value group, which is stably embedded by \cref{fact:acvf-stably-embedded-sorts}.

\begin{lemma}
\label{lem:finite-gamma-normal-form} Let $T$ be $\mathrm{NIP}$ and $M \models T$. Let $\Gamma$ be a $\mathcal{L}_{\Gamma}$-structure. Assume that $\Gamma$ is a $\emptyset$-definable set in $M$ that is stably embedded, eliminates imaginaries and its induced structure is the $\mathcal{L}_{\Gamma}$-structure. Let $X$ be an $M$-definable set, and let
$\varphi(x;y)$ be an $\mathcal L(M)$-formula, where $x$ ranges over $X$ and $y$ ranges over $\Gamma^m$. Then there exist an $M$-definable set $S\subseteq\Gamma^r,$ an $M$-definable map $g:X\mapsto S,$
and a $\mathcal{L}_{\Gamma}$-formula
$\theta(u;y)$, with
parameters from $\Gamma(M)$, such that
$T\models
\forall x\in X\,\forall y\in\Gamma^m\,
\bigl(
\varphi(x;y)
\leftrightarrow
\theta(g(x);y)
\bigr).$
\end{lemma}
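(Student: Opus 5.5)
The plan is to use stable embeddedness of $\Gamma$ uniformly in $x$, then compactness, and finally elimination of imaginaries in $\Gamma$ to turn a uniform family of canonical parameters into a single definable map.

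\emph{Step 1: uniform definitions of the traces.} For each $a\in X$, the trace $\varphi(a;\Gamma^m)$ is a subset of $\Gamma^m$ defined with parameters. By stable embeddedness (\cref{def:stable-set-stably-embedded}), there is a formula $\psi(y;z)$, depending only on $\varphi$, with
\[
\varphi(a;\Gamma^m)=\psi(\Gamma^m;d)
\]
for some $d\in\Gamma^z$. Since $\Gamma$'s induced structure is the $\mathcal L_\Gamma$-structure, we may take $\psi$ to be an $\mathcal L_\Gamma$-formula. If stable embeddedness only yields finitely many candidate formulas, or formulas with parameters from $M$, we would use compactness in $\U$ to combine them into one $\mathcal L_\Gamma$-formula $\psi(y;z,i)$, using a case index $i$ coded by finitely many constants of $\Gamma(M)$. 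Any extra parameters from $\Gamma(M)$ get absorbed into $\theta$.

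\emph{Step 2: canonical parameters.} Consider the $\mathcal L_\Gamma$-definable equivalence relation on parameters
\[
E(z,z')\;:\Longleftrightarrow\;\forall y\in\Gamma^m\,\bigl(\psi(y;z)\leftrightarrow\psi(y;z')\bigr).
\]
Because $\Gamma$ eliminates imaginaries, there is an $\mathcal L_\Gamma$-definable (over $\Gamma(M)$, or $\emptyset$) function $c:\Gamma^z\to\Gamma^r$ such that $E(z,z')\iff c(z)=c(z')$. Set $S:=c(\Gamma^z)$. Then the formula
\[
\theta(u;y):=\exists z\,\bigl(c(z)=u\wedge\psi(y;z)\bigr)
\]
is an $\mathcal L_\Gamma$-formula, and whenever $u=c(d)$ we have $\theta(u;\Gamma^m)=\psi(\Gamma^m;d)$.

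\emph{Step 3: the map $g$.} Define $g(a)$ to be the unique $u\in S$ such that $\theta(u;\Gamma^m)=\varphi(a;\Gamma^m)$. Uniqueness holds because $c$ separates $E$-classes. The graph of $g$ is defined by
\[
u\in S\wedge\forall y\in\Gamma^m\,\bigl(\varphi(x;y)\leftrightarrow\theta(u;y)\bigr),
\]
which is $M$-definable. Totality on $X$ follows from Step 1. The required equivalence $\varphi(x;y)\leftrightarrow\theta(g(x);y)$ then holds by construction, in $\U$ and hence as a consequence of $T$ together with the $M$-parameters.

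The main obstacle is Step 1's uniformity, namely passing from pointwise definability of each trace to a single $\psi$ over the parameter set with only $\mathcal L_\Gamma$ symbols. Here I would invoke the stated form of stable embeddedness, which already gives $\psi$ depending only on $\varphi$, plus the hypothesis on the induced structure. NIP is not really needed beyond what this form of stable embeddedness already provides.
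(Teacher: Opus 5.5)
Your proposal is correct and follows essentially the same route as the paper. Stable embeddedness gives a uniform $\mathcal L_\Gamma$-definition $\psi(y;z)$ of the traces $\varphi(a;\Gamma^m)$; elimination of imaginaries in $\Gamma$ replaces the parameter by a canonical code lying in an $M$-definable $S\subseteq\Gamma^r$; and $g$ sends $a$ to the code of its trace. Your version is somewhat more explicit than the paper's: you build $\theta(u;y)=\exists z\,(c(z)=u\wedge\psi(y;z))$ from the coding function and define the graph of $g$ by a single $\mathcal L(M)$-formula, so the paper's separate appeal to compactness for uniformity is absorbed into the uniform form of stable embeddedness. You are also right that NIP plays no role.
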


\begin{proof}
Consider the $M$-definable equivalence relation on $X$ given by
$a\sim a'$ if and only if $\forall y\in\Gamma^m\,
\bigl(
\varphi(a;y)\leftrightarrow\varphi(a';y)
\bigr).$ 
Thus two elements of $X$ are equivalent precisely when they define
the same $\phi$-subset of $\Gamma^m$.
Since $\Gamma$ is stably embedded  and it eliminates imaginaries, there is a $\mathcal{L}_{\Gamma}$-formula $\theta(y;u)$, depending only on $\varphi$, such that
each set $\varphi(a;\Gamma^m)$ is of the form $\theta(\Gamma^m;c_{a})$ for some tuple $c_{a}$ from $\Gamma \cap \dcl^{eq}(Ma)$. Indeed, elimination of imaginaries allows one to take the
canonical parameter of the set $\Theta(\Gamma^m;c)$  in a $M$-definable subset $S$ of a finite Cartesian power of $\Gamma$. By compactness, we can make such coice of canonical parameters uniform, and we obtain a map $g\colon X \to S$ where each element $a$ is sent to the canonical parameter $c_{a}$, such function clearly factors through the quotient $X/\sim$. By construction, $g(a)$ is the canonical parameter of the subset $\Theta(g(a);\Gamma^{m})$, which coincides with 
$\varphi(a;\Gamma^m)$. Therefore
$\varphi(a;y)\leftrightarrow\theta(g(a);y)$
for every $a\in X$ and $y\in\Gamma^m$, which proves the main statement. 
\end{proof}
We will be interested in particular in the stable part in $\mathrm{ACVF}$, a more algebraic description of this part of the structure is given in \cite[Propositions~7.7 and~7.8]{HHM}. 

\section{Stably dominated measures and orthogonality}
In this section we give definitions that generalize the notions of stable domination and orthogonality to a stably embedded sort for global Keisler measures. We state the definitions in an arbitrary theory, and state the first theorem characterizing measures in $\mathrm{ACVF}$. Through this section we work with $T$ an arbitrary $\mathcal{L}$-first order theory, $\A=\acl^{eq}(\A)$ and $B$ denote small sets of parameters, $M$ is a small model and $\mathbb{M}$ is the monster model.

\begin{definition}\label{def:stablebooleanalgebra} Let $B \supseteq A=\acl^{eq}(A)$ be a small set of parameters.
\begin{enumerate}
\item A \emph{special pushforward stable formula} is a formula of the form: $\psi(x,a) \wedge \alpha(f(x),d)$, where $\psi(x,a)$ is a $\mathcal{L}(A)$-formula, $f: \psi(\M,a)\to D$ is an $A$-definable map, $D$ is a finite product of sorts in $\mathrm{St}_{A}$ and $\alpha(u,d)$ is a formula in the stably embedded structure $\mathrm{St}_{A}$ where $d \in \mathrm{St}_{A}(B)$.
\item The \emph{stable Boolean algebra over $A$ with parameters from $B$,} denoted $\defst_A(B),$ is the Boolean sub-algebra of $\mathbb B_x(B)$ generated by special pushforward stable formulas. 
\item If $\mu$ is a global Keisler measure, we write
$\mu_{\mathrm{st},A}
:=
\mu\rest_{\defst_A(\U)}.$ When the base is fixed, we simply write $\mu_{\mathrm{st}}$.
\end{enumerate}
\end{definition}

Notice that by stable embeddedness of $\St_A$ (\emph{c.f.} \cref{fact: stablepartproperties}), we could have allowed $d$ to be any tuple from $B$. So (in Keisler's view) $\defst_A(B)$ is the expansion 
of $\mathbb B_x(A)$ to formulas with parameters in $\mathrm{St}_{A}(B)\colon =\mathrm{St}_{A} \cap \dcl^{eq}(B)$ where the variable belongs to a finite product of sorts of $\St_A$.\\
We now specialize the abstract approximation formalism introduced in
Definition~\ref{def:F-approximation} to the stable Boolean algebra.

\begin{definition}
\label{def:stable-approximation-and-domination}
Let $\mu$ be a measure on $\mathbb B_x(B)$ for $B\supseteq A$.

\begin{enumerate}
\item We say that $\mu$ is \emph{stably approximated over $A$} if
$\mu$ is $\defst_A(B)$-approximated in the sense of
Definition~\ref{def:F-approximation}.

\item If $\mu$ is a global Keisler measure, we say that $\mu$ is \emph{stably dominated over $A$} if
$\mu_{\mathrm{st},A}$ has a unique extension to the full Boolean
algebra $\mathbb B_x(\U)$, and this unique extension is $\mu$.
\end{enumerate}
Note that the notions in $(1)$ and $(2)$ are equivalent by \cref{lem:unique-extension-criterion}. 
\end{definition}
\begin{definition}\label{def:thestablepushforward} Let $\mu$ be a global Keisler measure and $A=\acl^{eq}(A)$ be a small algebraically closed base of imaginary parameters. Let $\psi(x,a)$ be a $\mathcal{L}(A)$-formula and let $f$ be an $A$-definable map  $f: \psi(x,a) \rightarrow D$, where $D$ is a finite product of sorts in $\mathrm{St}_{A}$. Let $\nu$ be the localized measure of $\mu$ on the definable set $\psi(x,a)$. We define the \emph{stable push-forward of $\mu$ on $\psi(x,a)$ along $f$} to be the pushforward measure of $\nu$ along $f$, which we denote by $\mu_{f}$. More explicitly, for $\alpha(u,d)$ a formula within the stable structure $\mathrm{St}_{M}(\mathbb{M})$, where $d \in \mathrm{St}_{M}(\mathbb{M})$ and $u$ is a variable varying in $f(\psi(x,a)) \subseteq \mathrm{St}_{M}(\mathbb{M})$, we define:
\[ \mu_{f} \big( \alpha(u,d)\big)= \nu\big(\alpha(f(x),d)\big)=\mu \big( \psi(x,a) \wedge \alpha(f(x),d)\big).\] 
\end{definition}
\begin{remark}\label{rmk:thestablepushforward} The stable pushforward of $\mu$ on $\psi(x,a)$ along $f$ defined in \cref{def:thestablepushforward} is a Keisler measure on a stable and stably embedded $A$-definable set $D$, centered on the definable subsets of $f(\psi(x,a)) \subseteq D$. Since $\mu$ is $A$-invariant, $\psi(x,a)$ is a $\mathcal{L}(A)$-formula and $f$ is $A$-definable then the pushforward of $\mu$ on $\psi(x,a)$ along $f$ is also $A$-invariant. In particular it is generically stable over $A$ so it is definable and finitely satisfiable over $A$ (\emph{c.f.} \cref{thm:genstable}). 
\end{remark}

\begin{lemma}\label{lem: babycontinuity} Let $A=\acl^{eq}(A)$ and  $\mu$ be a global $A$-invariant Keisler measure. Let $\psi(x,a)$ be a $\mathcal{L}(A)$-formula, $f: \psi(x,a) \to D$ where $D$ is a finite product of sorts in $\mathrm{St}_{A}$. Let $\alpha(u,z)$ be a $\mathcal{L}(A)$-formula in the stable structure $\mathrm{St}_{A}(\mathbb{M})$. 
Then:
\begin{enumerate}
\item for every $r \in [0,1]$, then set \[O:=\{ q \in S_{yz}(A) \ | \ \mu(\psi(x,a) \wedge \alpha(f(x),d')) < r \ \text{for any $(a,d')\in \mathbb{M}$ with $(a,d') \models q$} \}\]
is open. 
\item Furthermore, if the base $A$ is a model $M$, then given $d \in \mathrm{St}_{M}(\mathbb{M})$ a fixed parameter, if $\mu(\psi(x,a) \wedge \alpha(f(x),d))>0$ then there is $m \in M$ such that $\mathbb{M} \models \psi(m,a) \wedge \alpha(f(m),d)$.
\end{enumerate}
\end{lemma}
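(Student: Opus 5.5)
The plan is to reduce both items to the corresponding properties of the stable pushforward $\mu_f$ from \cref{def:thestablepushforward}, using \cref{rmk:thestablepushforward}. By that remark, $\mu_f$ is an $A$-invariant Keisler measure on the stable, stably embedded set $D$. Hence it is generically stable, and by \cref{thm:genstable} it is definable over $A$ and finitely satisfiable in a small model containing $A$.

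The first subtlety is that in (1) the formula $\psi(x,a)$ and the map $f$ are over $A$, yet $a$ is allowed to vary with $q$. For the variable-$a$ case I would handle the parameter directly. For each $a'$ realizing the $y$-part of $q$, the tuple $a'$ is $A$-conjugate to $a$, so $\psi(x,a')$ and the map $f_{a'}$ (the conjugate of $f$) are again over $A$. I would then treat the combined formula $\chi(x;y,z):=\psi(x,y)\wedge\alpha(f_y(x),z)$ as a single formula. If the paper intends $a$ to be fixed, the argument only simplifies.

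For (1), fix $q_0=\tp(a,d/A)\in O$, so that $\mu_f(\alpha(u,d))<r$. By definability of $\mu_f$ over $A$, there is an $A$-formula $\eta(z)\in\tp(d/A)$ such that $\mu_f(\alpha(u,d'))<r$ for all $d'\models\eta$. Then $\eta(z)\wedge y=a$, or the corresponding formula in $\tp(a/A)$, gives an open neighbourhood of $q_0$ contained in $O$. Since $\mu(\psi(x,a)\wedge\alpha(f(x),d'))=\mu_f(\alpha(u,d'))$ by definition, this shows $O$ is open. To be careful about the quantifier over all realizations, I will use $A$-invariance of $\mu$: the value $\mu(\chi(x;a',d'))$ depends only on $\tp(a'd'/A)$. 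So the definability set $\{q:\mu(\chi(x;q))<r\}$ is exactly $O$, and its openness follows from definability of the measure $\mu$ restricted to the Boolean algebra generated by such $\chi$. That definability in turn follows from definability of each $\mu_{f_{a'}}$, which is uniform because the $\mu_{f_{a'}}$ are $A$-conjugate.

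For (2), $\mu_f(\alpha(u,d))>0$. The main obstacle is that \cref{thm:genstable} only gives finite satisfiability in \emph{some} small model, not in $M$ itself. I would try to get around this using stability. In the stable structure $\mathrm{St}_M$, an $M$-invariant measure is a convex combination (integral) of global $M$-invariant types, and in a stable theory $M$-invariant types over a model are definable over $M$ and hence finitely satisfiable in $M$; the measure then inherits finite satisfiability in $M$. This gives some $c\in f(\psi(M,a))\cap\alpha(M,d)$; here $c$ ranges over $\mathrm{St}_M(M)=\dcl^{eq}(M)\cap\mathrm{St}_M$.

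To pull $c$ back to a point $m\in M$ with $f(m)=c$, I would apply finite satisfiability to the set $\psi(x,a)\wedge\alpha(f(x),d)$ viewed through $\mu_f$ on the image. Concretely, the formula $\exists x(\psi(x,a)\wedge f(x)=u)\wedge\alpha(u,d)$ has positive $\mu_f$-measure, so it is realized by some $c\in M$. Since $M$ is a model and $a,c\in M$, the existential is witnessed by some $m\in M$, and that $m$ satisfies $\psi(m,a)\wedge\alpha(f(m),d)$.
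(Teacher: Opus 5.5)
Your proposal is correct and follows the paper's proof: (1) comes from definability of $\mu_f$ over $A$ via the neighbourhood $y=a\wedge\eta(z)$, and (2) from finite satisfiability of $\mu_f$ in $\mathrm{St}_M(M)$, followed by pulling the witness back into $M$ through $f$. You are more careful than the paper at two points: you justify finite satisfiability in $M$ itself via stability (types in the support of $\mu_f$ are $M$-invariant, hence coheirs), and you apply it to $\alpha(u,d)\wedge\exists x\,(\psi(x,a)\wedge f(x)=u)$ so that the witness lies in the image of $f$, where the paper only asserts surjectivity; your digression on a varying $a$ is unnecessary, since $a\in A$ forces $y=a$.
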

\begin{proof}Let $q_{0}=\tp(a,d/M)$, where $d \in \mathrm{St}_{A}(\mathbb{M})$. Let $\mu_{f}$ be the pushfoward measure of $\mu$ on $\psi(x,a)$ along $f$, as in \cref{def:thestablepushforward}. By \cref{rmk:thestablepushforward} such pushforward is $A$-invariant and generically stable over $A$, so it is $A$-definable. In particular, for a fixed $r \in [0,1]$ the set $\{ p \in S_{z}(A) \ |\ \mu_{f}(\alpha(u,d'))<r \ \text{for any $d' \models p$} \ \}$ is an open set, so there is a formula $\delta(z) \in \tp(d/A)$ such that for every $d' \in \mathrm{St}_{M}(\mathbb{M})$ we have  $\mu_{f}(\alpha(u,d'))<r$ whenever $\mathbb{M}\models \delta(d')$. Let $\eta(y,z)\colon = \big(y=a \wedge \delta(z)\big)$.  Then $\eta(y,z) \in q_{0}$ and by the definition of the stable pushforward $\mu\big( \psi(x,a) \wedge \alpha(f(x),d')\big)=\mu_{f}\big(\alpha(u,d')\big)< r$ for any $\mathbb{M} \models \eta(a',d')$. 
Thus $q_{0} \in [\eta(y,z)]$ and $O$ is an open set. \\

For the second part of the statement, note that by definition of the stable pushforward of $\mu$ on $\psi(x,a)$ along $f$, we have that $\mu_{f}(\alpha(u,d))=\mu(\psi(x,a) \wedge \alpha(f(x),d))>0$. By \cref{rmk:thestablepushforward} the stable pushforward of $\mu$ on $\psi(x,a)$ along $f$ is finitely satisfiable over $\mathrm{St}_{M}(M)\preceq \mathrm{St}_{M}(\mathbb{M})$). Hence, there is some $n_{0} \in \mathrm{St}_{M}(M)$ such that $\mathbb{M}\models \alpha(n_{0},d)$.  Since $f: \psi(x,a) \rightarrow f(\psi(x,a)) \subseteq D \subseteq \mathrm{St}_{A}(\mathbb{M})$, and $\alpha(u,d)$ is a definable subset of $f(\psi(x,a))$, we have that $f$ is surjective. Take $m_{0} \in M$ be such that $f(m_{0})=n_{0}$, hence $\mathbb{M} \models \psi(m_{0},a) \wedge \alpha(f(m_{0}),d)$, as required. 
\end{proof}

\begin{proposition}\label{prop:stableapproxthengenstable} Let $T$ be a $\mathrm{NIP}$-theory. Let $M$ be a small model and $\mu$ a global $M$-invariant Keisler measure. If $\mu$ is stably dominated over $M$ then $\mu$ is generically stable. 
\end{proposition}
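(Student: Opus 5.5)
We use characterization (1) of \cref{thm:genstable}: it suffices to show that $\mu$ is definable over $M$ and finitely satisfiable in $M$. Set $\mathbb F:=\defst_M(\U)$ and $\mu^{\mathbb F}:=\mu_{\mathrm{st},M}$. Since $\mu$ is stably dominated over $M$, by \cref{def:stable-approximation-and-domination} and \cref{lem:unique-extension-criterion} it is $\mathbb F$-approximated. Moreover $\mathbb F\supseteq\mathbb B_x(M)$, since every $\mathcal L(M)$-formula $\psi(x,a)$ is itself a special pushforward stable formula (take $\alpha$ trivial). So \cref{prop: inner and outer measures}(3) applies with $A=M$. It then reduces the problem to showing that the restriction $\mu^{\mathbb F}$ is $M$-definable and finitely satisfiable in $M$.

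\textbf{Finite satisfiability of $\mu^{\mathbb F}$.} Let $\chi(x)\in\mathbb F$ with $\mu(\chi)>0$. Write $\chi$ as a finite disjunction of conjunctions of special pushforward stable formulas and their negations. First reduce to the case of a single special pushforward stable formula. To do this, partition by finitely many $\mathcal L(M)$-formulas $\psi_i$ and combine the finitely many maps $f_i$ into one $M$-definable map $f=(f_1,\dots,f_k)$ into a product of sorts of $\mathrm{St}_M$. Negations of $\alpha(f(x),d)$ are again of this form, since $\neg\alpha$ is a formula of $\mathrm{St}_M$, and Boolean combinations of the $\alpha$'s are likewise formulas of $\mathrm{St}_M$ in the combined variable. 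Thus every element of $\mathbb F$ is a finite disjunction of formulas $\psi(x,a)\wedge\alpha(f(x),d)$ with $\psi$ over $M$ and $d\in\mathrm{St}_M(\U)$. If the disjunction has positive measure, some disjunct does. \cref{lem: babycontinuity}(2) then gives $m\in M$ satisfying that disjunct, hence $\chi$.

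\textbf{Definability of $\mu^{\mathbb F}$.} Use the same normal form and refine it to a disjoint union of formulas $\psi_i(x)\wedge\alpha_i(f(x),d)$ with the $\psi_i$ pairwise disjoint $M$-formulas. The measure of the union is then the sum of the measures of the pieces. By \cref{lem: babycontinuity}(1), each map $\tp(d/M)\mapsto\mu(\psi_i\wedge\alpha_i(f(x),d))$ satisfies the openness condition for $<r$. Applying the same lemma to $\neg\alpha_i$ gives the condition for $>r$, since the measure of $\psi_i\wedge\neg\alpha_i$ equals $\mu(\psi_i)$ minus the original. So each piece is continuous in $\tp(d/M)$, and a finite sum of continuous functions is continuous. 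This gives openness of $\{q: \mu^{\mathbb F}(\chi(x,d))<r\}$ as a parametrized family, in the sense required by \cref{prop: inner and outer measures}(1).

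The expected obstacle is bookkeeping. One must check that $\mathbb F$-formulas can be uniformly put in this disjoint normal form with parameters $d$ varying over a type, so that definability is stated correctly for the varying parameters. One must also check that $M$-invariance of $\mu$ makes the measure depend only on $\tp(d/M)$. Once this is done, \cref{prop: inner and outer measures}(3) yields that $\mu$ is definable over $M$ and finitely satisfiable in $M$, hence generically stable by \cref{thm:genstable}.
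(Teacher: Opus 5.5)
Your proposal is correct and follows the paper's own one-line proof: stable domination gives $\defst_M(\U)$-approximation, \cref{prop: inner and outer measures} transfers definability and finite satisfiability from $\mu_{\mathrm{st},M}$ to $\mu$, \cref{lem: babycontinuity} supplies these properties for $\mu_{\mathrm{st},M}$, and \cref{thm:genstable} concludes. Your reduction of arbitrary elements of $\defst_M(\U)$ to (disjoint) finite unions of special pushforward stable formulas just makes explicit the bookkeeping the paper leaves implicit, since \cref{lem: babycontinuity} is stated only for a single such formula.
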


\begin{proof}
By \cref{prop: inner and outer measures}, the result follows from \cref{lem: babycontinuity}. 
\end{proof}

\begin{definition}\label{def:measure-orthogonal-gamma}
Let $\mu$ be a global $A$-invariant Keisler measure, and let $\Gamma$ be a stably embedded set. We say that
$\mu$ is \emph{orthogonal to $\Gamma$} if, for every finite
tuple
$
b\in\Gamma(\U)^n,
$
the measure $\mu\rest_A$ has a unique extension to $Ab$.
\end{definition}
We can now formula the statement of the first theorem. 
\begin{theorem}
\label{thm:main-generically-stable-measures-acvf}
Let $M$ be a spherically complete model of $\mathrm{ACVF}$, and let $\mu$ be a global $M$-invariant
Keisler measure. The following are equivalent:
\begin{enumerate}
\item $\mu$ is stably approximated over $M$/ $\mu$ is stably dominated over $M$,
\item $\mu$ is generically stable over $M$;
\item $\mu$ is orthogonal to the value group over $M$.
\end{enumerate}
\end{theorem}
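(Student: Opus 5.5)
We prove the cycle $(1)\Rightarrow(2)\Rightarrow(3)\Rightarrow(1)$. The two formulations in $(1)$ agree by \cref{lem:unique-extension-criterion} applied to $\mathbb F=\defst_M(\U)$, as noted after \cref{def:stable-approximation-and-domination}.

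\textbf{$(1)\Rightarrow(2)$.} This is exactly \cref{prop:stableapproxthengenstable}, since $\mathrm{ACVF}$ is NIP. By \cref{lem: babycontinuity}, $\mu_{\mathrm{st}}$ is $M$-definable and finitely satisfiable in $M$, because each generator comes from a generically stable stable pushforward. \cref{prop: inner and outer measures}(3) then transfers both properties to $\mu$, and \cref{thm:genstable}(1) gives generic stability.

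\textbf{$(2)\Rightarrow(3)$.} Fix $e\in\Gamma(\U)^n$ and a formula $\varphi(x,e)$ over $Me$. By \cref{lem:finite-gamma-normal-form} we may write $\varphi(x,e)\leftrightarrow\theta(g(x),e)$, where $g$ is an $M$-definable map into $\Gamma^r$ and $\theta$ is an $\mathcal L_\Gamma$-formula over $\Gamma(M)$. Then $g_*\mu$ is a global $M$-invariant measure on $\Gamma^r$, and it is generically stable, since definability and finite satisfiability pass to pushforwards. Because $\Gamma$ is o-minimal and hence distal (\cref{fact:acvf-stably-embedded-sorts}), generically stable measures in $\Gamma$ are smooth; concretely we would use strong honest definitions, following \cite[Theorem 9.21]{simon}. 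This yields $M$-definable sets $\theta_\pm(u)$ with $\theta_-\subseteq\theta(u,e)\subseteq\theta_+$ and $g_*\mu(\theta_+\setminus\theta_-)<\epsilon$. Pulling back along $g$ shows that $\varphi(x,e)$ is $\mathbb B_x(M)$-approximated. The same argument applies to any finite Boolean combination, with parameters $Me$. By \cref{lem:unique-extension-criterion} with $\mathbb F=\mathbb B_x(M)$, the extension of $\mu\rest_M$ to $Me$ is unique. The delicate point is that the sandwiching must hold for every extension, so it has to be realized by actual inclusions in $\U$; the honest-definition argument provides exactly this.

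\textbf{$(3)\Rightarrow(1)$.} This is the main obstacle. Fix $\varphi(x,b)$ with $b\in K(\U)$. We want to show it is $\defst_M(\U)$-approximated, and we work on $\operatorname{Supp}(\mu\rest_{Mb})$. For a type $p$ in the support, let $\hat p$ be its $M$-invariant extension. By \cref{fact:implication-from-residue-value-restrictions},
\[
p\rest_M\cup p^{st}\rest_{Mb}\cup p^\Gamma\rest_{Mb}\vdash p\rest_{Mb}.
\]
Here $p^{st}\rest_{Mb}$ is expressed by formulas of the form $\alpha(f(x),d)$ with $d\in\St_M(b)$, which lie in $\defst_M(\U)$. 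The value-group part $p^\Gamma\rest_{Mb}$ is expressed by conditions $\theta(g(x),\gamma)$ with $\gamma\in\Gamma(Mb)$. Orthogonality to $\Gamma$ plus \cref{lem:unique-extension-criterion} let us approximate each such condition within $\epsilon$ by $M$-formulas. Compactness of the support (\cref{fact:support-keisler-measure}) then lets us cover $\varphi(x,b)$, and separately its complement, by finitely many sets in $\defst_M(\U)$, with a measure error below $\epsilon$. This gives $\theta_\pm\in\defst_M(\U)$ and hence stable approximation.

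The hardest part will be making this compactness step rigorous. The implication is only a type-level entailment, so we must extract finitely many formulas while controlling measure rather than mere membership. We would first try replacing $\varphi$ by $\theta_+\setminus\theta_-$ and bounding the leftover region's measure by a finite sum of value-group error terms.
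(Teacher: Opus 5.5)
Your $(1)\Rightarrow(2)$ is the paper's argument. Your $(2)\Rightarrow(3)$ is valid but organized differently. The paper never pushes $\mu$ forward. It builds strong honest definitions $\psi_0(x;d)\rightarrow\varphi(x;b)\rightarrow\psi_1(x;d)$ with $d$ in an elementary extension of the pair $(\U,M)$ (\cref{fact:strong-definitions-gamma}). It then uses finite satisfiability of $\mu$ in $M$ to see that both sides have measure $\mu(\varphi(x;b))$, and definability of $\mu$ to move $d$ back into $M$. You instead push $\mu$ along the map $g$ of \cref{lem:finite-gamma-normal-form} and regard $g_*\mu$, via stable embeddedness, as a generically stable measure over $\Gamma(M)$ in the distal induced structure. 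You then quote smoothness of generically stable measures in distal theories, a fact the paper itself uses in \cref{prop:projectivecase}. \cref{lem:unique-extension-criterion} gives $\Gamma(M)$-definable sandwiches, which pull back along $g$. This is shorter and more modular, at the cost of treating the distal theorem as a black box. You should state explicitly that $M$-invariance, definability and finite satisfiability of $g_*\mu$ become the same properties over $\Gamma(M)$ inside $\Gamma$.

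The genuine gap is in $(3)\Rightarrow(1)$, at exactly the step you flag. You assert that compactness of the support yields finitely many $\defst_M(\U)$-sets approximating $\varphi(x;b)$ up to $\epsilon$. But you give no way to turn the type-level entailment of \cref{fact:implication-from-residue-value-restrictions} into a measure estimate. The missing idea is that the covering step costs no measure at all.

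Let $K=\{p\in\operatorname{Supp}(\mu\rest_{Mb}) : \varphi(x;b)\in p\}$; it is compact. For $p\in K$ you first need \emph{some} global $M$-invariant extension, not \emph{the} extension. Every formula of $p$ has positive measure, hence does not fork over $M$, and in NIP a nonforking global extension is $M$-invariant (\cref{lem:weakly-random-invariant-extension}). Compactness applied to the entailment then gives a single formula $\vartheta_p=\psi(x)\wedge\alpha(f(x);d)\wedge\beta(g(x);e)\in p$ with $\vartheta_p\rightarrow\varphi(x;b)$. Finitely many $[\vartheta_{p_i}]$ cover $K$. For $\eta=\bigvee_{i\leq n}\vartheta_{p_i}$ one has $\mu(\varphi(x;b)\wedge\neg\eta)=0$ \emph{exactly}, by the second clause of \cref{fact:support-keisler-measure}. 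Indeed, a formula over $Mb$ of positive measure lies in some support type; that type would belong to $K$ and hence contain $\eta$.

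Error terms enter only afterwards. Orthogonality gives $\mathcal L(M)$-formulas $\rho^-\rightarrow\psi(x)\wedge\beta(g(x);e)\rightarrow\rho^+$ with $\mu(\rho^+)-\mu(\rho^-)<\epsilon/n$. Keeping the stable conjunct, $\rho^-\wedge\alpha(f(x);d)\in\defst_M(\U)$ lies inside $\vartheta_{p_i}$, and what it misses is contained in $\rho^+\setminus\rho^-$. The disjunction $\sigma$ of these formulas satisfies $\sigma\rightarrow\varphi(x;b)$ and $\mu(\varphi(x;b))-\mu(\sigma)=\mu(\eta\wedge\neg\sigma)<\epsilon$. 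Applying the same to $\neg\varphi(x;b)$ and taking complements gives the outer approximant. Your tentative plan of replacing $\varphi$ by $\theta_+\setminus\theta_-$ and summing value-group errors lacks this null-set step, and that step is what makes the compactness argument work.
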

\begin{remark} The implication $(1) \rightarrow (2)$ holds in any theory by \cref{prop:stableapproxthengenstable}. 

$(2) \rightarrow (3)$ will  hold for an arbitrary model of $\mathrm{ACVF}$. Indeed, its proof requires that the theory is $\mathrm{NIP}$ and that $\Gamma$ is a stably embedded set whose induced theory is distal. Our proof of the implication $(3)\rightarrow (1)$ only holds in $\mathrm{ACVF}$, and it does use spherically complete hypothesis on the base $M$ since it uses \cref{fact:implication-from-residue-value-restrictions}. 
\end{remark}

\subsection{Generically stable implies orthogonal to a distal stably embedded sort \texorpdfstring{$\Gamma$}{Gamma}.}
In this subsection we prove that given $\mu$ a generically stable measure in an $\mathrm{NIP}$ theory $T$ and $\Gamma$ is a stably embedded $\emptyset$-definable set, that eliminates imaginaries and whose induced structure is distal; then $\mu$ must be orthogonal to $\Gamma$. The following fact is the form of the distal strong honest-definition
theorem that we need. It follows closely, \cite[Theorem $9.21$]{simon} but we include the complete proof for sake of completeness.
\begin{lemma}
\label{fact:strong-definitions-gamma} Let $T$ be $\mathrm{NIP}$ and $M \models T$. Let $\Gamma$ be an $\emptyset$-definable set equipped with $\mathcal{L}_{\Gamma}$-structure, and assume that $\Gamma$ is stably embedded, eliminates imaginaries and its induced structure is the $\mathcal{L}_{\Gamma}$-structure which is distal.
Let $\phi(x;y)$ be an
$\mathcal L(M)$-formula in which $y$ ranges over $\Gamma^n$, and let $b\in\Gamma(\U)^n$. Then there are an elementary extension of pairs $(\U,M)\preceq(\U',M'),$ $\mathcal L(M)$-formulas $\psi_0(x;z)$ and $\psi_1(x;z)$, and a tuple
$d\in M'$ such that
$\U'\models
\forall x\,
\bigl(
\psi_0(x;d)\rightarrow
\phi(x;b)\rightarrow
\psi_1(x;d)
\bigr)$ and $\psi_0(M;d)
=\phi(M;b)=\psi_1(M;d).$
\end{lemma}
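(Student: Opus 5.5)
We reduce to the value group, apply distal strong honest definitions there, and pull the result back to $x$. First, by \cref{lem:finite-gamma-normal-form} applied to the formula $\phi(x;y)$, with $X$ the domain of $x$, we obtain an $M$-definable map $g\colon X\to S\subseteq\Gamma^r$ and an $\mathcal L_\Gamma$-formula $\theta(u;y)$ with parameters in $\Gamma(M)$ such that $\phi(x;y)\leftrightarrow\theta(g(x);y)$ for all $x\in X$ and $y\in\Gamma^n$. It therefore suffices to find $\mathcal L_\Gamma$-formulas $\chi_0(u;z),\chi_1(u;z)$ over $\Gamma(M)$ and a tuple $d$ in $\Gamma(M')$ such that
\[\chi_0(u;d)\rightarrow\theta(u;b)\rightarrow\chi_1(u;d)\quad\text{on }\Gamma(\U')^r,\]
and such that all three formulas have the same solutions on $g(M)\subseteq\Gamma(M)^r$. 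We would then set $\psi_i(x;z):=x\in X\wedge\chi_i(g(x);z)$. Here $\psi_1$ should also allow $x\notin X$, or we simply assume $\phi$ implies $x\in X$. The required implications and the equalities on $M$ follow directly from the normal form, because $g$ is $M$-definable and so $g(M)\subseteq\Gamma(M)$.

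Second, we prove the statement inside the distal structure $\Gamma$. Let $C=\Gamma(M)$, viewed as a small subset of the structure $\Gamma(\U)$, which by stable embeddedness and elimination of imaginaries is a monster model of the distal theory $T_\Gamma$. We invoke the strong honest definitions of distal theories (Chernikov--Simon; \cite[Theorem 9.21]{simon}) for the formula $\theta(u;y)$ and the parameter $b$. This yields a formula $\chi(u;z)$ and, in an elementary extension of the pair $(\Gamma(\U),C)$, a tuple $d\in C'$ with $\chi(u;d)\rightarrow\theta(u;b)$ and $\chi(C;d)=\theta(C;b)$. Applying the same theorem to $\neg\theta$ gives $\chi'(u;d')$ with $\chi'(u;d')\rightarrow\neg\theta(u;b)$ and $\chi'(C;d')=\neg\theta(C;b)$. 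We then take $\chi_0=\chi$, $\chi_1=\neg\chi'$, and concatenate $d,d'$ into a single tuple.

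The main obstacle is the choice of ambient pair. The distal theorem produces an elementary extension of the pair $(\Gamma(\U),\Gamma(M))$, whereas the statement asks for an extension $(\U,M)\preceq(\U',M')$ of the full pair, with $d\in M'$. We would handle this by working directly in a sufficiently saturated elementary extension $(\U',M')$ of $(\U,M)$. Stable embeddedness of $\Gamma$, uniformly in the pair, makes $(\Gamma(\U'),\Gamma(M'))$ a saturated extension of $(\Gamma(\U),\Gamma(M))$. The proof of \cite[Theorem 9.21]{simon} only needs saturation of the pair in order to find $d$ realizing a type over $C\cup\{b\}$ finitely satisfiable in $C$. We would rerun that argument: for each finite $C_0\subseteq C$, distality gives a uniform honest definition $\chi(u;c)$ with $c\in C$ that agrees with $\theta(u;b)$ on $C_0$. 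Compactness in the pair language, using $\aleph_1$-saturation of $(\U',M')$, then produces $d\in\Gamma(M')\subseteq M'$ that works for all of $C=\Gamma(M)$ simultaneously.
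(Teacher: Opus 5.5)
The skeleton of your argument matches the paper's: the normal form $\phi(x;y)\leftrightarrow\theta(g(x);y)$, realizing the witness by compactness in an extension of the pair $(\U,M)$, and pulling back along $g$. The gap is in the one step that carries the content, which you misattribute. Strong honest definitions (Simon, Theorem 9.21) describe the type of the \emph{external point} $b$ over the set $C$. Applied to $\theta^{\mathrm{opp}}(y;u):=\theta(u;y)$, they give a formula $\Theta(y;z)$ and $d$ from $C'$ with $\models\Theta(b;d)$ and $\Theta(y;d)\vdash\tp_{\theta^{\mathrm{opp}}}(b/C)$. This is a formula in the variable $y$ of $b$, not in $u$. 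What you claim to obtain is different: a $\chi(u;d)$ with $\chi(C;d)=\theta(C;b)$ and $\chi(u;d)\rightarrow\theta(u;b)$ on all of $\Gamma(\U')^r$.

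Honest definitions in an arbitrary NIP theory, including the uniform ones you invoke at the finite level in your last paragraph, only control $\chi(u;d)$ on $C$ (or $C'$). Without distality the global implication is false. Take $E$ an equivalence relation with infinitely many infinite classes, $b$ in a class missing $M$, and $\theta(u;b)=\neg E(u,b)$. Any $\chi(u;d)$ with $d\in M'$ and $\chi(M;d)=M$ is satisfied by some $m\in M$ lying outside the finitely many classes of its parameters. That $m$ has the same type over those parameters as $b$, so $\chi(b;d)$ holds. Since $g(X(\U'))$ is in general not contained in $\Gamma(M')^r$, you really need the global implication. Your compactness argument transfers it only if the finite-level statement already contains it, and as written it does not.

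The missing idea is the transposition used in the paper. Put $\chi_0(u;z):=\forall y\,(\Theta(y;z)\rightarrow\theta(u;y))$ and $\chi_1(u;z):=\exists y\,(\Theta(y;z)\wedge\theta(u;y))$. Since $b$ satisfies $\Theta(y;d)$, you get $\chi_0(u;d)\rightarrow\theta(u;b)\rightarrow\chi_1(u;d)$ everywhere. For $c\in C$, the formula $\Theta(y;d)$ decides $\theta(c;y)$ exactly as $b$ does, so $\chi_0(c;d)\leftrightarrow\theta(c;b)\leftrightarrow\chi_1(c;d)$. A single $\Theta$ thus yields both approximations, so the separate application to $\neg\theta$ and the concatenation of $d,d'$ are unnecessary. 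The paper uses the set $\rho(X(M))$ of canonical parameters instead of $\Gamma(M)$, which makes no difference.

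A smaller point: the type you realize in the pair is over $C\cup\{b\}$, which has size $|M|$, so $\aleph_1$-saturation does not suffice. Take $(\U',M')$ to be $|M|^+$-saturated, or simply an elementary extension of pairs realizing that type.
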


\begin{proof}
By \cref{lem:finite-gamma-normal-form} there are an
$M$-definable set
$S\subseteq\Gamma^m,$
an $M$-definable surjective map
$\rho:X \rightarrow S,$
and an $\mathcal L_\Gamma(\Gamma(M))$-formula $\chi(y;u)$, with
$u$ ranging over $S$, such that
$\U\models
\forall x\,\forall y\,
\bigl(
\phi(x;y)\leftrightarrow
\chi(y;\rho(x))
\bigr).$  For $a\in X(\U)$, the element $\rho(a)$ is the canonical
parameter of the subset
$\{c\in\Gamma(\U)^n | \ \U\models\phi(a;c)\}.$
Let
$A_\rho:=\rho(X(M))\subseteq S(\Gamma(\U)).$
Thus $A_\rho$ is the set of canonical parameters of the members of
the family defined by taking instances of $\phi$, whose parameters range on $M$.
Consider the structure
$\bigl(\Gamma(\U),A_\rho\bigr),$
namely the distal structure induced on $\Gamma(\U)$  expanded by a
predicate for the subset $A_\rho$.  The set $A_\rho$ is definable in the pair
$(\U,M)$, since
$
u\in A_\rho \leftrightarrow
\exists x\,
\bigl(
P(x)\wedge\rho(x)=u
\bigr),
$
where $P$ is the predicate naming the distinguished sub-model $M$ of $\mathbb{M}$. Consequently, the structure $(\Gamma(\U),A_{\rho})$ is interpretable in $(\U, M)$. \\
Choose a sufficiently saturated elementary extension of pairs $(\U,M)\preceq(\U',M').$ Let $A_\rho':=\rho(X(M')).$
By interpretability,
$\bigl(\Gamma(\U),A_\rho\bigr)
\preceq
\bigl(\Gamma(\U'),A_\rho'\bigr)$
is an elementary extension. We choose $(\U',M')$ sufficiently
saturated so $\bigl(\Gamma(\U'),A_\rho'\bigr)$ is also sufficiently saturated.  In particular, we can apply the argument in the proof \cite[Theorem~9.21]{simon} to the formula
$
\widetilde\phi(u;y)
:=
\chi(y;u),$
the subset $A_\rho\subseteq S(\Gamma(\U))$, and the tuple
$b\in\Gamma(\U)^n$, and we obtain an $\mathcal L_\Gamma(\Gamma(M))$-formula
$\Theta(y;z)$ and a tuple
$d\in A_\rho'\subseteq M'$
such that $\Gamma(\U')\models\Theta(b;d)$
and $\Theta(y;d)$ determines the
$\widetilde\phi^{\mathrm{opp}}$-type of $b$ over $A_\rho$. The last assertion states that for every $u\in A_\rho$ one has
$\Theta(y;d)\vdash\chi(y;u),$ if $\Gamma(\U)\models\chi(b;u),$ and 
$\Theta(y;d)\vdash\neg\chi(y;u),$ if $\Gamma(\U)\models\neg\chi(b;u).$ Given $a\in X(M)$ then $\rho(a)\in A_\rho$. In particular, for every $a\in X(M)$,
\begin{align}\label{eq:check}
\Theta(y;d)&\vdash\phi(a;y), \ \text{if} \ \U\models\phi(a;b), \ \text{and}\\
\Theta(y;d)&\vdash\neg\phi(a;y), \ \text{if} \ \U\models\neg\phi(a;b).
\end{align}
We now construct the required lower and upper approximations.\\
Let
$\psi_0(x;z)
:=\forall y\,
\bigl(
\Theta(y;z)\rightarrow\phi(x;y)
\bigr),$ and $
\psi_1(x;z)
:=
\exists y\,
\bigl(
\Theta(y;z)\wedge\phi(x;y)
\bigr).$
Since $\Theta$ is an $\mathcal L_\Gamma(\Gamma(M))$-formula and
$\phi$ is a $\mathcal{L}(M)$-formula, both $\psi_0$ and $\psi_1$ are
$\mathcal L(M)$-formulas. \\
Since the tuple $b$ satisfies $\Theta(y;d)$. Then, for every $a\in X(\U')$, if $\U' \models \psi_0(a;d) $ then $\U' \models \phi(a;b),$
because the universal implication defining $\psi_0(a;d)$ may be
evaluated at $y=b$. Similarly, if 
$\U' \models \phi(a;b)$ then $\U' \models \psi_1(a;d),$
because $y=b$ witnesses the existential quantifier defining
$\psi_1(a;d)$. Thus
$
\U'\models
\forall x\,
\bigl(
\psi_0(x;d)\rightarrow
\phi(x;b)\rightarrow
\psi_1(x;d)
\bigr),
$ which proves the first part of the statement.\\
It is only left to compare the traces on $M$. Let $a\in X(M)$. Suppose first that
$\U\models\phi(a;b).$
By  \textup{(1)}, $\Theta(y;d)\vdash\phi(a;y).$
Therefore
$\U'\models\psi_0(a;d).$
Together with the fact that 
$\mathbb{M} \models \forall x \big( \psi_0(x;d)\rightarrow\phi(x;b)\big)$, this shows that
$\psi_0(M;d)=\phi(M;b).$
Now suppose that $\U\models\neg\phi(a;b).$
By \textup{(2)}, $\Theta(y;d)\vdash\neg\phi(a;y).$
Hence there is no $y$ satisfying both $\Theta(y;d)$ and
$\phi(a;y)$, so $\U'\models\neg\psi_1(a;d).$
Since $\mathbb{M} \models \forall x \big( \phi(x;b)\rightarrow\psi_1(x;d)\big)$, we can conclude that
$\phi(M;b)=\psi_1(M;d).$ We conclude that
$\psi_0(M;d)=\phi(M;b)=\psi_1(M;d),$
which is the second part of the statement.
\end{proof}
\begin{proposition}
\label{prop:gs-implies-gamma-orthogonal} Let $T$, $M$ and $\Gamma$ be as in \cref{fact:strong-definitions-gamma}. Let $\mu$ be a global $M$-invariant Keisler measure. If $\mu$ is
generically stable over $M$, then $\mu$ is orthogonal to $\Gamma$.
\end{proposition}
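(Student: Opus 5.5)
By \cref{lem:unique-extension-criterion}, applied with $\mathbb F=\mathbb B_x(M)$, it is enough to show the following. Fix a tuple $b\in\Gamma(\U)^n$ and an $\mathcal L(M)$-formula $\phi(x;y)$ with $y$ ranging over $\Gamma^n$. Then $\phi(x;b)$ should be $\mathbb B_x(M)$-approximated with respect to $\mu$: for every $\epsilon>0$ there are $\mathcal L_x(M)$-formulas $\theta_-\rightarrow\phi(x;b)\rightarrow\theta_+$ with $\mu(\theta_+)-\mu(\theta_-)<\epsilon$. Every formula over $Mb$ has this shape up to equivalence, since we may move the $M$-parameters into $\phi$. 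So uniqueness of the extension of $\mu\rest_M$ to $Mb$ follows. Strictly, \cref{lem:unique-extension-criterion} speaks of global measures. Extensions to $Mb$ can be lifted to global ones by Keisler's extension result, and the inner/outer bound $(\star)$ applies verbatim, so this causes no problem.

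The first step applies \cref{fact:strong-definitions-gamma} to $\phi$ and $b$. This gives an elementary extension of pairs $(\U,M)\preceq(\U',M')$, $\mathcal L(M)$-formulas $\psi_0(x;z)$ and $\psi_1(x;z)$, and $d\in M'$ such that $\psi_0(x;d)\rightarrow\phi(x;b)\rightarrow\psi_1(x;d)$ holds in $\U'$ and $\psi_0(M;d)=\phi(M;b)=\psi_1(M;d)$. Consider the formula $\chi(x;b,d):=\psi_1(x;d)\wedge\neg\psi_0(x;d)$. Its trace on $M$ is empty.

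Next I want to transfer $\mu$ to $\U'$. I will regard $\U'$ as a monster model and let $\mu'$ be the unique global $M$-invariant extension of $\mu\rest_M$ that is generically stable over $M$; it exists and is unique because a generically stable measure is determined by its definition over $M$. Since $\mu'$ is finitely satisfiable in $M$ (\cref{thm:genstable}) and $\chi(M;b,d)=\emptyset$, we get $\mu'(\chi(x;b,d))=0$. Hence $\mu'(\psi_1(x;d))=\mu'(\psi_0(x;d))$, and this common value equals $\mu'(\phi(x;b))=\mu(\phi(x;b))$.

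The main obstacle is turning the parameter $d\in M'$, which lies outside $M$, into genuine $\mathcal L(M)$-formulas. Here I use definability of $\mu'$ over $M$. The map $e\mapsto\mu'(\psi_1(x;e))-\mu'(\psi_0(x;e))$ is continuous on $S_z(M)$, and it equals $0$ at $\tp(d/M)$. So there is a formula $\eta(z)\in\tp(d/M)$ on which this difference is $<\epsilon$. Moreover, since $\mu'(\phi(x;b))$ is determined by $\tp(b/M)$, I can take a formula $\eta'(z)\in\tp(d/Mb)$ witnessing the sandwich $\psi_0(x;z)\rightarrow\phi(x;b)\rightarrow\psi_1(x;z)$.

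Then I use finite satisfiability of the pair. Since $(\U,M)\preceq(\U',M')$ and $d\in M'$, the formula $\exists z\in P\,\bigl(\eta(z)\wedge\forall x\,(\psi_0(x;z)\rightarrow\phi(x;b)\rightarrow\psi_1(x;z))\bigr)$ holds in $(\U',M')$. It has parameters in $\U$, so it holds in $(\U,M)$ and yields some $m\in M$. Setting $\theta_-:=\psi_0(x;m)$ and $\theta_+:=\psi_1(x;m)$ gives $\mathcal L(M)$-formulas with $\theta_-\rightarrow\phi(x;b)\rightarrow\theta_+$. Because $\eta(m)$ holds, the definability estimate gives $\mu(\theta_+)-\mu(\theta_-)<\epsilon$, noting that $\mu'$ agrees with $\mu$ on $\mathcal L(M)$-formulas.

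I expect the delicate points to be twofold. One is justifying that $\mu$ extends to $\U'$ while staying generically stable and continuously definable over $M$. The other is that the definability estimate must be evaluated at $m\in M$ rather than at $d$; this works precisely because the open condition $\eta$ is over $M$ and is satisfied by $m$.
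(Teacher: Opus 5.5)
Your proof is correct and follows the paper's route: strong honest definitions from \cref{fact:strong-definitions-gamma}, the canonical generically stable extension of $\mu$ to $\U'$, finite satisfiability to get $\mu'\bigl(\psi_1(x;d)\wedge\neg\psi_0(x;d)\bigr)=0$, continuity of the definition scheme to obtain a formula in $\tp(d/M)$, and elementarity of the pair $(\U,M)\preceq(\U',M')$ to pull the witness into $M$. The differences are harmless: you use a single witness $m\in M$ controlled by the difference function where the paper picks separate $d_0,d_1$ each close to $r$, and your remark about $\eta'(z)$ is unnecessary because the sandwich formula itself already lies in $\tp(d/Mb)$.
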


\begin{proof}
Let $b\in\Gamma(\U)^n.$ We prove that $\mu\rest_M$ has a unique extension to $Mb$.
Fix an $\mathcal L(M)$-formula $\phi(x;y)$, where $y$ ranges over
$\Gamma^n$, and let $\epsilon>0$. By
Fact~\ref{fact:strong-definitions-gamma}, there are an elementary
extension of pairs
$
(\U,M)\preceq(\U',M'),
$
formulas $\psi_0(x;z)$ and $\psi_1(x;z)$ over $M$, and a tuple
$d\in M'$ such that
\[\U'\models
\forall x,
\bigl(
\psi_0(x;d)\rightarrow
\phi(x;b)\rightarrow
\psi_1(x;d)
\bigr),  \ \text{and}
\tag{*}\]

\[\psi_0(M;d)
= \phi(M;b)
=\psi_1(M;d).
\tag{**}\]

Since $\mu$ is generically stable over $M$, it is definable over $M$
and finitely satisfiable in $M$ (\emph{c.f} \cref{thm:genstable}).  We write $\mu$ to also indicate the canonical extension (i.e. the definable extension which is unique by \cite[Theorem~A.24]{CoGaHa}) of $\mu$ to $\mathbb{M}'$, which is still generically stable over $M$. 
We first show that the two strong definitions in \textup{(*)} have
the same $\mu$-measure as $\phi(x;b)$.  Put $r:=\mu(\phi(x;b)).$  \\

\begin{claim} \emph{$\mu(\psi_{0}(x,d))=\mu(\psi_{1}(x,d))=\mu(\phi(x,b))=r$}.\end{claim}
\begin{claimproof}
Consider $D_0(x):=\phi(x;b)\wedge\neg\psi_0(x;d).$
By \textup{(**)}, the formula $D_0(x)$ has no realization in $M$.
If
$\mu(D_0(x))>0,$
finite satisfiability of $\mu$ in $M$ would give some $a\in M^x$
satisfying $D_0(a)$, a contradiction. Hence $\mu\bigl(
\phi(x;b)\setminus\psi_0(x;d)
\bigr)
=0.$ Similarly, the formula
$D_1(x):=
\psi_1(x;d)\wedge\neg\phi(x;b)$
has no realization in $M$, again by \textup{(**)}. Therefore
$\mu\bigl(
\psi_1(x;d)\setminus\phi(x;b)
\bigr)=0.$
Combining this with \textup{(*)} we conclude that $
\mu(\psi_0(x;d))
= \mu(\phi(x;b))
= \mu(\psi_1(x;d))=r.$ \end{claimproof}

It remains to replace the parameter $d\in M'$ by parameters from
$M$. For $i=0,1$, consider the function
$F_i:S_z(M)\mapsto [0,1],$ defined as $
F_i(\tp(c/M)):=\mu(\psi_i(x;c)).$
Since $\mu$ is definable over $M$, the function $F_i$ is continuous.
By Claim~1,
$F_i(\tp(d/M))=r.$
Therefore the inverse image under $F_i$ of the interval
$(r-\epsilon/2,r+\epsilon/2)$
is an open neighborhood of $\tp(d/M)$ in $S_z(M)$. Since the Stone
topology has a basis of clopen sets, there is an
$\mathcal L(M)$-formula $\theta_i(z)\in\tp(d/M)$ such that
\[
\left|
\mu(\psi_i(x;c))-r
\right|
<
\frac{\epsilon}{2} \ \text{whenever}\ \U'\models\theta_i(c).\]

In $(\U',M')$, the tuple $d\in M'$ satisfies
$\theta_0(d)\wedge\forall x\bigl(\psi_0(x;d)\rightarrow\phi(x;b)\bigr).$
Writing $P$ for the predicate naming the distinguished submodel, this
says that
$(\U',M')\models\exists z\big(P(z)\wedge\theta_0(z)\wedge\forall x\bigl(\psi_0(x;z)\rightarrow\phi(x;b)\big).$
The parameters occurring in this formula belong to the smaller pair
$(\U,M)$. Since
$
(\U,M)\preceq(\U',M'),
$
there is $d_0\in M$ such that
$\U\models
\forall x
\bigl(
\psi_0(x;d_0)\rightarrow\phi(x;b)
\bigr)$
and, $\left|
\mu(\psi_0(x;d_0))-r
\right|
<
\frac{\epsilon}{2}.$
Applying the same argument to
$
\theta_1(z)
\wedge
\forall x
\bigl(
\phi(x;b)\rightarrow\psi_1(x;z)
\bigr),
$
we obtain $d_1\in M$ such that
$\U\models
\forall x
\bigl(
\phi(x;b)\rightarrow\psi_1(x;d_1)
\bigr)$
and
$\left|
\mu(\psi_1(x;d_1))-r
\right|
<
\frac{\epsilon}{2}.$

Consequently, 
$\U \models \forall x \big(\psi_0(x;d_0)
\rightarrow
\phi(x;b)
\rightarrow
\psi_1(x;d_1)\big).$
In particular,
$\mu(\psi_0(x;d_0))
\leq r
\leq
\mu(\psi_1(x;d_1)).$
Hence
\[
\mu(\psi_1(x;d_1))
-\mu(\psi_0(x;d_0))
=
\bigl(
\mu(\psi_1(x;d_1))-r
\bigr)
+
\bigl(
r-\mu(\psi_0(x;d_0))
\bigr)\
<
\frac{\epsilon}{2}
+
\frac{\epsilon}{2}\
=
\epsilon.
\]

Thus, for the arbitrary formula $\phi(x;b)$ and every
$\epsilon>0$, we have found $M$-definable lower and upper
approximations whose measures differ by less than $\epsilon$. By
Lemma~\ref{lem:unique-extension-criterion}, the measure
$\mu\rest_M$ has a unique extension to $Mb$. Since the tuple $b\in\Gamma(\U)^n$ was arbitrary, $\mu$ is orthogonal
to $\Gamma$.
\end{proof}

\subsection{From orthogonality to stable approximation}
\label{subsec:orthogonality-stable-approximation}

Throughout this subsection, we work with $T=\mathrm{ACVF}$. Furthermore we will assume that $M\models\ACVF$ is spherically complete,
so that
Fact~\ref{fact:implication-from-residue-value-restrictions}
applies. Let $\mu$ be a global $M$-invariant Keisler measure in a
tuple $x$ of valued-field variables.
\begin{lemma}
\label{lem:weakly-random-invariant-extension} Let $M \models \mathrm{ACVF}$ and $\mu$ be a global Kiesler $M$-invariant measure.
Let $b$ be a finite tuple from $\U$, and let
$p\in S_x(Mb)$ assume that
$\mu(\theta(x))>0$
for every $\mathcal{L}(Mb)$-formula $\theta(x)\in p$. Then $p$ does not fork over $M$.
Consequently, $p$ admits a global $M$-invariant extension.
\end{lemma}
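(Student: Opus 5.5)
The plan is to show that every formula in $p$ does not fork over $M$, using that a global $M$-invariant Keisler measure does not give positive measure to formulas that fork over $M$. Fix $\theta(x)\in p$, an $\mathcal L(Mb)$-formula with $\mu(\theta(x))>0$. First I will show that $\theta(x)$ does not divide over $M$. Suppose, toward a contradiction, that $\theta(x)=\theta(x,b)$ divides over $M$. Then there is an $M$-indiscernible sequence $(b_i)_{i<\omega}$ with $b_0=b$ such that $\{\theta(x,b_i)\}_{i<\omega}$ is $k$-inconsistent for some $k$. By $M$-invariance of $\mu$, since $b_i\equiv_M b$, we get $\mu(\theta(x,b_i))=\mu(\theta(x,b))=r>0$ for every $i$. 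The standard counting argument then gives the contradiction. Take $N>k/r$ and apply finite additivity of $\mu$ to the Boolean algebra generated by $\theta(x,b_0),\dots,\theta(x,b_{N-1})$. This gives $\sum_{i<N}\mu(\theta(x,b_i))\le (k-1)\cdot\mu\bigl(\bigvee_{i<N}\theta(x,b_i)\bigr)\le k-1$, because every point lies in fewer than $k$ of these sets. So $Nr\le k-1$, which is impossible. Hence every $\mathcal L(Mb)$-formula of positive $\mu$-measure does not divide over $M$.

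Next I pass from dividing to forking. Suppose $\theta(x)\vdash\bigvee_{j<n}\chi_j(x,c_j)$ with each $\chi_j(x,c_j)$ dividing over $M$. The parameters $c_j$ are arbitrary global parameters, and $\mu$ is global. By the previous step applied with $Mc_j$ in place of $Mb$, each $\chi_j(x,c_j)$ has $\mu$-measure $0$, since the argument only used $M$-invariance of the global measure $\mu$. Then $0<\mu(\theta)\le\sum_j\mu(\chi_j(x,c_j))=0$, a contradiction. So no formula in $p$ forks over $M$. Since the forking formulas form an ideal, this shows that $p$ does not fork over $M$. (Alternatively, in the NIP theory $\mathrm{ACVF}$ over a model, forking equals dividing, and one could cite that directly.)

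For the consequence, I will use that $\mathrm{ACVF}$ is NIP. Over a model $M$ in an NIP theory, a type that does not fork over $M$ has a global nonforking extension by extension for forking, and every global type nonforking over a model $M$ is $M$-invariant (Hrushovski–Pillay; see \cite{simon}). The obstacle I expect is mostly bookkeeping in the counting step: the Boolean algebra over the finitely many $b_i$ should be treated explicitly, by summing over atoms, each of which lies in at most $k-1$ of the sets.
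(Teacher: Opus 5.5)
Your proposal is correct and follows the same route as the paper: formulas of positive $\mu$-measure do not fork over $M$, so $p$ does not fork over $M$, and a global nonforking extension of $p$ is $M$-invariant because $\mathrm{ACVF}$ is NIP. The only difference is that you prove the first step by hand, via the $k$-inconsistency counting argument; as your argument shows, that step uses only $M$-invariance and not NIP, whereas the paper simply cites \cite[Proposition 4.3]{nipmeasures} for it.
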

\begin{proof}
Since $\mathrm{ACVF}$ is dependent and $\mu$ is $M$-invariant, the measure $\mu$
does not fork over $M$ (\emph{c.f.} \cite[Proposition 4.3]{nipmeasures}). In particular, every formula of positive
$\mu$-measure does not fork over $M$. Let $\theta(x)\in p$. By assumption,
$\mu(\theta(x))>0.$
Therefore $\theta(x)$ does not fork over $M$. As this holds for every
formula in $p$, the type $p$ does not fork over $M$. Let $q$ be a global non-forking extension of $p$. In a dependent theory, a
global type which does not fork over a model is invariant over that
model (\emph{c.f.}\cite[Proposition $2.1(ii)$]{genmeasures}). Hence $q$ is a global $M$-invariant extension of $p$.
\end{proof}

\begin{definition}
\label{def:stable-value-neighborhood}  Let $M$ be a spherically complete model of $\mathrm{ACVF}$. 
Let $q$ be a global $M$-invariant type in the valued-field variables
$x$, and let $b$ be a finite tuple from the valued-field sort.

A formula $\vartheta(x)\in q\rest_{Mb}$ is called a
\emph{stable--value group neighborhood of $q$ over $Mb$} if it can be
written as
$\vartheta(x)
=
\psi(x)
\wedge
\alpha\bigl(f(x);d\bigr)
\wedge
\beta\bigl(g(x);e\bigr),$
where:

\begin{enumerate}
\item $\psi(x)$ is an $\mathcal L(M)$-formula belonging to
$q\rest_M$;
\item $D$ is a finite product of sorts of $\St_M$, and
$f:\psi(\U)\mapsto D$
is an $M$-definable map;
\item $d\in\St_M(Mb)$
and $\alpha(u;d)$ is a formula in the multi-sorted stable structure
$\St_M$;
\item for some $r<\omega$,
$g:\psi(\U)\mapsto\Gamma^r$
is an $M$-definable map;
\item $e\in\Gamma(Mb)$
and $\beta(v;e)$ is a formula in the pure language of ordered
divisible abelian groups.
\end{enumerate}
\end{definition}
\begin{lemma}
\label{lem:existence-stable-value-neighborhood} Let $M$ be a spherically complete model of $\mathrm{ACVF}$. 
Let $q$ be a global $M$-invariant type in the valued-field variables,
let $b$ be a finite tuple from the valued-field sort, and let
$\phi(x;b)\in q.$
Then there is a stable--value group neighborhood $\vartheta_q(x)$ of $q$
over $Mb$ such that $\vartheta_q(x)\rightarrow\phi(x;b).$
\end{lemma}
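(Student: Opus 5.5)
The plan is to apply compactness to Fact~\ref{fact:implication-from-residue-value-restrictions}. That fact says
\[
q\rest_M \cup q^{st}\rest_{Mb} \cup q^{\Gamma}\rest_{Mb} \vdash q\rest_{Mb}.
\]
Since $\phi(x;b)\in q\rest_{Mb}$, compactness yields finitely many formulas whose conjunction implies $\phi(x;b)$: a formula $\psi(x)\in q\rest_M$, finitely many formulas from $q^{st}\rest_{Mb}$, and finitely many from $q^{\Gamma}\rest_{Mb}$. Each formula of $q^{st}\rest_{Mb}$ involves only finitely many coordinates of the pro-definable enumeration $\mathbf f$. Each such coordinate is an $M$-definable map into a sort of the residue field, which lies in $\St_M$. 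Hence the finitely many residue-field formulas can be combined into a single formula $\alpha(f(x);d)$. Here $f$ is an $M$-definable map into a finite product $D$ of $\St_M$-sorts, and $d$ ranges over parameters over $Mb$ in $\kres$.

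The next step is to check that $d$ can be taken in $\St_M(Mb)$ and that $\alpha$ can be taken in the $\St_M$-language. By stable embeddedness of $\kres$ (Fact~\ref{fact:acvf-stably-embedded-sorts}), the trace on $\kres$ of an $Mb$-definable set is definable with parameters in $\kres$. Elimination of imaginaries then lets us take the canonical parameter, which lies in $\kres\cap\dcl^{eq}(Mb)\subseteq \St_M(Mb)$. The residue field is among the sorts of $\St_M$ because it is $M$-definable, stable and stably embedded (Fact~\ref{OnPe}).

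The value-group part is handled the same way. Finitely many coordinates of $\mathbf g$ give an $M$-definable map $g:\psi(\U)\to\Gamma^r$. Using stable embeddedness and elimination of imaginaries for $\Gamma$ as a pure divisible ordered abelian group, the trace of the relevant $Mb$-definable set is $\beta(\Gamma^r;e)$ for some $e\in\Gamma(Mb)$ and some formula $\beta$ in the language of ordered divisible abelian groups; parameters from $\Gamma(M)$ are absorbed into $e$.

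There is a technical point about domains. To make $f$ and $g$ defined on all of $\psi(\U)$, we shrink $\psi$ by intersecting it with the domains of the finitely many coordinate maps used. These domains are $M$-definable and belong to $q\rest_M$, since $\mathbf f$ and $\mathbf g$ are defined on realizations of $q\rest_M$. Outside the domains we may extend $f$ and $g$ by a constant.

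We then set $\vartheta_q(x)=\psi(x)\wedge\alpha(f(x);d)\wedge\beta(g(x);e)$. It belongs to $q\rest_{Mb}$ because each conjunct does: for a realization $a$ of $q\rest_{Mb}$, $\mathbf f(a)$ realizes $q^{st}\rest_{Mb}$ and $\mathbf g(a)$ realizes $q^{\Gamma}\rest_{Mb}$. By the compactness choice, $\vartheta_q(x)\rightarrow\phi(x;b)$.

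The main obstacle is the bookkeeping with the pro-definable maps. The types $q^{st}\rest_{Mb}$ and $q^\Gamma\rest_{Mb}$ live in infinitely many variables, so the finite fragment picked out by compactness must be read as a formula in finitely many coordinates $f(x)$ and $g(x)$, each an $M$-definable function of $x$. The parameter conversion to $\St_M(Mb)$ and $\Gamma(Mb)$ through elimination of imaginaries needs care, but it is routine.
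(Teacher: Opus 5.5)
Your proposal is correct and follows essentially the same route as the paper. Both apply compactness to \cref{fact:implication-from-residue-value-restrictions}, absorb the domains of the finitely many coordinate maps into $\psi$, and use stable embeddedness plus elimination of imaginaries to move the parameters into $\St_M(Mb)$ and $\Gamma(Mb)$. The only difference is that you invoke these properties for $\kres$, where the paper invokes them for $\St_M$.
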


\begin{proof}
By \cref{fact:implication-from-residue-value-restrictions}, $q\rest_{M} \cup q^{st} \rest_{Mb} \cup q^{\Gamma}\rest_{Mb} \vdash q \rest_{Mb}$. 
Since $\phi(x;b)\in q\rest_{Mb}$, by compactness there are  finitely many
formulas from these three partial types already imply
$\phi(x;b)$. Let $\psi(x)$ be the conjunction of the finitely many formulas chosen
from $q\rest_M$. Only finitely many coordinates of the stable and
value-group pro-definable maps occur in the remaining formulas (\emph{c.f.} \cref{not:residue-value-prodefinable-maps}).
After including their domains in $\psi(x)$, these coordinates give
$M$-definable maps $f:\psi(\U)\mapsto D$ and $g:\psi(\U)\mapsto\Gamma^r,$ where $D$ is a finite product of sorts of $\St_M$.
Since $\St_{M}$ is stably embedded and eliminates imaginaries (\cref{fact: stablepartproperties}), the finite conjunction of
the chosen stable formulas in $q^{st}\rest_{Mb}$ may be written as
$\alpha(f(x);d)$ for some $d\in\St_M(Mb).$
Similarly, stable embeddedness and elimination of imaginaries in the value group (\emph{c.f.} \cref{fact:acvf-stably-embedded-sorts}), allow the
finite conjunction of the chosen value-group formulas to be written
as
$\beta(g(x);e)$ for some $e\in\Gamma(Mb),$
where $\beta$ is a formula in the pure ordered-group language.

Put
$\vartheta_q(x)
:=
\psi(x)
\wedge
\alpha(f(x);d)
\wedge
\beta(g(x);e).$
Each conjunct belongs to $q$, so
$\vartheta_q(x)\in q.$
By the choice of the finite subsets, one also has
$\vartheta_q(x)\rightarrow\phi(x;b).$
Thus $\vartheta_q$ is the required stable--value group neighborhood.
\end{proof}

Whenever the measure $\mu$ is orthogonal to the value group, then 
we can replace the
value-group part of such a neighborhood by formulas over $M$, while
leaving its stable part unchanged.
\begin{lemma}
\label{lem:stable-value-neighborhood-approximated}
Assume that $\mu$ is orthogonal to $\Gamma$. Following the notation introduced in \cref{def:stable-value-neighborhood}. Let
$\vartheta_{q}(x)
=
\psi_{q}(x)
\wedge
\alpha_{q}(f(x);d)
\wedge
\beta_{q}(g(x);e)$
be a stable--value group neighborhood of a global $M$-invariant type $q$. Then
$\vartheta_{q}(x)$ is $\DEFST_{M}(\mathbb{M})$-approximated with respect to $\mu$. More precisely, for every $\epsilon>0$, there are
$\vartheta^{q}_-(x),\vartheta^{q}_+(x)\in \DEFST_{M}(\mathbb{M})$ 
such that
$\vartheta^{q}_-(x)
\rightarrow
\vartheta_{q}(x)
\rightarrow
\vartheta^{q}_+(x)$
and
$\mu(\vartheta^{q}_+(x))-\mu(\vartheta^{q}_-(x))<\epsilon.$
\end{lemma}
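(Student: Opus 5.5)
The plan is to isolate the value-group conjunct and use orthogonality only on it; the stable conjunct $\alpha_q(f(x);d)$ already lies in $\DEFST_M(\U)$ and will be left untouched. Put $\gamma(x):=\psi_q(x)\wedge\beta_q(g(x);e)$. This is an $\mathcal L(Me)$-formula, since $\psi_q$ and $g$ are over $M$, $\beta_q$ is a pure ordered-group formula, and $e\in\Gamma(Mb)$ is a finite tuple from $\Gamma(\U)$.

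The first step is to show that $\gamma(x)$ is $\mathbb B_x(M)$-approximated with respect to $\mu$. By \cref{def:measure-orthogonal-gamma}, $\mu\rest_M$ has a unique extension to a Keisler measure on $\mathbb B_x(Me)$. Let $\nu_1,\nu_2$ be global Keisler measures with $\nu_i\rest_{\mathbb B_x(M)}=\mu\rest_{\mathbb B_x(M)}$. Then $\nu_1\rest_{Me}$ and $\nu_2\rest_{Me}$ are both extensions of $\mu\rest_M$ to $Me$. By uniqueness they coincide, and they also coincide with $\mu\rest_{Me}$. In particular $\nu_1(\gamma)=\nu_2(\gamma)=\mu(\gamma)$, which is condition (3) of \cref{lem:unique-extension-criterion} for $\mathbb F=\mathbb B_x(M)$. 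Hence, for the given $\epsilon>0$, that lemma yields $\mathcal L(M)$-formulas $\chi_-(x),\chi_+(x)$ with
\[\chi_-\rightarrow\gamma\rightarrow\chi_+\quad\text{and}\quad\mu(\chi_+)-\mu(\chi_-)<\epsilon.\]
This translation of ``unique extension to $Me$'' into condition (3) is the only delicate point. It works because any global measure agreeing with $\mu$ on $\mathbb B_x(M)$ restricts to an extension of $\mu\rest_M$ over $Me$.

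The second step defines the approximants:
\[\vartheta^q_-(x):=\chi_-(x)\wedge\alpha_q(f(x);d),\qquad \vartheta^q_+(x):=\chi_+(x)\wedge\psi_q(x)\wedge\alpha_q(f(x);d).\]
Each is the conjunction of an $\mathcal L(M)$-formula with the special pushforward stable formula $\psi_q(x)\wedge\alpha_q(f(x);d)$, where $d\in\St_M(Mb)$. Hence both lie in $\DEFST_M(\U)$.

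For the inclusions, $\chi_-\rightarrow\gamma$ gives $\vartheta^q_-\rightarrow\psi_q\wedge\beta_q(g(x);e)\wedge\alpha_q(f(x);d)=\vartheta_q$. Likewise $\gamma\rightarrow\chi_+$ gives $\vartheta_q\rightarrow\vartheta^q_+$.

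For the measure estimate, note that $\vartheta^q_-\rightarrow\vartheta^q_+$ (because $\chi_-\rightarrow\chi_+\wedge\psi_q$) and $\vartheta^q_+\wedge\neg\vartheta^q_-\rightarrow\chi_+\wedge\neg\chi_-$. By finite additivity and monotonicity,
\[\mu(\vartheta^q_+)-\mu(\vartheta^q_-)=\mu(\vartheta^q_+\wedge\neg\vartheta^q_-)\leq\mu(\chi_+\wedge\neg\chi_-)=\mu(\chi_+)-\mu(\chi_-)<\epsilon.\]
This completes the argument. Neither spherical completeness nor invariance of $q$ is used here; only orthogonality of $\mu$ to $\Gamma$ is needed.
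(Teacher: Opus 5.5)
Your proof is correct and follows the same route as the paper. You isolate $\psi_q(x)\wedge\beta_q(g(x);e)$, use orthogonality with \cref{lem:unique-extension-criterion} for $\mathbb F=\mathbb B_x(M)$ to get $M$-definable inner and outer approximants, and then conjoin these with the untouched stable part $\alpha_q(f(x);d)$. Two things the paper leaves implicit are handled well: your reduction of uniqueness of the extension to $Me$ to condition (3) of that lemma, and the extra conjunct $\psi_q$ in $\vartheta^q_+$, which keeps $f$ defined and makes the formula a genuine special pushforward stable formula.
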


\begin{proof}
Let
$\rho_{q}(x;e):=\psi_{q}(x)\wedge\beta_{q}(g(x);e).$
The only parameter of $\rho(x;e)$ outside $M$ is the finite tuple $e\in\Gamma(Mb).$
Since $\mu$ is orthogonal to $\Gamma$, the measure
$\mu\rest_M$ has a unique extension to $Me$. By
Lemma~\ref{lem:unique-extension-criterion} (applied to $\mathbb{F}=\mathbb{B}_{x}(M)$), for every
$\epsilon>0$ there are $\mathcal L(M)$-formulas
$\rho^{q}_-(x)$ and $\rho^{q}_+(x)$ such that
$\rho^{q}_-(x)
\rightarrow
\rho_{q}(x;e)
\rightarrow
\rho^{q}_+(x)$ and
$\mu(\rho^{q}_+(x))-\mu(\rho^{q}_-(x))<\epsilon.$
Define
$\vartheta^{q}_-(x)
:=
\rho^{q}_-(x)\wedge\alpha_{q}(f(x);d)$
and
$\vartheta^{q}_+(x)
:=
\rho^{q}_+(x)\wedge\alpha_{q}(f(x);d).$
The formula $\alpha_{q}(f(x);d)$ is stable and $d \in \St_{M}(\mathbb{M})$,
and $\rho^{q}_-$ and $\rho^{q}_+$ are formulas over $M$. Hence
$\vartheta^{q}_-(x),\vartheta^{q}_+(x)\in \DEFST_{M}(\mathbb{M}).$
The implication $\vartheta^{q}_-(x) \rightarrow \vartheta_{q}(x)\rightarrow \vartheta^{q}_+(x)$ is immediate.  Moreover,
$\vartheta^{q}_+(x)\setminus\vartheta^{q}_-(x)
\subseteq
\rho^{q}_+(x)\setminus\rho^{q}_-(x).$
Therefore:
\[
\mu(\vartheta^{q}_+(x))-\mu(\vartheta^{q}_-(x))=
\mu\big(
\vartheta^{q}_+(x)\setminus\vartheta^{q}_-(x)
\big)\leq
\mu\big(
\rho^{q}_+(x)\setminus\rho^{q}_-(x)
\big)=
\mu(\rho^{q}_+(x))-\mu(\rho^{q}_-(x))<
\epsilon.
\]
\end{proof}
Given a formula $\phi(x,b)$ and $q$ a global $M$-invariant type \cref{lem:existence-stable-value-neighborhood} gives us a stable-value group neighborhood $\vartheta_{q}(x)$ of $q$ that implies $\phi(x,b)$. \cref{lem:stable-value-neighborhood-approximated} show us how to approximate $\vartheta_{q}(x)$ by formulas in $\DEFST_{M}(\mathbb{M})$.  
We now pass from the local stable--value group neighborhoods to give a complete approximation of $\phi(x,b)$. 

\begin{proposition}
\label{prop:stable-inner-approximation} Let $M$ be a spherically complete model of $\mathrm{ACVF}$ and let  $\mu$ be a global $M$-invariant measure that is orthogonal to $\Gamma$. Let $\phi(x;b)$ be an
arbitrary formula and let $\epsilon>0$. Then there is a formula
$\sigma(x)\in\mathbb \DEFST_{M}(\U)$
such that
$\sigma(x)\rightarrow\phi(x;b)$
and $\mu(\phi(x;b))-\mu(\sigma(x))<\epsilon.$
\end{proposition}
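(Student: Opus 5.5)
The plan is a compactness argument on the support of the restriction of $\mu$ to $Mb$, covering $\phi(x;b)$ up to small measure by finitely many stable--value group neighborhoods. Let $\mu_b:=\mu\rest_{Mb}$ and let $K:=\operatorname{Supp}(\mu_b)\cap[\phi(x;b)]\subseteq S_x(Mb)$. By \cref{fact:support-keisler-measure}, $K$ is compact, and every $p\in K$ gives positive measure to all its formulas. By \cref{lem:weakly-random-invariant-extension}, each such $p$ extends to a global $M$-invariant type $q_p$ with $\phi(x;b)\in q_p$.

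For each $p\in K$, apply \cref{lem:existence-stable-value-neighborhood} to obtain a stable--value group neighborhood $\vartheta_p(x)\in q_p\rest_{Mb}=p$ with $\vartheta_p\rightarrow\phi(x;b)$. The clopen sets $[\vartheta_p]$ cover $K$, so finitely many of them, $\vartheta_{p_1},\dots,\vartheta_{p_n}$, already cover $K$. Put $\Theta:=\bigvee_i\vartheta_{p_i}$, so that $\Theta\rightarrow\phi(x;b)$.

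Next we check that $\mu(\phi(x;b)\wedge\neg\Theta)=0$. The formula $\phi\wedge\neg\Theta$ is over $Mb$, and $[\phi\wedge\neg\Theta]\cap\operatorname{Supp}(\mu_b)=\varnothing$. The second part of \cref{fact:support-keisler-measure} then gives that its measure is $0$. Hence $\mu(\Theta)=\mu(\phi(x;b))$.

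Now approximate each neighborhood. By \cref{lem:stable-value-neighborhood-approximated}, for each $i$ there is $\vartheta^{i}_-\in\DEFST_M(\U)$ with $\vartheta^i_-\rightarrow\vartheta_{p_i}$ and $\mu(\vartheta_{p_i})-\mu(\vartheta^i_-)<\epsilon/n$ (using the inner approximation and monotonicity). Let $\sigma:=\bigvee_i\vartheta^i_-$. This lies in $\DEFST_M(\U)$ since that is a Boolean algebra, and $\sigma\rightarrow\Theta\rightarrow\phi(x;b)$. Finally,
\[\mu(\Theta)-\mu(\sigma)=\mu(\Theta\wedge\neg\sigma)\le\sum_i\mu(\vartheta_{p_i}\wedge\neg\vartheta^i_-)<\epsilon,\]
which gives $\mu(\phi(x;b))-\mu(\sigma)<\epsilon$.

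The main obstacle is the choice of base for the support, which must make two things hold at once. The types in $K$ must be complete over $Mb$ so that \cref{lem:existence-stable-value-neighborhood} applies to an invariant extension containing $\phi$. At the same time, the measure-zero complement must be witnessed by a formula, so that compactness yields a finite subcover of full measure. Working over $Mb$, with $b$ a finite valued-field tuple, achieves both. A further point to check is that the neighborhood produced for $q_p$ genuinely lies in $p$; this holds because its parameters $d$ and $e$ lie in $\dcl^{eq}(Mb)$.
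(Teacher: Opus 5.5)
Your proposal is correct and follows the paper's proof essentially step for step. The shared steps are: compactness of $\operatorname{Supp}(\mu\rest_{Mb})\cap[\phi(x;b)]$; global $M$-invariant extensions via \cref{lem:weakly-random-invariant-extension}; a finite cover by stable--value group neighborhoods from \cref{lem:existence-stable-value-neighborhood}; the null-complement claim via \cref{fact:support-keisler-measure}; and inner approximations with tolerance $\epsilon/n$ from \cref{lem:stable-value-neighborhood-approximated}. Your remark that each neighborhood lies in $p$ because its parameters are in $\dcl^{eq}(Mb)$ is a detail the paper leaves implicit.
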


\begin{proof}
Let
$\mu_b:=\mu\rest_{Mb}$
and let
$
K_{\phi,b}
:=
\left\{
p\in\operatorname{Supp}(\mu_b):
\phi(x;b)\in p
\right\}.$
The support $\operatorname{Supp}(\mu_b)$ is closed in
$S_x(Mb)$, and hence $K_{\phi,b}$ is compact (\emph{c.f.} \cref{fact:support-keisler-measure}). Fix $p\in K_{\phi,b}.$
By \cref{lem:weakly-random-invariant-extension}, choose a global
$M$-invariant extension $q_p$ of $p$. Since
$\phi(x;b)\in q_p,$ by
\cref{lem:existence-stable-value-neighborhood} we obtain a stable-value neighborhood
$\vartheta_p(x)\in p$
such that
$\vartheta_p(x)\rightarrow\phi(x;b).$ The clopen sets
$\{[\vartheta_p] \ | \ p\in K_{\phi,b}\}$
cover $K_{\phi,b}$. By compactness, choose
$p_1,\ldots,p_n\in K_{\phi,b}$ such that
$K_{\phi,b}\subseteq \bigcup_{i=1}^n[\vartheta_{p_i}].$
Let $\eta(x):=\bigvee_{i=1}^n\vartheta_{p_i}(x).$
Then $\eta(x)\rightarrow\phi(x;b).$\\

\begin{claim}
\emph{$\mu(\phi(x;b)\wedge\neg\eta(x))=0.$}    
\end{claim} 
\begin{claimproof}
Suppose otherwise. Since
$\phi(x;b)\wedge\neg\eta(x)$ is a formula over $Mb$, we have
$\mu_b\bigl(\phi(x;b)\wedge\neg\eta(x)\bigr)>0.$
By Fact~\ref{fact:support-keisler-measure}, there is a type
$p\in\operatorname{Supp}(\mu_b)$
such that $\phi(x;b)\wedge\neg\eta(x)\in p.$
Since $\phi(x;b)\in p$, we have $p\in K_{\phi,b}.$
The finite cover constructed above therefore implies
$\eta(x)\in p$. On the other hand,
$\neg\eta(x)\in p$, a contradiction. This concludes the proof of the Claim.\end{claimproof}

For each $i\leq n$, apply
Lemma~\ref{lem:stable-value-neighborhood-approximated} to
$\vartheta_{p_i}(x)$ and  $\frac{\epsilon}{n}$, to obtain
$\vartheta_i^-(x),\vartheta_i^+(x) \in\DEFST_{M}(\U)$
such that
$\vartheta_i^-(x)\rightarrow\vartheta_{p_i}(x)\rightarrow\vartheta_i^+(x)$
and
$\mu(\vartheta_i^+(x))-\mu(\vartheta_i^-(x))<\frac{\epsilon}{n}.$
Put $\sigma(x):=\bigvee_{i=1}^n\vartheta_i^-(x).$
Then $\sigma\in \DEFST_{M}(\U)$ and, $\U \models \forall x \big(\sigma(x)\rightarrow \eta(x) \rightarrow \phi(x;b)\big).$ Moreover, $\eta(\U)\setminus\sigma(\U)
\subseteq
\bigcup_{i=1}^n
\big(
\vartheta_{p_i}(\U)\setminus\vartheta_i^-
(\U)\big).$ For each $i \leq n$, we have
$
\vartheta_{p_i}(\U)\setminus\vartheta_i^-
(\U)\subseteq
\vartheta_i^+(\U)\setminus\vartheta_i^-(\U),
$
and hence
$
\mu\big(
\vartheta_{p_i}(x)\setminus\vartheta_i^-(x)
\big)
<
\frac{\epsilon}{n}.
$
By the claim and finite additivity, we conclude that
\[
\mu(\phi(x;b))-\mu(\sigma(x))
=
\mu(\eta(x))-\mu(\sigma(x))=
\mu(\eta(x)\wedge \neg \sigma(x))\leq
\sum_{i=1}^n
\mu\big(
\vartheta_{p_i}(x)\setminus\vartheta_i^-(x)
\big)<
\epsilon.\]
\end{proof}

We can now obtain both an inner and an outer $\DEFST_{M}(\U)$-approximations. 

\begin{proposition}
\label{prop:orthogonal-then-stably-dominated} Assume that $M$ is a spherically complete model of $\mathrm{ACVF}$. Let $\mu$ be a global $M$-invariant Keisler measure. If $\mu$ is
orthogonal to $\Gamma$, then $\mu$ is stably approximated over $M$.
Consequently, $\mu$ is stably dominated over $M$.
\end{proposition}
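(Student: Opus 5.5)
The plan is to obtain the outer approximation from the inner one by passing to the complement, and then to invoke \cref{lem:unique-extension-criterion}. Fix an arbitrary $\mathcal L_x(\U)$-formula $\phi(x;b)$ and $\epsilon>0$.

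\emph{Inner approximation.} Apply \cref{prop:stable-inner-approximation} to $\phi(x;b)$ with $\epsilon/2$. This gives $\theta_-(x):=\sigma(x)\in\DEFST_M(\U)$ with $\theta_-(x)\rightarrow\phi(x;b)$ and $\mu(\phi(x;b))-\mu(\theta_-(x))<\epsilon/2$.

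\emph{Outer approximation.} Apply \cref{prop:stable-inner-approximation} again, now to $\neg\phi(x;b)$. This is still a formula over $Mb$, so the proposition applies verbatim, since it was proved for an arbitrary formula. We obtain $\sigma'(x)\in\DEFST_M(\U)$ with $\sigma'(x)\rightarrow\neg\phi(x;b)$ and $\mu(\neg\phi(x;b))-\mu(\sigma'(x))<\epsilon/2$. Put $\theta_+(x):=\neg\sigma'(x)$. Because $\DEFST_M(\U)$ is a Boolean algebra by \cref{def:stablebooleanalgebra}, we have $\theta_+\in\DEFST_M(\U)$. Contraposition gives $\phi(x;b)\rightarrow\theta_+(x)$. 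Finite additivity then gives
\[
\mu(\theta_+(x))-\mu(\phi(x;b))=\bigl(1-\mu(\sigma'(x))\bigr)-\bigl(1-\mu(\neg\phi(x;b))\bigr)<\epsilon/2 .
\]

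\emph{Conclusion.} Adding the two estimates gives $\mu(\theta_+)-\mu(\theta_-)<\epsilon$, with $\theta_-\rightarrow\phi(x;b)\rightarrow\theta_+$. Hence every formula is $\DEFST_M(\U)$-approximated with respect to $\mu$, which by \cref{def:stable-approximation-and-domination}(1) means that $\mu$ is stably approximated over $M$. By \cref{lem:unique-extension-criterion} applied with $\mathbb F=\DEFST_M(\U)$, each value $\mu(\phi(x;b))$ is then determined by $\mu_{\mathrm{st},M}$. Any global extension of $\mu_{\mathrm{st},M}$ therefore agrees with $\mu$ on every formula, so $\mu$ is the unique extension, i.e.\ $\mu$ is stably dominated over $M$.

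The only point needing care is the outer step. It relies on two facts: the hypotheses of \cref{prop:stable-inner-approximation} (spherical completeness, $M$-invariance, orthogonality to $\Gamma$) place no restriction on the formula, so they apply to $\neg\phi(x;b)$; and the stable Boolean algebra is closed under negation. Beyond checking these, I expect no real obstacle, since all the substantive work is already done in \cref{prop:stable-inner-approximation}.
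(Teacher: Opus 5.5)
Your proposal is correct and follows the paper's proof essentially verbatim. You obtain the inner approximation from \cref{prop:stable-inner-approximation} applied to $\phi(x;b)$ with $\epsilon/2$, and the outer approximation by applying the same proposition to $\neg\phi(x;b)$ and taking the complement in the Boolean algebra $\DEFST_M(\U)$. You then pass from stable approximation to stable domination via the implications $(1)\rightarrow(2)\rightarrow(3)$ of \cref{lem:unique-extension-criterion} with $\mathbb F=\DEFST_M(\U)$, which is exactly what the paper does.
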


\begin{proof}
Let $\phi(x;b)$ be a formula and let $\epsilon>0$. By \cref{prop:stable-inner-approximation} applied to $\phi(x;b)$ and $\epsilon/2$, we obtain
$\sigma_-(x)\in \DEFST_{M}(\U)$
such that
$\sigma_-(x)\rightarrow\phi(x;b)$
and $\mu(\phi(x;b))-\mu(\sigma_-(x))<\frac{\epsilon}{2}.$ Applying the same proposition to $\neg\phi(x;b)$ and $\epsilon/2$, we obtain
$\tau(x)\in \DEFST_{M}(\U)$ such that
$\tau(x)\rightarrow\neg\phi(x;b)$
and
$\mu(\neg\phi(x;b))-\mu(\tau(x))<\frac{\epsilon}{2}.$
Set $\sigma_+(x):=\neg\tau(x).$
Since $\DEFST_{M}(\U)$ is a Boolean algebra,
$\sigma_+(x)\in \DEFST_{M}(\U)$. 
Moreover,
$\phi(x;b)\rightarrow\sigma_+(x),$
then  $\mu(\sigma_+(x))-\mu(\phi(x;b))<\frac{\epsilon}{2}.$
Combining the previous results, we get
$\sigma_-(x)\rightarrow\phi(x;b)\rightarrow\sigma_+(x)$
and
\[
\mu(\sigma_+(x))-\mu(\sigma_-(x))
=
\bigl(
\mu(\sigma_+(x))-\mu(\phi(x;b))
\bigr) +
\bigl(
\mu(\phi(x;b))-\mu(\sigma_-(x))
\bigr)
<\epsilon.\]
Thus $\mu$ is $\DEFST_{M}(\U)$-approximated, and hence is stably
approximated over $M$.

The equivalence between stable approximation and stable domination from \cref{lem:unique-extension-criterion}, now shows
that $\mu$ is stably dominated over $M$.
\end{proof}

\begin{proof}[Proof of
Theorem~\ref{thm:main-generically-stable-measures-acvf}]
Any stably dominated measure is generically stable by \cref{prop:stableapproxthengenstable}, this is the implication $(1) \rightarrow (2)$. The implication $(2)\rightarrow (3)$, i.e from generic stability to orthogonality to the value
group is Proposition~\ref{prop:gs-implies-gamma-orthogonal}. Finally, the implication $(3)\rightarrow (1)$, this is from orthogonality to the value group to stable approximation is
\cref{prop:orthogonal-then-stably-dominated}. Lastly, stable
approximation is equivalent to stable domination by
\cref{lem:unique-extension-criterion}. 
\end{proof}

\section{Generically stable measures over \texorpdfstring{$\R_\infty$}{R} and Borel measures}
In the next chapter, we will work with generically stable measures over algebraically closed valued fields and prove that when the 
model is spherically complete and the valued group is $(\R +, \leq, 0)$, these correspond to Radon measures over the Hrushovski-Loeser Berkovich spaces. The proof will rely strongly on the relation between smooth Keisler measures over the o-minimal structure $\mathbb{R}_{\infty}$ and   Borel measures over $\R_{\infty}$. Such characterization builds on the previous identification obtained for o-minimal expansions of the real closed field (\cite[Theorem $6.3$]{genmeasures}, \cite[Proposition $5.5$]{SimonGenericallyStable}). \\
Consider any o-minimal expansion of the real numbers. By Cell Decomposition, all definable subsets of $\R^n$ are finite unions of cells $C$, each of which is homeomorphic to an open subset of $\R^k$ for $k<n$. In particular all the sets $C(\R)$ (and therefore the $\R$-points in any definable set) are Borel subsets of $\R^n$.\\
So if we have a $\sigma$-additive measure $\mu$ on the Borel subsets of $\R$, we can define a Keisler measure $\lambda$ on the boolean algebra of $\R$-definable subsets by  $\lambda(D):=\mu(D(\R))$, which by definition will be a countably additive Keisler measure over $\R$ (see \cite[Definition $6.1$]{genmeasures}). This measure $\lambda$ is a smooth Keisler measure by the following fact.
\begin{fact}\label{borelandcountablyadditivethensmooth}\cite[Theorem 6.3]{genmeasures}
Let $M$ be an o-minimal expansion of the real field. Let $V$ be a definable set in $M$, and $\lambda$ be a countably additive Keisler measure on $V$ over $\R$. Then $\lambda$ is smooth.
\end{fact}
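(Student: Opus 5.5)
The plan is to follow the proof of smoothness for $\sigma$-additive measures in o-minimal expansions of the reals: show that countable additivity forces each definable set to be determined, up to arbitrarily small error, by definable sets over $\R$. By \cref{lem:unique-extension-criterion} this gives a unique extension to every $N\supseteq M$.

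\textbf{Reduction to a criterion.} Fix an elementary extension $N\succeq M$ and a formula $\phi(x;b)$ with $b\in N$. By \cref{lem:unique-extension-criterion}, applied with $\mathbb F=\mathbb B_x(\R)$, it is enough to find, for every $\epsilon>0$, $\R$-definable sets $\theta_-\subseteq\phi(N;b)\subseteq\theta_+$ with $\lambda(\theta_+)-\lambda(\theta_-)<\epsilon$. I argue by induction on $\dim V$, using cell decomposition of $\phi(x;b)$ over $N$. Because $\R$ is Dedekind complete, every $b$ has a standard part map on bounded elements. The role of the Borel measure $\mu$ with $\lambda(D)=\mu(D(\R))$ is played by countable additivity, which produces such a $\mu$ on the algebra generated by the $\R$-points via Carathéodory.

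\textbf{Key step.} Consider a cell $C_b$ defined by boundary functions $f_b<x_n<g_b$ over a lower-dimensional cell. Take $f_\R,g_\R$ to be the $\R$-definable functions given by standard parts on a large $\R$-definable piece. This uses definability of types over $\R$, which holds because $\R$ is Dedekind complete, so every type over $\R$ is definable. The set $\phi(N;b)$ then differs from an $\R$-definable set $E$ only within the infinitesimal tubes around the graphs of $f_\R$ and $g_\R$, and around the images of lower-dimensional cells and boundaries. Each such tube is the intersection of a decreasing countable family of $\R$-definable sets $\{x: |x_n-f_\R(x')|<1/k\}$. By countable additivity, the measures of these sets converge to $\mu$ of the graph, whose $\R$-points form a lower-dimensional set $W$. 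So we can choose $k$ with $\lambda(E\pm\text{tube}_k)$ within $\epsilon$ of $\lambda(E)\pm\lambda(W)$. The mass that $\lambda$ may carry on $W$ is handled by the inductive hypothesis, applied to the restriction of $\lambda$ to $W$, together with $\phi\cap W$.

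\textbf{Main obstacle.} The hardest part is the case where boundary functions take infinite or non-standard values, or where the standard part of $f_b$ fails to be $\R$-definable on the whole base. Here the hypothesis that $V$ lies in a closed box of $\R_\infty$ is essential: it makes every element bounded, so standard parts exist in $\R_\infty$ and the point $\infty$ is treated as a genuine point of the compact space. I would handle this by compactifying $[a,\infty]$ via an $\R$-definable order-preserving homeomorphism onto a closed interval. If $\infty$ is a constant, this requires some care within the o-minimal expansion of $(\R_\infty,+,\le,0)$. On the set of standard base points where $\mathrm{st}\circ f_b$ is not definable, I would use o-minimal cell decomposition of $\mathrm{st}\circ f_b$, which is $\R$-definable by Marker–Steinhorn definability of types, to split into finitely many pieces.
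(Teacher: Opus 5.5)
Your overall strategy is sound and matches the cited proof, which the paper reproduces (with extra care at $\infty$) in the proof of \cref{theo: radon and smooth in R}(1). The strategy is: sandwich $\phi(N;b)$ between $\R$-definable sets; find an $\R$-definable $E$ and an $\R$-definable $W$ with $\dim W<\dim V$ such that $\phi(N;b)\triangle E$ lies in the infinitesimal neighbourhood of $W$; use countable additivity to make $\lambda(U_{<1/k}(W)\setminus W)$ small; and treat $\phi(N;b)\cap W$ by induction.

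The gap is that producing $E$ and $W$ is the whole content of the theorem, and your key step does not produce them correctly. Tubes around the graphs of $f_\R=\mathrm{st}\circ f_b$ (evaluated at standard points) and around its discontinuity locus need not contain $\phi(N;b)\triangle E$. For example, take positive infinitesimals $\epsilon\ll\delta$, put $c=1/2+\delta$, and consider the cell $f_b(x_1)<x_2<2$ over $(0,1)$ with $f_b(x_1)=\epsilon/(|x_1-c|+\epsilon)$. Then $f_\R\equiv 0$ is continuous. Yet $f_b(c)=1$, so $(c,1/2)\notin\phi(N;b)$, while every standard $(y,1/2)$ with $0<y<1$ lies in $\phi(N;b)$. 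Hence no $\R$-definable $E$ agrees with $\phi(N;b)$ outside the tubes $|x_2|<1/k$, $|x_2-2|<1/k$. In particular your $\theta_-=E\setminus(\text{tubes})$ is not contained in $\phi(N;b)$.

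The exceptional set must contain the standard part of the whole boundary (here the segment $\{1/2\}\times[0,1]$). You then have to prove that this set has dimension $<\dim V$, which the proposal never addresses. The cited argument avoids cell-by-cell bookkeeping altogether. Put $D=\phi(N;b)\cap V$. The sets $\mathrm{st}(D)$ and $\mathrm{st}(V\setminus D)$ are closed and $\R$-definable by Marker--Steinhorn. Let $Y$ be their intersection, $D_1=\mathrm{st}(D)\setminus Y$ and $D_0=\mathrm{st}(V\setminus D)\setminus Y$. One shows that $D_2=Y\cup(\overline{D_0}\cap\overline{D_1})$ has dimension $<\dim V$, and that $D\setminus U_{<1/k}(D_2)=D_1\setminus U_{<1/k}(D_2)$ for every $k$. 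Thus $E=D_1$ and $W=D_2$.

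Two further points. First, your ``lower-dimensional'' has to mean of dimension $<\dim V$, not less than the ambient dimension. If $V=[0,1]\times\{0\}$, then $\phi(N;b)\cap V$ can be the cell $[0,c)\times\{0\}$ with $c$ nonstandard. This cell is lower-dimensional in the plane, but its standard part has the dimension of $V$, so your induction stalls. You must first reduce, via an $\R$-definable cell decomposition of $V$ and coordinate projections, to $V$ open in $\R^{\dim V}$; countable additivity and smoothness both transfer along $\R$-definable bijections.

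Second, the Fact concerns an expansion of the real field with $V$ arbitrary. There is no box hypothesis and no $\infty$. Unbounded $V$ is handled by applying the semialgebraic homeomorphism $t\mapsto t/\sqrt{1+t^2}$ in each coordinate. Your ``main obstacle'' paragraph is really about \cref{theo: radon and smooth in R}. There, the order-preserving definable homeomorphism of $[a,\infty]$ onto a closed interval that you propose does not exist in the pure additive structure, because definable unary functions are eventually affine with rational slope. This is why the paper uses the neighbourhoods $U_{1/n}(\infty)$ and the strata $C_J$ instead.
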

Hence, so far, we have shown that any Borel measure $\mu$ over $\R$ induces a countably additive Keisler measure $\lambda$ over $\R$, and such measures are always smooth.\\
For the converse, let $\lambda$ be Keisler measure on $\mathbb B_x(\R)$. By Keisler's results we may assume that $\lambda$ is $\sigma$-additive on the $\sigma$-algebra $\sigma B_x(\R)$ generated  by $\R$-definable subsets of $\U^{|x|}$. One could try to define a Borel measure taking the $\R$-points of sets in $\sigma B_x(\R)$, but it may happen that two sets with different $\lambda$-measures have the same real points. For example, consider a Keisler probability measure $\lambda$ concentrating on the set of positive infinitesimals $0^+$. Then $1=\lambda((0,1))\neq \lambda(\bigcup_{i\in \N} (1/n,1))=0$ but the $\R$ points of both sets are equal.\\
If $D$ is bounded definable set, then it is contained in a closed box $B$. The standard map $\mathrm{st} \colon B(\mathbb{M}) \to B(\mathbb{R})$ is well defined, and so it is its restriction from $D(\mathbb{M})$ to $st(D(\mathbb{M})) \subseteq B(\mathbb{R})$. Taking coordinates, the pre-image of a closed $\R$-interval is a countable intersection of definable sets, thus the standard map is a measurable map from the $\sigma$-algebra generated by the definable subsets of $B$ to the Borel $\sigma$-algebra of $B(\mathbb{R})$. Thus, if we fix a bounded closed box $B$ we can define a measure $\mu$ on the Borel subsets of $B(\mathbb{R})$ by $\mu(X(\R)):=\lambda \big(st^{-1}(X(\R))\big)$. Note that the Borel measure $\mu$ always induces a Kiesler measure on $\mathbb{B}(\mathbb{R})$ the boolean algebra of $\mathbb{R}$-definable subsets of $B$, by taking its restriction, and we denote it to be $\mu_{*}$. If $\lambda$ is smooth, then the induced Borel measure $\mu$ agrees on the $\mathbb{R}$-definable subsets of $B(\mathbb{R})$, i.e. $\mu_{*}$ gives back $\lambda$,\footnote{In the above example the measure is \emph{not} smooth. Since $st^{-1}((0,1))$ doesn't include any infinitesimal points around zero we would have  
$\mu(0,1)^\R:=\lambda(st^{-1}((0,1)))=\lambda\left(st^{-1}\left(\bigcup_{i\in \N} \left(1/n,1\right)\right)\right)=0,$ so that if $\lambda_\mu$ is the Keisler  measure induced by $\mu$ we have $\lambda_\mu((0,1))=\mu((0,1)^\R)=0\neq 1=\lambda((0,1)).$ } as was proved in the following fact:

\begin{fact}\label{smooththenborel}\cite[Proposition 5.5]{SimonGenericallyStable}
Let $M$ be an o-minimal expansion of the real field. Let $\mu$ be a smooth measure over $M^k$ concentrating in $[0,1]^k$. Then there is $\mu_{0}$ a $\sigma$-additive $\R$-Borel measure on $[0,1]^k$ such that $\mu_0$ induces $\mu$ as a Keisler measure.
\end{fact}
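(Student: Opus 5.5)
The plan is to construct $\mu_0$ as a pushforward along the standard part map, and then use smoothness to show that this pushforward gives back $\mu$ on definable sets. Write $\widetilde\mu$ for the regular $\sigma$-additive Borel measure on $S_k(\R)$ induced by $\mu$ (Keisler's extension, or Carathéodory on the Stone space). Since $\mu$ concentrates on $[0,1]^k$, $\widetilde\mu$ is supported on the clopen set $[[0,1]^k]$. On that set the standard part map $\mathrm{st}\colon [[0,1]^k]\to[0,1]^k$ is well defined: a type in a bounded box decides all cuts of each coordinate with real numbers. It is also continuous, because the preimage of an open real box is a union of clopen sets. Define $\mu_0:=\mathrm{st}_*\widetilde\mu$. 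This is automatically a $\sigma$-additive Borel probability measure on $[0,1]^k$. What remains is the identity $\mu_0(D(\R))=\mu(D)$ for every $\R$-definable $D\subseteq[0,1]^k$.

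I would prove a stronger, relativized statement by induction on $n=\dim X$: for every closed bounded $\R$-definable set $X$ and every smooth Keisler measure $\lambda$ over $\R$ concentrating on $X$, one has $\lambda(D)=\widetilde\lambda(\mathrm{st}^{-1}(D(\R)))$ for all definable $D\subseteq X$. Here $\mathrm{st}$ maps $[X]$ into $X(\R)$ because $X$ is closed. Given $D$, let $\partial_X D$ be the frontier of $D$ relative to $X$, that is, the closure of $D$ intersected with the closure of $X\setminus D$. By o-minimality it is closed, definable, and of dimension $<n$. Consider a type $p\in[X]$ with $\mathrm{st}(p)\notin\partial_X D(\R)$. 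Then $\mathrm{st}(p)$ has a real open neighbourhood $U$ with $U\cap X\subseteq D$ or $U\cap X\subseteq X\setminus D$, and $p$ lies in $[U]$. Hence $p\in[D]$ if and only if $\mathrm{st}(p)\in D(\R)$. So $[D]$ and $\mathrm{st}^{-1}(D(\R))$ differ only inside $\mathrm{st}^{-1}(\partial_X D(\R))$. This set splits as $[\partial_X D]$ together with $N:=\bigcap_{s\in\mathbb Q^{>0}}[E_s]\setminus[\partial_X D]$, where $E_s:=\{x : \mathrm{dist}(x,\partial_X D)<s\}$.

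The key step, and the one I expect to be the main obstacle, is showing $\widetilde\lambda(N)=0$; this is where smoothness enters. Take $t$ infinitesimal in an elementary extension and consider $E_t$. Any $\R$-definable $\theta\subseteq E_t$ has all its real points at distance $0$ from the closed set $\partial_X D$, so $\theta(\R)\subseteq\partial_X D(\R)$, and by elementarity $\theta\subseteq\partial_X D$. Any $\R$-definable $\theta\supseteq E_t$ contains every $E_s$ for some real $s$ by o-minimality, since the set of real $s$ with $E_s\subseteq\theta$ is definable and contains arbitrarily small elements. Smoothness, via \cref{lem:unique-extension-criterion} applied to the extension, forces the inner and outer measures of $E_t$ to agree. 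This gives $\inf_s\lambda(E_s)=\lambda(\partial_X D)$, and by $\sigma$-additivity $\widetilde\lambda(N)=0$. The delicate point is making the outer approximation argument rigorous; I would first try the o-minimal monotonicity in $s$ described above.

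Finally, the types in $[\partial_X D]$ are handled by induction. If $\lambda(\partial_X D)>0$, the localization of $\lambda$ to $\partial_X D$ (renormalized) is again smooth and concentrates on the closed set $\partial_X D$, which has dimension $<n$. The induction hypothesis applied to $D\cap\partial_X D$ then shows that $[D]\cap[\partial_X D]$ and $\mathrm{st}^{-1}(D(\R))\cap[\partial_X D]$ have the same $\widetilde\lambda$-measure. The base case $\dim X=0$ is trivial, since $X$ is then finite and all its types are realized. Combining the three pieces gives $\lambda(D)=\widetilde\lambda(\mathrm{st}^{-1}(D(\R)))$. Applying this with $X=[0,1]^k$ and $\lambda=\mu$ shows that $\mu_0$ induces $\mu$.
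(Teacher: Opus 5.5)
Your proof is correct, and its core is the argument the paper takes from Simon and adapts in \cref{theo: radon and smooth in R}.(2). You push $\widetilde\mu$ forward along $\mathrm{st}$, then show the discrepancy is null because a distance function to a closed set cannot put mass on the unrealized type $0^+$.

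The differences are in how the argument is organized. The paper first treats closed $X$ in every dimension at once. There $[X]\subseteq\mathrm{st}^{-1}(X(\R))$ and the difference is exactly $d_X^{-1}(0^+)$. It then reaches arbitrary $X$ by induction on dimension via $\mu(X)=\mu(\overline X)-\mu(\overline X\setminus X)$, so it needs neither a relativized statement nor localized measures.

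You instead handle an arbitrary $D$ directly by cutting along the relative frontier $\partial_X D$. This forces you to induct on the dimension of the closed set carrying the measure, and to use that localizations of smooth measures are smooth. That fact is true and easy: mix two extensions of the localization with a fixed extension of the complementary localization and apply smoothness of $\lambda$. You should state it explicitly, though.

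For the null set, the paper uses that the pushforward $(d_X)_*\mu$ is smooth together with \cref{fact:smoothmeasurezero}, which itself rests on smoothness of localizations at a type. You instead compute the inner measure $\lambda(\partial_X D)$ and the outer measure $\inf_s\lambda(E_s)$ of $E_t$. This proves the needed instance directly from \cref{lem:unique-extension-criterion}, so your route is more self-contained at that step, while the paper's is leaner in the induction.

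Your outer-approximation step is fine, though the phrase ``contains arbitrarily small elements'' is imprecise. The clean justification is that $\exists s>0\,\forall x\,(\mathrm{dist}(x,\partial_X D)<s\rightarrow\theta(x))$ holds in the monster with witness $t$, and hence holds in $\R$ by elementarity.
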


We will need to obtain analogues of  \cref{smooththenborel} and \cref{borelandcountablyadditivethensmooth} for the structure $(\mathbb{R}_{\infty}, +, <)$: in order to have the valuation $\nu$ be defined everywhere, we need to work with $\R_\infty$, and in a valued field the structure on $\Gamma$ is only that of an additive group.\\
We will always assume that we are working in what in \cite{tamegeo} are defined as \emph{bounded subsets of $\Ri$}; namely, definable sets contained inside ``boxes'', namely, subsets of $(\R_\infty)^k$ of the form $\prod_{i=1}^k I_i$ where $I_i$ is a closed interval of $\R_\infty$, so that $I_{i}=[a_i,b_i]$ where $a_i, b_i \in \mathbb{R} \cup \{\infty\}$. 
We will use Keisler's approach and work with points in a saturated extension $\U$ of $(\Ri, +, \leq, 0)$. Notice that if $B$ is a box then the standard part map is a well defined map from $B(\M)$ to $B(\R)$, where we naturally expand the usual standard map $\mathrm{st}(a)=\infty$ whenever $a>n$ for all $n\in \mathbb N$.\\ 
To achieve a generalization of \cref{smooththenborel} we will need to define a family of neighborhoods that makes sense in the context of adding $\infty$ to the structure and which are definable using only the additive structure we assume.  We define the local neighborhoods as follows. Let $n\in \N$ and let $a \in \Gamma(\U)$ be such that $a \neq \infty$: we will define its closed neighborhood $U_{1/n}(a):=\{x\mid a-1/n\leq x\leq a+1/n\}$ and its open neighborhood $U_{<1/n}(a):=\{x\mid a-1/n< x< a+1/n\}$. While, for the point $\infty$ we take  $U_{1/n}(\infty):=\{x\mid x\geq n\}$, $U_{<1/n}(\infty):=\{x\mid x> n\}$.\\
We extend the definition by cartesian powers to tuples in $\Gamma_{\infty}(\U)^k$, and for a definable $X$ we define the $1/n$-neighborhood of $X$ to be the definable set
\[U_{<1/n}(X):=\{x\mid \exists y\in X\ x\in U_{<1/n}(y)\}, \ \text{and similarly for $U_{1/n}$}.\]
Given $F$ a closed $\R_{\infty}$-definable subset of $\R_{\infty}^{r}$ then $U_{<1/n}(F)$ is an open $\R_{\infty}$-definable neighbourhood of $F$, and the sequence $U_{< 1/n}(\mathbb{R}_{\infty})$ is decreasing.\\
Moreover, $F(\mathbb{R}_{\infty})=\bigcap_{n \geq 1} U_{<1/n}(F(\mathbb{R}_{\infty}))$.

\begin{theorem}\label{theo: radon and smooth in R}
Let $M$ be an o-minimal expansion of $(\R_\infty, +, \leq,0)$. 
\begin{enumerate}
\item If $V$ is an $\R_{\infty}$-definable set contained in a (closed) box, and $\mu$ a countably additive Keisler measure on $V$ over $\R_{\infty}$, then $\mu$ is smooth. In particular, the Keisler measure on the $\R$-definable subsets of a box $B$ induced by a Borel measure on $B$ is smooth.
\item Conversely, if $\mu$ be a smooth measure over $(\R_\infty)^k$ concentrating in a (closed) box $B$, there is a $\sigma$-additive $\R_{\infty}$-Borel measure on $B(\R_\infty)$ such that $\mu_0$ induces $\mu$ as a Keisler measure.
\end{enumerate}
\end{theorem}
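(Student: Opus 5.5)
Both parts will be reduced to the real-field results (\cref{borelandcountablyadditivethensmooth} and \cref{smooththenborel}) by a change of coordinates, and I will redo those arguments directly wherever the reduction does not go through. The natural tool is a homeomorphism $h\colon[a,b]\to[0,1]$ on each coordinate of the box, sending $\infty$ to $1$ when $b=\infty$, for instance $t\mapsto 1-e^{-t}$ after a translation. Such a map is not definable in $(\R_\infty,+,\leq,0)$. So I will not transport definable sets; I will only use it to see that $B(\R_\infty)$ is a compact metrizable space. I will also use that the neighbourhoods $U_{1/n}$ and $U_{<1/n}$ form a definable neighbourhood basis which is compatible with the standard part map $\mathrm{st}\colon B(\U)\to B(\R_\infty)$.

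\emph{Part (1).} I will follow the proof of \cite[Theorem 6.3]{genmeasures}. Let $\mu$ be countably additive on $V$ and let $\phi(x;c)$ be a formula with parameters $c$ from an elementary extension. By \cref{lem:unique-extension-criterion} applied with $\mathbb F=\mathbb B_x(\R_\infty)$, it suffices to find $\R_\infty$-definable inner and outer approximations whose measures differ by less than $\epsilon$. Using o-minimal cell decomposition, I decompose $\phi(x;c)$ into cells. The standard part of each cell, $\mathrm{st}(\phi(\U;c))$, is a closed set which, by o-minimality and the definability of types over $\R_\infty$ (Marker--Steinhorn), is $\R_\infty$-definable. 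Call it $F$; the set $\mathrm{st}(\neg\phi)$ is handled in the same way. The sets $\phi$ and $\neg\phi$ then differ only inside $st^{-1}$ of a lower-dimensional boundary set $E$. I then approximate $\phi$ from inside by $F\setminus U_{1/n}(E)$ and from outside by $U_{<1/n}(F)$. The gap between the two is contained in $U_{<1/n}(E)$ together with the parts of $U_{<1/n}(F)\setminus F$. The real points of these sets decrease to $E(\R_\infty)$ or to the empty set, and countable additivity ($\mu(D)=\mu(D(\R_\infty))$ as a Borel measure) makes their measure tend to $\mu(E)$. The fact that $\mu(E)$ itself is accounted for needs the same argument applied to $E$. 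I therefore organize the proof as an induction on $\dim$ of the boundary, as in the real-field case. The "in particular" clause follows because a Borel measure induces a countably additive Keisler measure: definable sets have Borel real points, since cells are Borel.

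\emph{Part (2).} I will define $\mu_0(X):=\tilde\mu(st^{-1}(X))$ for Borel $X\subseteq B(\R_\infty)$, where $\tilde\mu$ is Keisler's $\sigma$-additive extension. This is well defined because the preimages of closed boxes, including those of the form $[n,\infty]$, are countable intersections of definable sets. It is then a $\sigma$-additive Borel measure. The key claim is $\mu_0(D(\R_\infty))=\mu(D)$ for every $\R_\infty$-definable $D\subseteq B$, and this is the main obstacle. By finite additivity and cell decomposition, it suffices to show that $\mu(D\,\triangle\, st^{-1}(D(\R_\infty)))=0$. The difference is contained in $st^{-1}(\partial D)\cup$ (points infinitesimally close to but outside $D$), which is contained in $st^{-1}(\mathrm{cl}\,D\setminus\mathrm{int}\,D)$ up to the "$0^+$"-type sets. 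As in \cite[Proposition 5.5]{SimonGenericallyStable}, I would argue as follows. Each point of $\tilde\mu$-mass in such a set lies in a type over $\R_\infty$ that is not realized: it is infinitesimally close to a standard point, or infinitely large, but not equal to it. By \cref{fact:smoothmeasurezero}, every such type has $\tilde\lambda$-measure $0$. This alone does not handle uncountably many types. So I will instead use the approximation form of smoothness, again via \cref{lem:unique-extension-criterion}. Take a saturated extension, pick an infinitesimal $\delta>0$ and an infinite $N$, and consider the formula $U_{<\delta}(\partial D)$ with the $\infty$-neighbourhood replaced by $x>N$. Smoothness gives $\R_\infty$-definable outer approximations $\theta_+$ of nearly the same measure. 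Since $\theta_+$ contains an infinitesimal neighbourhood of $\partial D$, its real points contain a standard open neighbourhood of $\partial D$. Letting $\delta\to0$ by monotonicity then shows $\mu(D\triangle st^{-1}(D(\R_\infty)))\le\mu(\partial D)$. Finally $\partial D$ has lower dimension, and the same estimate applied inductively, together with the corresponding approximation from inside, yields equality.
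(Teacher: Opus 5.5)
Your Part (1) is the same argument as the paper's, namely the proof of \cite[Theorem 6.3]{genmeasures}. Off a closed lower-dimensional set $E$, the formula is sandwiched between $\R_\infty$-definable sets whose difference has measure tending to $0$ by countable additivity, and the part inside $E$ is handled by induction on dimension. That part is fine.

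The gap is in Part (2), at exactly the step you flag as the main obstacle. For a closed $\R_\infty$-definable $F$ (here $F=\partial D$) one must show that the infinitesimal shell $\mathrm{st}^{-1}(F)\setminus F$ is $\tilde\mu$-null, equivalently that $\mu(U_{<\delta}(F)\setminus F)=0$. Your outer approximation only gives $\tilde\mu(\mathrm{st}^{-1}(F))\le\mu(\theta_+)\approx\mu(U_{<\delta}(F))$. The step \emph{letting $\delta\to 0$ by monotonicity} is not a limit process. The global extension of a smooth measure is $\R_\infty$-invariant, so $\mu(U_{<\delta}(F))$ depends only on $\tp(\delta,N/\R_\infty)$. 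Identifying that number with $\mu(F)$ is the whole content of the claim.

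What you actually use cannot yield this step. Take the non-smooth measure of the paper's footnote: a global $\R_\infty$-invariant type extending $0^+$ and lying below every positive element of $\U$. For it, $\theta_+=(-1,1)$ is an outer approximation of $U_{<\delta}(\{0\})$ with exactly the same measure $1$. Yet $\tilde\mu(\mathrm{st}^{-1}(\{0\}))=1>0=\mu(\{0\})$.

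Moreover, even if you had it, a bound $\le\mu(\partial D)$ is not $0$. You would still have to split off $\mathrm{st}^{-1}(\partial D)\setminus\partial D$ and treat the part lying on $\partial D$ by induction. The paper does this more cleanly by first reducing to closed $X$, using $\dim(\overline X\setminus X)<\dim X$.

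The missing idea is how to make \cref{fact:smoothmeasurezero} handle the uncountably many types in the shell. You push forward along a definable map that collapses the whole shell into a single non-realized $1$-type.
\begin{itemize}
\item The paper partitions the box into strata $C_J$ according to which coordinates equal $\infty$.
\item On $C_J$, the sup-distance $d_{X_J}$ to $X_J=X\cap C_J$ is definable in $(+,\leq)$. It sends $(\mathrm{st}^{-1}(X_J)\cap C_J)\setminus X_J$ into $0^+(\U)$.
\item Points with a coordinate that is infinite but $\neq\infty$ are sent by a coordinate projection into $\infty^-(\U)$.
\item Pushforwards of smooth measures are smooth, so these sets are null.
\end{itemize}

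If you prefer to stay in your approximation framework, use \emph{inner} approximations instead. Any $\R_\infty$-definable subset of $U_{<\delta}(F)\setminus F$ (with $x>N$ around $\infty$) has no standard points. Indeed, a standard point within $\delta$ of some $y\in F(\U)$ equals $\mathrm{st}(y)$, which lies in $F$ because $F$ is closed. Hence such a subset is empty, and smoothness together with \cref{lem:unique-extension-criterion} gives $\mu(U_{<\delta}(F)\setminus F)=\mu^{\mathbb B(\R_\infty)}_{\mathrm{in}}(U_{<\delta}(F)\setminus F)=0$. Smooth measures are definable. Applying this to the sequence $\tp(1/n,n)$, which accumulates at such parameters $(\delta,N)$, and combining with $\sigma$-additivity, you get $\tilde\mu(\mathrm{st}^{-1}(F)\setminus F)=\lim_n\mu(U_{<1/n}(F)\setminus F)=0$.
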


\begin{proof}
    The proofs are inspired and quite close to the original ones in \cite[Theorem $6.3$]{genmeasures} and \cite[Proposition 5.5]{SimonGenericallyStable}, so we will sketch the common parts and concentrate on the things we needed to add. As usual, given a subset $X$ in $\mathbb{R}^{r}_{\infty}$ we write $\overline{X}$, $X^{\mathrm{o}}$ to denote their topological closure and its open interior. \\
    
We first prove $(1)$. Let $\mu$ be a countably additive Keisler measure over $\R_{\infty}$-definable sets concentrating on an $\R_{\infty}$-definable (closed) box $B$. 
 Let $\mu'$ be any extension of $\mu$ to a global Keisler measure, then for any $\mathbb{R}_{\infty}$-definable set $X$ we have $\mu'(X)=\mu(X)$. We need to prove that for any $\M$-definable subset $D$, $\mu'(D)$ is uniquely determined by $\mu$. Let $D$ be any $\M$-definable subset inside $B$. The proof will be an induction on the o-minimal dimension of $V$. The standard map is well defined $\mathrm{st}: B(\U) \to \B(\R_{\infty})$, hence we can consider $\mathrm{st}(D(\U)) \subseteq B(\R_{\infty})$. Then $st(D(\M))$ is a subset of $B(\R_{\infty})$ which, as explained in \cite[Theorem $6.3$]{genmeasures} (see the paragraph above the first claim) is $\R$-definable by definability of types in $\mathbb{R}$ (which was proved in \cite{marker-steinhorn}). In particular, $st(D(\M))$ and $st(D^c(\M))$, and their intersection $st(D(\U)) \cap st(D^{c}(\U))$ are $\R$-definable closed subsets of $B$. As in the second part of the paragraph before the first claim in \cite[Theorem $6.3$]{genmeasures}, if we let $Y, D_1$ and $D_0$ be the $\R$-definable sets such that 
$Y(\mathbb{R}_{\infty})=st(D(\U)) \cap st(D^{c}(\U)),$ $D_{1}(\mathbb{R}_{\infty})=st(D(\U)) \setminus Y$, and $D_{0}(\mathbb{R}_{\infty})=st(D^{c}(\U)) \setminus Y$, then $V$ is the disjoint union of $D_{0}$, $D_{1}$ and $Y$. \\ 

If $st(a) \in \overline{st(D(\U))}\setminus (st(D(\U)))^\circ$ then $U_{1/n}(a)$ intersects $D^c$ so in particular $a\in st(D^c)$ and both  $D_1(\R_{\infty}):=st(D(\U))\setminus st(D^c(\U))$ and $D_0(\R_{\infty}):=st(D^c(\U))\setminus st(D(\U))$ are open. 

The proof of Claim 1 in \cite[Theorem $6.3$]{genmeasures} holds and so that the set  $D_{2}:=Y \cup \left(\overline{{D_0}}\cap \overline{{D_1}}\right)$ has dimension less than $n$ because neither $Y(\Ri)$ nor $\overline{{D_0}(\Ri)}\cap \overline{{D_1}(\Ri)}$ can contain 
an $\R$-definable open set. So by the induction hypothesis $\mu'$ is determined by $\mu$ on definable subsets of $D_2(\U)$.
The proof of Claim 3 in \cite[Theorem $6.3$]{genmeasures} also goes through, and proves that  for each $n \in \mathbb{N}$ we have $D(\U)\setminus U_{<1/n}(D_2(\U))=D_1(\U)\setminus U_{<1/n}(D_2(\U))$. Both $D_{1}(\U)$ and $U_{< 1/n}(D_{2}(\U))$ are $\mathbb{R}_{\infty}$-definable so that (identifying as Keisler does a measure on a definable set with the points satisfying said set in a saturated model to achieve a $\sigma$-additive extension) $\mu'(D_{1}(\U)\setminus  U_{<1/n}(D_2(\U))=\mu(D_{1}(\M)\setminus D_{2}(\U))$.  \\
Now, $D_2(\R_\infty)$ is closed, $\R$-definable and $D_2(\R_\infty)=\bigcap_{n\in \N} U_{1/n}(D_2)(\R_\infty)$. We assumed that $\mu$ was countably additive over $\R_{\infty}$ and these are all $\R_{\infty}$-definable sets so  $\mathrm{lim}_{n \rightarrow \infty} \mu(U_{1/n}(D_2)\setminus D_2)=0$. For any $n \in \mathbb{N}$
\begin{align*}
0 &\leq \mu'\Bigl(
D(\U)\cap
\bigl(U_{<1/n}(D_{2}(\U))\setminus D_2(\U)\bigr)
\Bigr)\\
&\leq \mu'\bigl(U_{1/n}(D_{2}(\U))\setminus D_2(\U)\bigr)=\mu\bigl(U_{<1/n}(D_{2}(\M))\setminus D_2(\M)
\bigr),
\end{align*}
and $\mathrm{lim}_{n \rightarrow \infty} \mu'\Bigl(
D(\U)\cap
\bigl(U_{<1/n}(D_{2}(\U))\setminus D_2(\U)\bigr)
\Bigr)=0$.\\
On the other hand, for every $n \in \mathbb{N}$, note that we have \[D(\U)\setminus D_{2}(\U)=\big( D(\U) \setminus U_{<1/n}(D_{2}(\U)) \sqcup \big( D(\U) \cap (U_{<1/n}(D_{2}(\U)) \setminus D_{2}(\U)\big).\] Consequently, we get $\mu'\big( D(\U)\setminus D_{2}(\U)\big)= \mathrm{lim}_{n \rightarrow \infty} \mu'\big( D(\U) \setminus U_{<1/n}(D_{2}(\U))\big)$.\\
Since $\mu'\big( D(\U) \setminus U_{<1/n}(D_{2}(\U))\big)=\mu\big( D_{1}(\U) \setminus U_{1<n}(D_{2}(\U))\big)$, so by the countably additive over $\Ri$ hypothesis, 
$\mu'(D(\U) \setminus D_{2}(\U))=\mathrm{lim}_{n \rightarrow \infty} \mu\big(D_{1}(\U)\setminus U_{<1/n}(D_{2}(\U))\big).$
As $D_1\setminus U_{<1/n}(D_2)$ is an increasing sequence of $\R_{\infty}$-definable sets converging to $D_1\setminus D_2$. 
Combining this with the fact that $\mu'\bigl(D(\U)\cap D_2(\U)\bigr)$ was uniquely determined by induction hypothesis, we get that $\mu'(D(\U))$ is uniquely determined by $\mu$, as required.\\

For $(2)$, assume now that $\mu$ is a smooth measure over $(\R_\infty)^k$ concentrating in a box $B$. As in \cite[Proposition $5.5$]{SimonGenericallyStable}, the standard part map is a Borel map from the $\sigma$-algebra generated by the definable subsets in $B(\M)$ to the Borel $\sigma$-algebra of $B(\R_{\infty})$. We take $\mu_0$ to be the pushforward of $\mu$, then $\mu_{0}$ is a $\sigma$-additive measure on $B(\R_{\infty})$. We need to show that $\mu_0(X)=\mu(X)$ for any $\R_{\infty}$-definable set, so that the Keisler measure induced by the Borel measure $\mu_0$ is indeed $\mu$.
Since $\overline X\setminus X$ always has smaller dimension than $X$, it is enough to prove the result for closed $\R_{\infty}$-definable sets $X$. Indeed, this allow us to obtain the result (as in  \cite[Proposition $5.5$]{SimonGenericallyStable}), by induction on the o-minimal dimension of $X$.\\

So let $X$ be a $\R_{\infty}$-definable closed set. By definition of $\mu_{0}$ we have $\mu_0(X(\R_{\infty}))=\mu(st^{-1}(X(\R_{\infty}))$ so we need to show that $st^{-1}(X(\M))\setminus X(\M)$ has $\mu$-measure zero. In \cite[Proposition $5.5$]{SimonGenericallyStable} this was done by taking the distance from $d_X(a)$ from any point $a$ to $X$. If $st^{-1}(X(\M))\setminus X(\M)$ then 
$d_X\left(st^{-1}\left(X\left(\M\right)\right)\setminus X\left(\M\right)\right)$ is contained in the type of $0+$ over $\R$, so $st^{-1}(X(\M))\setminus X(\M)$ must have $\mu$-measure zero since the pushforward $\mu_{d_X}$ of $\mu$ to $\R$ given by the definable function $d_X$ is also smooth so $\mu_{d_X}(0^+)$ must be zero. 

Our strategy is to repeat this proof taking into account $\infty$, and covering $st^{-1}\left(X\left(\M\right)\right)\setminus X\left(\M\right)$ with sets which are forced to have measure zero because they are pushforwards of unrealized types in $\mathbb{R}_{\infty}$. 

\begin{claim}
Let $\pi$ be any coordinate projection. Then $\mu\left(\pi^{-1}(st^{-1}(\infty))\setminus \pi^{-1}(\infty)\right)=0.$   
\end{claim}

\begin{claimproof}
If $a\in \pi^{-1}(st^{-1}(\infty))\setminus \pi^{-1}(\infty)$  then the type of $\pi(a)$ over $\Ri$ is the unrealized type 
$\infty^-$ given by $\{x>n\mid n\in \N\}\cup \{x< \infty\}$. Since the pushforward $\mu_{\pi}$ of $\mu$ under $\pi$ must be smooth,  $\mu_{\pi}(\infty^-)=0$ then $\mu(\pi^{-1}(\infty^-))=\mu\left(\pi^{-1}(st^{-1}(\infty))\setminus \pi^{-1}(\infty)\right)=0.$\end{claimproof}\\

Let $I_\infty$ be the set of indices $i$ for which the corresponding
interval defining $B$ has upper endpoint $\infty$. We write $\pi_{i}$ to denote the projection map to the $i$-th coordinate. For each
$i\in I_\infty$, let $E_{i}=\{ a \in st^{-1}(X)\setminus X \ | \ st(\pi_{i}(a))=\infty \wedge \pi_{i}(a) \neq \infty\}$. By the Claim, $E_{i}$ has $\mu$-measure zero for each $i \in I_{\infty}$. Consequently, $E=\bigcup_{i \in I_{\infty}}E_{i}$ has also $\mu$-measure zero. For each non-empty $J \subseteq I_{\infty}$ we define 
\[C_{J}=\{ x \in B\ | \ \pi_{j}(x)=\infty \ \text{for all $j \in J$ and } \ \pi_{j}(x)< \infty \text{ \ for all $j \in I_{\infty}\setminus J$}\}.\] 
Note that the $(C_{J}(\mathbb{R}_{\infty}) \ | \ J \subseteq I_{\infty})$ form a partition of $B(\mathbb{R}_{\infty})$. Let $X_{J}= C_{J} \cap X$.  Note that the set \(X_J\) is relatively closed within \(C_J\).  Furthermore, if $a \in st^{-1}(X)\setminus X$ then $a\not\in E$ implies that $st(\pi_i(a))=\infty$ only if $\pi_i(a)=\infty$, so that
\[st^{-1}(X) \setminus X \subseteq E \cup \bigcup_{J \subseteq I_{\infty}} \left(st^{-1}\left(X_{J}\right) \cap C_{J}\right)\setminus X_J.\]
Since $E$ has measure $0$, the following claim is enough to show that $st^{-1}(X) \setminus X$ has $\mu$-measure zero whenever $X$ is closed.

\begin{claim}
    $\mu(\big(st^{-1}(X_{J}) \cap C_{J}\big) \setminus X_{J})$ has $\mu$-measure zero.
\end{claim}

\begin{claimproof}
$X_{J}$ is relatively closed within $C_{J}$, and our argument is quite similar argument as in \cite[Proposition $5.5$]{SimonGenericallyStable}. We define, for $X_{J} \neq \emptyset$, the definable function  $d_{X_{J}}$  in the ordered additive structure with domain $C_{J}$ by \[d_{X_{J}}(z):= \mathrm{\inf}_{y \in X_{J}}\left\{ \max \left\{|\pi_{i}(z)-\pi_{i}(y)| \ | \  i \notin J \right\}\right\}= \mathrm{min}_{y \in X_{J}}\left\{ \max \left\{|\pi_{i}(z)-\pi_{i}(y)| \ | \  i \notin J \right\}\right\}.\] Since the infinite coordinates are fixed, note that if
$a\in\bigl(\operatorname{st}^{-1}(X_J)\cap C_J\bigr)
\setminus X_J$,
then $d_{X_J}(a)$ realizes the type $0^+$. Indeed, let $b=st(a)$ since $b \in X_{J}$ we have that $d_{X_{J}}(a) \leq \mathrm{max}_{i \notin J} |a_{i}-b_{i}|$. Every coordinate on the right-hand side of this equation has finite standard part, thus such minimum is an infinitesimal. Moreover, since $a \notin X_{J}$ and $X_{J}$ is relatively closed within $C_{J}$, then $d_{X_{J}}(a)>0$.\\ 
As before, the pushforward $d_{X_{J}}(\mu)$ is a smooth measure so by Fact \ref{fact:smoothmeasurezero} the realizations type $\{0^{+}\}$ must have $d_{X_{J}}(\mu)$-measure zero (\emph{c.f.} \cref{fact:smoothmeasurezero}). Consequently, \[\mu\bigl(
  (\operatorname{st}^{-1}(X_J)\cap C_J)\setminus X_J
\bigr)\leq \mu\bigl(d_{X_J}^{-1}(0^+(\U))\bigr)=0.\]
 \end{claimproof}

 This concludes the proof for the closed sets. The rest of the argument for arbitrary definable sets follows by induction on the o-minimal dimension of definable sets as in the last paragraph of \cite[Proposition $5.5$]{SimonGenericallyStable}. 
\end{proof}
\begin{remark}\label{rmk:uniquenessRadon} The Borel measure obtained in \cref{theo: radon and smooth in R}.(2) is unique. Indeed, note that the $\sigma$-algebra generated by the $\mathbb{R}_{\infty}$-definable subsets of $B(\mathbb{R}_{\infty})$ coincides with the $\sigma$-algebra of Borel sets of the topology on $B(\mathbb{R}_{\infty})$ inherited from $\mathbb{R}_{\infty}^{r}$. We can show this by double inclusion, for the left to right direction note that this follows by cell decomposition. For the converse, let  $\mathcal{P}$ be the family of sets of the form:
\[ P_{\bar{q}}\colon=\{ x \in B(\mathbb{R}_{\infty}) \ | \ x_{i} \leq q_{i} \ \text{for  $i \leq k$}\} \qquad \bar{q} \in (\mathbb{Q}\cup \{ \infty\})^{k},\]
where the condition $x_{i} \leq \infty$ imposes no restriction. These sets are definable using parameters in $\mathbb{R}_{\infty}$ and the order, and are closed under finite intersection. Furthermore, they generate the $\sigma$-algebra of Borel subsets of $B(\mathbb{R}_{\infty})$. For the converse, the rational cuts generate the topology on each coordinate, including neighborhoods of infinity as $(q,\infty]=\mathbb{R}_{\infty} \setminus \{ x \ | \ x \leq q\}$. And the finite product $\mathbb{R}_{\infty}^{r}$ has a countable basis built from taking finite products of these intervals: $(q_{1} ,q_{2})$ or $(q,\infty]$ where $q_{1},q_{2},q \in \mathbb{Q}$. Thus, applying \cref{fact:uniquenessmeasures}, to $\mathcal{P}$ and $\mathcal{F}$ the Borel $\sigma$-algebra of $B(\mathbb{R}_{\infty})$ we obtain uniqueness of the measure given by \cref{theo: radon and smooth in R}.(2).
    
\end{remark}

\section{Radon measures on the Berkovich analyfication and generically stable measures on $\mathrm{ACVF}$}\label{sec:radon-barycentres-stable-completion}
The purpose of this section is to establish a one to one correspondance between the finite Radon measures on the stable completion $\widehat{X}(M)$ and generically stable Keisler measures localized in $X$; where $X$ is a projective variety defined over $M \models \mathrm{ACVF}$ a spherically complete model whose value group is $\mathbb{R}$. \\

\subsection{From Borel measures in the stable completion to generically stable measures}
We recall some of the terminology of Hrushovski-Loeser in \cite[Sections~3.1, 3.4 and 14.1]{tamegeo}. Let $M \models \mathrm{ACVF}$ and let $X$ be a $M$-definable set. 
\begin{definition}
\label{def:stable-completion}
The \emph{stable completion} of $X$ over $M$, denoted $\widehat X(M),$
is the set of $M$-definable stably dominated/ generically stable types concentrating on
$X$. Equivalently, by \cite[Proposition~2.9.1] {tamegeo}, $\widehat X(M)$ consists of the global types that are $M$-definable, concentrating on $X$ and are orthogonal to the value group. 
\end{definition}

By \cite[Theorem~3.1.1]{tamegeo}, the functor
$M\mapsto\widehat X(M)$
is represented by a canonical strict pro-definable set, also denoted
$\widehat X$. We now describe briefly the topology one can induce over $\widehat{X}(M)$ whenever $X \subseteq M^{n}$.
\begin{definition}
\label{def:hl-topology-stable-completion} Let $f(x)$ be a fixed a regular function on $\mathbb{A}^{n}$ defined on a Zariski open set $W$. By orthogonality to $\Gamma_{\infty}(M)$ given $q \in \widehat{W}(M)$, the pushforward of $q$ along $v\circ f$ is a realized type in $\Gamma_{\infty}(M)$. We write $v(f(q))$ to indicate such element (whenever $q(x)\vdash f(x)=0$ then $v(f(q))=\infty$). Then each regular function $f(x)$ induces a natural map $r_{f}: \widehat{W\cap X}(M) \to \Gamma_{\infty}(M)$ given by $q$ goes to $v(f(q))$. And we define the topology on $\widehat{X}(M)$ to be the coarsest topology making such maps continuous, i.e. given $I$ an $M$-definable open subset of $\Gamma_{\infty}(M)$ the set $\mathbb{O}_{W,f,I}=\{ q \in \widehat{X}(M) \ | q \vdash x \in W \wedge \ v(f(q)) \in I\}$, is an open set of $\widehat{X}(M)$. Indeed, the family of sets $\mathbb{O}_{W,f,I}$ \footnote{where $W$ varies on the Zariski open sets, $f$ varies over the regular functions on $W$, and $I$ over the open $M$-deifnable intervals of $\Gamma_{\infty}(M)$.} forms a sub-basis for a topology on $\widehat{X}(M)$. We refer to this topology on $\widehat{X}(M)$ as the \emph{Hrushovski-Loeser topology}. 
\end{definition}
We formalize how from a Borel probability measure on the stable completion $\widehat{X}(M)$ we can construct a Keisler measure on $\mathbb{B}(M)$ centered in $X$, when $X$ is a definable subset of the affine space $\mathbb{A}_{n}$. We will then show that such measure must be generically stable.\\

We start by explaining how quantifier elimination in $\mathrm{ACVF}$ implies that the $M$-realizations of all $M$-definable subsets of $D$ are Borel in $\widehat{X}(M)$.

\begin{definition}\label{def:basicdef} Let $M \models \mathrm{ACVF}$ and $X$ be an $M$-definable set.  If $D\subseteq X$ is $M$-definable, we write
$[D]_{\widehat X(M)}
:=
\{p\in\widehat X(M) \ | \ D(x)\in p\}.$
\end{definition}

\begin{lemma}
\label{lem:borel-incidence-over-base}
Let $D\subseteq X$ be $M$-definable. Then
$[D]_{\widehat X(M)}$ is a Borel subset of $\widehat X(M)$.
\end{lemma}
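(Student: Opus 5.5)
We plan to use quantifier elimination in $\ACVF$ to reduce to the case where $D$ is a Boolean combination of basic sets. The map $D\mapsto[D]_{\widehat X(M)}$ commutes with Boolean operations. Indeed, every $p\in\widehat X(M)$ is a complete global type, so $[D\cap D']=[D]\cap[D']$ and $[X\setminus D]=\widehat X(M)\setminus[D]$. Since Borel sets form a $\sigma$-algebra, and in particular a Boolean algebra, it suffices to show that $[D]_{\widehat X(M)}$ is Borel for $D$ ranging over a family of $M$-definable sets generating all $M$-definable subsets of $X$ under finite Boolean combinations. By quantifier elimination for $\ACVF$ in the valued-field sort (with the valuation, or equivalently the divisibility predicate), every $M$-definable $D\subseteq X$ is a finite Boolean combination of sets of the form $\{x\in X : f(x)=0\}$ and $\{x\in X : v(f(x))\leq v(g(x))\}$, with $f,g\in M[x]$.

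We then treat the basic sets using the maps $r_f$ of \cref{def:hl-topology-stable-completion}. For a polynomial $f$, take $W=\mathbb A^n$; then $r_f\colon\widehat X(M)\to\Gamma_\infty(M)$, $q\mapsto v(f(q))$, is continuous by the definition of the Hrushovski--Loeser topology. The key observation is that, since $q$ is orthogonal to the value group, the pushforward of $q$ along $v\circ f$ is the realized type $v(f(q))$. Hence $q\vdash f(x)=0$ if and only if $r_f(q)=\infty$, and so $[f=0]=r_f^{-1}(\{\infty\})$. This set is closed, since $\{\infty\}$ is closed in $\Gamma_\infty(M)$. Similarly, orthogonality to $\Gamma$ implies that $q$ determines the pair $(v(f(x)),v(g(x)))$ as the realized pair $(r_f(q),r_g(q))$. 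Therefore
\[
[v(f)\leq v(g)]=\{q : r_f(q)\leq r_g(q)\}=(r_f,r_g)^{-1}\bigl(\{(\gamma,\delta):\gamma\leq\delta\}\bigr).
\]
The map $(r_f,r_g)$ is continuous into $\Gamma_\infty(M)^2$, and the relation $\leq$ is closed in the order topology, so this preimage is closed.

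The main point to check carefully is the identification $q\vdash\varphi(v(f(x)),v(g(x)))$ if and only if $\varphi(r_f(q),r_g(q))$ holds. This requires that the joint pushforward of $q$ along $(v\circ f,v\circ g)$ is realized, not merely that each coordinate is. We will derive it from \cref{def:measure-orthogonal-gamma} in its type form: the pushforward of $q$ to $\Gamma^2$ is an $M$-definable type orthogonal to $\Gamma$, hence realized in $\Gamma_\infty(M)$. Alternatively, it follows directly because each coordinate is realized, so the pushforward concentrates on a single point.

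If the variety $X$ is not affine, we plan to cover it by finitely many affine charts $W_i$ defined over $M$ and use the open sets $\widehat{W_i\cap X}(M)$. Intersecting with each chart gives a finite union of sets that are Borel by the affine argument, so $[D]_{\widehat X(M)}$ is Borel in general.
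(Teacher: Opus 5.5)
Your proposal is correct and follows the paper's proof. Quantifier elimination reduces everything to the basic sets $f(x)=0$ and $v(f(x))\leq v(g(x))$. These are closed as preimages of closed sets under the continuous maps $r_f$ and $(r_f,r_g)$, and since $D\mapsto[D]_{\widehat X(M)}$ respects Boolean operations, $[D]_{\widehat X(M)}$ is Borel. For the joint-value check, use your second justification: each coordinate is realized, so $q\vdash v(f(x))=r_f(q)\wedge v(g(x))=r_g(q)$. The first one, via \cref{def:measure-orthogonal-gamma}, does not apply, because that definition concerns measures rather than types. The chart argument for non-affine $X$ is a harmless extra, since the lemma is stated in the affine setting.
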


\begin{proof}
By quantifier elimination in $\ACVF$, the set $D$ is a finite Boolean combination of sets of the
form $P(x)=0$ and 
$v(P(x))\leq v(Q(x))$,
where $P$ and $Q$ are polynomials with coefficients in $M$. Let $P \in M[\bar{x}]$. And let $r_{P}: \widehat{X}(M) \to \Gamma_{\infty}(M)$ the map defined by sending $q$ to $v(P(q))$. By the definition of the topology on $\widehat{X}(M)$, $r_{P}$ is continuous. In particular  
the sets 
\begin{align*}
[P(x)=0]_{\widehat X(M)}&=v(P)^{-1}(\{\infty\}) \ \text{and}\\
[v(P)\leq v(Q)]_{\widehat X(M)}
&=
\{q \in \widehat{X}(M):v(P(q))\leq v(Q(q))\} \end{align*}
are closed subsets of $\widehat X(M)$. It follows  that
$[D]_{\widehat X(M)}$
is Borel in $\widehat{X}(M)$. \end{proof}
Using \cref{lem:borel-incidence-over-base} we can, given any Borel probability measure $\mu$ on $\widehat X(M)$, define a Keisler measure over $M$ concentrating on a definable set $X$ as follows:

\begin{definition}
\label{def:keislermeasure}
Let $\nu$ be a Borel probability measure on $\widehat X(M)$. We
define the map $\mathbb{B}_{x}(M) \mapsto [0,1]$ as 
$\mu_{\nu,M}(D)
:=
\nu\bigl([D]_{\widehat X(M)}\bigr)$
for every $M$-definable set $D\subseteq X$.
\end{definition}

\begin{lemma}
\label{lem:local-radon-barycentre-is-measure}
The map $\mu_{\nu,M}$ is a  countably additive Keisler measure over $M$.
\end{lemma}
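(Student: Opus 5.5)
The plan is to check directly that $\mu_{\nu,M}$ is a finitely additive probability measure on $\mathbb B_x(M)$ (restricted to subsets of $X$), and then to verify countable additivity in the sense of \cite[Definition $6.1$]{genmeasures}. That is, whenever an $M$-definable set $D$ is the disjoint union of countably many $M$-definable sets $D_i$ in the sense that $D(M)=\bigsqcup_i D_i(M)$, one should have $\mu_{\nu,M}(D)=\sum_i\mu_{\nu,M}(D_i)$. Everything will be transferred to $\nu$ via the map $D\mapsto[D]_{\widehat X(M)}$, which takes values in Borel sets by \cref{lem:borel-incidence-over-base}. So the whole argument reduces to understanding how this map interacts with Boolean operations and with countable unions.

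\textbf{Step 1 (Boolean homomorphism).} Every $p\in\widehat X(M)$ is a complete global type concentrating on $X$. Hence for $M$-definable $D,E\subseteq X$ we get
\[
[D\cap E]=[D]\cap[E],\qquad [X\setminus D]=\widehat X(M)\setminus[D],\qquad [X]=\widehat X(M).
\]
In particular disjoint definable sets go to disjoint Borel sets. Since $\nu$ is a probability measure, finite additivity of $\mu_{\nu,M}$ and $\mu_{\nu,M}(X)=1$ follow at once.

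\textbf{Step 2 (countable additivity).} The difficulty is that $D(M)=\bigsqcup_i D_i(M)$ is a statement about $M$-points, whereas $[D]$ is a set of global types. I would use the fact that each $p\in\widehat X(M)$ is $M$-definable and orthogonal to $\Gamma$ (\cref{def:stable-completion}), and I see two possible routes.

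\emph{First route.} Since $M$ is spherically complete with value group $\mathbb R$, one can try to show that the $M$-definable types in $\widehat X(M)$ are "approximated" by $M$-points. The intended statement is: if $D\in p$ and $p\in\widehat X(M)$, then $p$ restricted to $M$ is finitely satisfiable in $M$, which holds for generically stable types. I would also need to locate the type in a single $D_i$. I expect this to be the main obstacle, because a $p$ lying in $[D]$ need not lie in any $[D_i]$ unless countable additivity over $M$-points transfers to types.

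\emph{Second route (cleaner).} Interpret \cite[Definition $6.1$]{genmeasures} as requiring countable additivity only when the countable union is itself definable, i.e. $D=\bigcup_i D_i$ as definable sets. By compactness in the monster model this forces the union to be finite, modulo the $M$-points issue. I would therefore follow the paper's earlier usage. When $D(M)=\bigcup D_i(M)$, I would show $[D]=\bigcup_i[D_i]$ by arguing via Hrushovski–Loeser: the points of $\widehat X(M)$ correspond to points of $X^{\an}$, and the $M$-points $X(M)$ are dense in $\widehat X(M)$. Using the valuative description from quantifier elimination, for a type $p\in[D]\setminus\bigcup_i[D_i]$ I would produce an $M$-point in $D\setminus\bigcup_i D_i$. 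That would contradict the hypothesis, and it is the crux.

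Granting $[D]=\bigsqcup_i[D_i]$, $\sigma$-additivity of $\nu$ gives
\[
\mu_{\nu,M}(D)=\sum_i\mu_{\nu,M}(D_i),
\]
which finishes the proof.
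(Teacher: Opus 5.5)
Your Step 1 is exactly the paper's argument. The gap is in Step 2. You correctly isolate what countable additivity in the sense of \cite[Definition~6.1]{genmeasures} (unions computed on $M$-points) requires, namely $[D]_{\widehat X(M)}=\bigsqcup_i[D_i]_{\widehat X(M)}$ whenever $D(M)=\bigsqcup_i D_i(M)$. You only grant this identity, and neither of your routes can supply it, because it fails in general.

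Here is a counterexample. Take $M=k((t^{\mathbb R}))$ with $k$ a countable algebraically closed field; this is spherically complete, algebraically closed, and has $\Gamma(M)=\mathbb R$. Let $X=\mathcal O$, let $(a_i)_{i\in\mathbb N}$ represent the residue classes, and put $D_i=a_i+\mathfrak m$. Then $\mathcal O(M)=\bigsqcup_i D_i(M)$. However, the generic type $p$ of $\mathcal O$ (the Gauss point) lies in $[\mathcal O]_{\widehat X(M)}$ and in no $[D_i]_{\widehat X(M)}$, since $v(x-a_i)=0$ for $x\models p$. For $\nu=\delta_p$ this gives $\mu_{\nu,M}(\mathcal O)=1\neq 0=\sum_i\mu_{\nu,M}(D_i)$.

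This also shows why your proposed tools cannot close the step. Density of $X(M)$ only yields $M$-points in open neighbourhoods of $p$, whereas $[D]\setminus\bigcup_i[D_i]$ is merely Borel, and in the example every $M$-point near $p$ lies in some $D_i$. Finite satisfiability (your first route) likewise only lets you avoid finitely many $D_i$ at a time.

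The paper's proof is your Step 1 plus the assertion $[\bigcup_iE_i]_{\widehat X(M)}=\bigcup_i[E_i]_{\widehat X(M)}$. That equality is immediate only when the union is formed among definable sets of $\U$. Then saturation leaves only finitely many nonempty $E_i$ (your own compactness remark), and completeness of types finishes the argument. Under that reading you were already done after Step 1. But that notion of countable additivity is vacuous, and it is not what \cref{theo: radon and smooth in R}.(1) consumes through \cref{fact:imageiscountablyadditive}.

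What is needed there, and what is true, is countable additivity over $\Ri$-points of the pushforwards $\lambda_{\nu,g}$. The missing idea is to actually use the orthogonality to $\Gamma$ you mention at the start of Step 2, rather than density of $M$-points. For $p\in\widehat X(M)$ and an $M$-definable $g\colon X\to\Gamma_\infty^r$, the pushforward $g_*p$ is realized by a point $g(p)\in\Gamma_\infty(M)^r=\Ri^r$. Hence $E(\Ri)=\bigsqcup_iE_i(\Ri)$ forces $[X\cap g^{-1}(E)]_{\widehat X(M)}=\bigsqcup_i[X\cap g^{-1}(E_i)]_{\widehat X(M)}$. Then $\sigma$-additivity of $\nu$ gives $\lambda_{\nu,g}(E)=\sum_i\lambda_{\nu,g}(E_i)$.
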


\begin{proof}
By Lemma~\ref{lem:borel-incidence-over-base}, the map is
well-defined. The function
$D\mapsto[D]_{\widehat X(M)}$
respects Boolean operations. Indeed, since the elements of
$\widehat X(M)$ are complete types we have that for any definable sets $D,E\subseteq X$ 
$[D\cap E]_{\widehat X(M)}=[D]_{\widehat X(M)}\cap[E]_{\widehat X(M)}$. Similarly, $[X\setminus D]_{\widehat X(M)}=\widehat X(M)\setminus[D]_{\widehat X(M)}$. Lastly, given $(E_{i})_{i \in \mathbb{N}}$ a countable family of pairwise disjoint $M$-definable sets, such that their union $E=\bigcup_{i \in \mathbb{N}}E_{i}$ is also $M$-definable, then 
$[E]_{\widehat{X}(M)}=[\bigcup_{i \in \mathbb{N}} E_{i}]_{\widehat X(M)}
=\bigcup_{i \in \mathbb{N}}[E_{i}]_{\widehat X(M)}$.  Countable additivity of $\mu_{\nu,M}$ therefore follows from  $\sigma$-additivity of $\nu$, and  $\mu_{\nu,M}(X)=\nu(\widehat X(M))=1$. 
\end{proof}

We will also need to consider the pushforward of the Keisler measure constructed in \cref{lem:local-radon-barycentre-is-measure} via a definable map onto the valued group. Let $\mathcal L_\Gamma:=\{+, \leq, 0 \}$ and  $B\subseteq \Gamma_\infty(\U)$ a set of parameters. We write $\mathbb B_y^{\Gamma}(B)$ to denote the boolean algebra of definable sets in the structure $(\Gamma_\infty(\U), +, \leq , 0)$ with parameters from $B$.  

\begin{definition}\label{def:localizedpushforwards} Let $g \colon X \mapsto \Gamma_{\infty}^{r}$ be an $M$-definable map. Let $\lambda_{\nu,g}$ be the Keisler measure obtained on $\mathbb{B}^{\Gamma}_{r}(M)$ given by taking the pushforward of $\mu_{\nu,M}$ along the map $g$. More precisely,
for every $\Gamma(M)$-definable set $E\subseteq\Gamma_\infty^r$, let  $\lambda_{\nu,g}(E):=\mu_{\nu,M}\bigl(X\cap g^{-1}(E)\bigr)$. We refer to $\lambda_{\nu,g}$ as the \emph{localized pushforward of $\mu_{\nu,M}$ along $g$.}
\end{definition}
Notice that $\lambda_{\nu,g}(\Gamma_\infty^r)=\mu_{\nu, M}(X);$
in particular, $\lambda_{\nu,g}$ is in general a finite measure on $\mathbb B_y^{\Gamma}(\Gamma_{\infty}(M))$
rather than a probability measure. We extend the terminology of
Definition~\ref{def:smooth} to finite measures in the usual way: a nonzero finite measure is called \emph{smooth} if its normalization
is smooth, and the zero measure is smooth by convention.\\

A fundamental, but rather straighforward, fact we will need is the following:
\begin{fact}\label{fact:imageiscountablyadditive} Since the Keisler measure $\mu_{\nu,M}$ on $M$ defined in \cref{def:keislermeasure} is countably additive (\emph{c.f} \cref{lem:local-radon-barycentre-is-measure}), each of the localized pushforwards $\lambda_{\nu,g}$ defined in \cref{def:localizedpushforwards} is also countably additive over $\Gamma_\infty(M)$.
\end{fact}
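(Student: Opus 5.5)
The plan is to unwind the definitions and reduce everything to the countable additivity of $\mu_{\nu,M}$ established in \cref{lem:local-radon-barycentre-is-measure}. By \cref{def:localizedpushforwards}, $\lambda_{\nu,g}(E)=\mu_{\nu,M}(X\cap g^{-1}(E))$ for every $\Gamma(M)$-definable $E\subseteq\Gamma_\infty^r$. Countable additivity over $\Gamma_\infty(M)$ (in the sense of \cite[Definition 6.1]{genmeasures}) asks for the following: whenever $(E_i)_{i\in\mathbb N}$ is a family of pairwise disjoint $\Gamma_\infty(M)$-definable sets whose union $E$ is also $\Gamma_\infty(M)$-definable, we have $\lambda_{\nu,g}(E)=\sum_i\lambda_{\nu,g}(E_i)$. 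Here the union is interpreted in $\Gamma_\infty(M)$-points, that is, on the standard model, and this is the convention used for $\mu_{\nu,M}$.

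The key steps are as follows. First, since $g$ is $M$-definable and $\Gamma$ is stably embedded, each $X\cap g^{-1}(E)$ and $X\cap g^{-1}(E_i)$ is an $M$-definable subset of $X$. Preimages preserve disjointness, so these sets are pairwise disjoint. Second, we check that $X\cap g^{-1}(E)=\bigcup_i X\cap g^{-1}(E_i)$ holds at the level relevant for $\mu_{\nu,M}$, namely $[X\cap g^{-1}(E)]_{\widehat X(M)}=\bigcup_i[X\cap g^{-1}(E_i)]_{\widehat X(M)}$. For this, take $q\in\widehat X(M)$. Since $q$ is orthogonal to the value group (\cref{def:stable-completion}), the pushforward $g_*q$ is a realized type in $\Gamma_\infty(M)^r$, say the point $c$, exactly as with $v(f(q))$ in \cref{def:hl-topology-stable-completion}. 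Then $q\in[g^{-1}(E)]$ if and only if $c\in E(\Gamma_\infty(M))$. This holds if and only if $c\in E_i(\Gamma_\infty(M))$ for a (unique) $i$, which holds if and only if $q\in[g^{-1}(E_i)]$. Third, we apply $\sigma$-additivity of $\nu$, as in the proof of \cref{lem:local-radon-barycentre-is-measure}, to obtain $\lambda_{\nu,g}(E)=\nu([g^{-1}(E)])=\sum_i\nu([g^{-1}(E_i)])=\sum_i\lambda_{\nu,g}(E_i)$. Finite additivity and $\lambda_{\nu,g}(\Gamma_\infty^r)=\mu_{\nu,M}(X)$ are immediate.

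The only genuine point, and the step I expect to be the main obstacle, is the second one. A countable union of definable sets equal in $M$-points need not be equal in the monster, so we cannot argue purely syntactically. The argument must go through the fact that every type in $\widehat X(M)$ has a \emph{realized} image in $\Gamma_\infty(M)$. It is exactly this property that makes the $M$-points of $E$ decide membership. To justify it, I would first reduce $g$ coordinatewise to the form $v\circ f$ composed with $\Gamma(M)$-definable functions, using quantifier elimination and \cref{lem:finite-gamma-normal-form}. Alternatively, I would invoke directly that a generically stable type orthogonal to $\Gamma$ pushes forward to a realized type under any $M$-definable map into $\Gamma_\infty$.
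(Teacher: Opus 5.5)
Your proof is correct, but it does not follow the paper's route, and the difference matters. The paper regards the statement as immediate from \cref{lem:local-radon-barycentre-is-measure}: pull a countable partition of $E(\Gamma_\infty(M))$ back along $g$, then invoke countable additivity of $\mu_{\nu,M}$ for arbitrary $M$-definable families. You instead go back to $\sigma$-additivity of $\nu$. You prove the identity $[X\cap g^{-1}(E)]_{\widehat X(M)}=\bigcup_i[X\cap g^{-1}(E_i)]_{\widehat X(M)}$, a disjoint union, only for preimages of value-group sets. Your key input is that $g_*q$ is realized at a point of $\Gamma_\infty(M)^r$ for every $q\in\widehat X(M)$.

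The paper's version is shorter, but it rests on an identity that is false in the generality used. The proof of \cref{lem:local-radon-barycentre-is-measure} asserts $[\bigcup_iE_i]_{\widehat X(M)}=\bigcup_i[E_i]_{\widehat X(M)}$ for arbitrary $M$-definable $E_i$, with the union taken on $M$-points. Here is a counterexample:
\begin{itemize}
\item Take $\kres(M)$ countable, for instance $M=\overline{\mathbb F}_p((t^{\mathbb R}))$, which is spherically complete with value group $\mathbb R$.
\item Let $X=\mathcal O$. The countably many residue balls $c+\mathfrak m$ partition $\mathcal O(M)$.
\item The generic type of $\mathcal O$ (the Gauss point) lies in $[\mathcal O]_{\widehat X(M)}$ but in none of the $[c+\mathfrak m]_{\widehat X(M)}$.
\item So $\mu_{\nu,M}$ is not countably additive when $\nu$ is the Dirac measure at that point.
\end{itemize}
If the unions are read in $\U$ instead, the lemma reduces by compactness to finite additivity. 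It then does not give the notion of \cite[Definition 6.1]{genmeasures} required by \cref{theo: radon and smooth in R}.(1).

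Your realized-pushforward argument is precisely what makes the value-group case work, and you correctly identified it as the crux. So your route is the one that actually supports the use of this statement in \cref{thm:radon-barycentre-criterion}. One minor point: stable embeddedness plays no role in your first step, since $X\cap g^{-1}(E)$ is $M$-definable simply because $g$ and $E$ are.
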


We now recall the definition of a bounded definable set $X \subseteq M^{n}$ from \cite[Section $4.2$]{tamegeo}.
\begin{definition}\label{def:bounded}Let $M \models \mathrm{ACVF}$ and $X \subseteq M^{n}$ an $M$-definable subset in the valued field sort. We say that $X$ is \emph{bounded} if for some $\gamma \in \Gamma(M)$ then $X$ is contained in $\{(x_{1},\dots, x_{n}): v(x_{i})\geq \gamma, 1 \leq i \leq n\}$.  We can extend this notion for $X \subseteq V$ where $V$ is a projective variety defined over $M$. In such case, there are affine open covers $V=\bigcup_{i \leq n} U_{i}$, and we say that $X$ is bounded if there are bounded subsets $X_{i} \subseteq U_{i}$ such that $X=\bigcup_{i\leq n} X_{i}$. 
\end{definition}

The following two facts illustrate the powerful consequences of quantifier elimination.
\begin{fact}\label{fact:imageofboundedisbounded} If $X$ is an $M$-definable bounded set and $g: M^{n} \rightarrow \Gamma_{\infty}$ is an $M$-definable function, where $g=v\circ f$ for some regular function $f$ on the affine space $\mathbb{A}^{n}$. Then $V=g(X)$ is always bounded in $\Gamma_{\infty}(M)$.
\end{fact}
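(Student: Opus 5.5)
Since $X$ is bounded, there is $\gamma\in\Gamma(M)$ with $v(x_i)\geq\gamma$ for every $x=(x_1,\dots,x_n)\in X$ and every $i$ (for projective $V$ we argue chart by chart and take the finite union). The plan is to bound $v(f(x))$ from below on $X$ using the ultrametric inequality, and then conclude that $g(X)$ lies in a closed box of $\Gamma_\infty$.

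\textbf{Step 1: a lower bound for $v(f(x))$.} Write $f=\sum_{\alpha}c_\alpha x^\alpha$ with $c_\alpha\in M$ and finitely many monomials. For $x\in X$,
\[v\bigl(c_\alpha x^\alpha\bigr)=v(c_\alpha)+\sum_i\alpha_i v(x_i)\geq v(c_\alpha)+|\alpha|\gamma,\]
using $\gamma\leq 0$ without loss of generality when $|\alpha|>0$. Set
\[\delta:=\min_\alpha\bigl(v(c_\alpha)+|\alpha|\gamma\bigr)\in\Gamma(M),\]
taken over the finitely many $\alpha$ with $c_\alpha\neq 0$. By the ultrametric inequality, $v(f(x))\geq\delta$ for every $x\in X$. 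If $f=0$, then $g(X)\subseteq\{\infty\}$ and the claim is trivial.

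\textbf{Step 2: the box.} By Step 1, $g(X)\subseteq[\delta,\infty]$, which is a closed box in $\Gamma_\infty$, i.e.\ a bounded set in the sense of \cite{tamegeo}. Since $\delta\in\Gamma(M)$, this containment holds uniformly, in $\U$ as well as in $M$.

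\textbf{The one subtlety: the projective case.} A regular function on a chart $U_i$ is a polynomial in the affine coordinates of that chart. We therefore apply Step 1 chart by chart to the bounded pieces $X_i\subseteq U_i$, obtaining bounds $\delta_i$, and take $\min_i\delta_i$. If instead $f$ is only regular on an open set $W$, i.e.\ of the form $P/Q$, then $v(f)=v(P)-v(Q)$ need not be bounded below, and the fact as stated relies on $f$ being a polynomial. I would note this restriction explicitly.
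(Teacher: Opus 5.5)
The paper states this Fact without proof; it is only introduced as an illustration of quantifier elimination. So there is no argument of the paper's to compare with, and your proof is correct and complete for the statement as given. Your route is more elementary than the one the paper hints at. It uses only the ultrametric inequality, no quantifier elimination, and gives the explicit containment $g(X)\subseteq[\delta,\infty]$ with $\delta=\min_\alpha\bigl(v(c_\alpha)+|\alpha|\gamma\bigr)\in\Gamma(M)$. This is exactly boundedness in the box sense used in the paper. The normalization $\gamma\leq 0$ is superfluous, since $\alpha_i\geq 0$ already gives $\alpha_i v(x_i)\geq\alpha_i\gamma$.

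Quantifier elimination would only be needed for general definable maps to $\Gamma_\infty$ (via \cref{rmk:quantifiereliminationisthelaw}), and there your caveat is correct. For example, $v(1/x)=-v(x)$ on the bounded set $\mathcal{O}\setminus\{0\}$ has image $(-\infty,0]$, which lies in no box. The caveat matters because the proof of \cref{thm:radon-barycentre-criterion} invokes this Fact for the map $g$ supplied by \cref{lem:finite-gamma-normal-form}. That $g$ is an arbitrary $M$-definable map into $\Gamma^r$, not of the form $v\circ f$ with $f$ a polynomial.

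Finally, your projective aside is not needed for the statement, and as phrased it fails for a single $f$ regular on one chart $U_j$. On a bounded piece $X_i\subseteq U_i$ with $i\neq j$, $f$ becomes a genuine rational function of the $U_i$-coordinates. For instance, in the chart $x_1=1$, $v(x_1/x_0)=-v(x_0)$ is unbounded below on $B_1\cap U_0\subseteq\mathbb{P}^1$. This is precisely the phenomenon you flag in your last sentence.
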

\begin{remark}\label{rmk:quantifiereliminationisthelaw}By quantifier elimination any definable function from some cartesian power of the valued field sort onto $\Gamma_{\infty}$ is piecewise a difference of functions of the form $\frac{1}{n} f-\frac{1}{m}g$ for $f$ and $g$ regular functions on $\mathbb{A}^{n}$ and $n,m \in \mathbb{N}$ (see for example \cite[Section $3.5$]{tamegeo}).
\end{remark}
\begin{fact}\label{prop:uniquenessbypushforwards}
Let $\nu$ and $\mu$ be two Kiesler measures over $M$. Assume that for every $M$-definable map $g: M^{n}\rightarrow \Gamma_{\infty}(M)$, the pushforward measures along $g$, $\nu_{g}$ and $\mu_{g}$ agree on $\mathbb{B}^{\Gamma}(\Gamma(M))$. Then $\nu=\mu$. Furthermore, by \cref{rmk:quantifiereliminationisthelaw}, we might as well restrict from all definable functions $g$ to only regular functions.

\end{fact}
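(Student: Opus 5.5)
The plan is to reduce the claim to quantifier elimination in $\ACVF$: every $M$-definable subset of the valued-field sort is determined by value-group data. By quantifier elimination, every $\mathcal L(M)$-formula $D(x)$ in valued-field variables is equivalent to a finite Boolean combination of atomic formulas $P(x)=0$ and $v(P(x))\leq v(Q(x))$, with $P,Q\in M[x]$. Given such a $D$, I will list the finitely many polynomials $P_1,\dots,P_s$ occurring in it and form the single $M$-definable map
\[
g=(v\circ P_1,\dots,v\circ P_s)\colon M^n\to\Gamma_\infty^s .
\]
Each atomic condition then becomes a condition on $g(x)$ expressible in $\mathcal L_\Gamma$: $P_i(x)=0$ is $u_i=\infty$, and $v(P_i)\leq v(P_j)$ is $u_i\leq u_j$. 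Hence there is an $\mathcal L_\Gamma$-formula $E(u)$, with no parameters or only parameters from $\Gamma(M)$, such that $D(x)$ is equivalent to $E(g(x))$, i.e. $D=g^{-1}(E)$.

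Using this, $\nu(D)=\nu_g(E)=\mu_g(E)=\mu(D)$, where the middle equality is the hypothesis that the pushforwards along $g$ agree on $\mathbb B^\Gamma(\Gamma(M))$. The hypothesis is stated for maps into $\Gamma_\infty$ rather than $\Gamma_\infty^s$, so I will either read it as covering tuples of such maps, or reduce to one coordinate at a time.

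The main obstacle is this reduction from a tuple $g$ to single maps. Agreement of the $s$ one-dimensional marginals does not determine a joint measure, so I cannot simply use coordinates. Instead I will use that each atomic formula is itself the preimage of a one-variable set under a single map. The formula $v(P)\leq v(Q)$ is equivalent to $v(P/Q)\leq 0$ on $Q\neq0$, together with a case split when $Q=0$. Since a definable map may be taken piecewise (\cref{rmk:quantifiereliminationisthelaw}), $x\mapsto v(P(x))-v(Q(x))$, suitably defined on the zero loci, is an $M$-definable map into $\Gamma_\infty$. This shows that $\mu$ and $\nu$ agree on every atomic set. For the Boolean combinations I would need agreement on intersections; for a conjunction $v(P_1)\leq v(Q_1)\wedge v(P_2)\leq v(Q_2)$, I will handle it with the single definable map $\max\bigl(v(P_1)-v(Q_1),\,v(P_2)-v(Q_2)\bigr)$ into $\Gamma_\infty$, whose preimage of $[-\infty,0]$ is the conjunction.

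Iterating max/min combinations of these differences of valuation functions covers all finite intersections, so by inclusion–exclusion $\mu$ and $\nu$ agree on every Boolean combination. For the ``furthermore'' part, \cref{rmk:quantifiereliminationisthelaw} says every definable $g$ is piecewise a difference of valuations of regular functions. So agreement along the maps $v\circ f$ already determines agreement along all the combined maps above, since each preimage condition is a Boolean combination of conditions of the form $v(f)\leq v(h)$ and $f=0$, which were treated in the previous step.
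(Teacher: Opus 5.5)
Your first paragraph is the right proof and is essentially the paper's: quantifier elimination, then pushforward along the valuations of the polynomials occurring in $D$. The paper also reads the hypothesis as covering tuples, since it uses $x\mapsto(v(Q_1(x)),v(Q_2(x)))$ into $\Gamma_\infty^2$. Its written argument, however, treats each kind of atomic set with its own map and then stops. Agreement of two finitely additive measures on the atomic sets alone does not give agreement on their intersections. Your single tuple $g=(v\circ P_1,\dots,v\circ P_s)$ with $D=g^{-1}(E)$ closes exactly that step. Since the $P_i$ are polynomials, it also proves the \emph{furthermore} clause in its tuple form.

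You should drop the single-map detour. For the main claim it is unnecessary. The map equal to $0$ on $D$ and to $\infty$ off $D$ is $M$-definable with $D=g^{-1}(\{0\})$, so allowing arbitrary single maps makes the statement trivial. For the \emph{furthermore} clause your last paragraph is circular. Agreement on $\{v(P)\le v(Q)\}$ and on intersections came from the piecewise maps $v(P)-v(Q)$ and their maxima, which are not of the form $v\circ f$.

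Read with single maps $v\circ f$, $f\in M[x]$, that clause is in fact false. Choose $t_1,t_2$ with $0<n\,v(t_1)<v(t_2)<\gamma$ for all $n\in\mathbb N$ and all $\gamma\in\Gamma(M)_{>0}$, and put $p=\tp(t_1,t_2/M)$ and $q=\tp(t_2,t_1/M)$. For $0\neq f=\sum c_{ij}x_1^ix_2^j$, the nonzero terms $c_{ij}t_1^it_2^j$ have pairwise distinct valuations, and likewise with $t_1,t_2$ swapped. Let $c=\min v(c_{ij})$. At both $p$ and $q$, $v(f)$ equals $c$ if the constant term attains this minimum, and $c$ plus a positive infinitesimal otherwise. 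These values realize the same type over $\Gamma(M)$. Thus $\delta_p$ and $\delta_q$ have the same pushforward along every $v\circ f$, while the formula $v(x_1)<v(x_2)$ separates them. This is precisely your own remark that marginals do not determine the joint law. The correct refinement is to tuples of valuations of polynomials, which your first paragraph already handles.
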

\begin{proof}
For $(1)$ by quantifier elimination any definable set $D$ is a boolean combination of formulas of the form $P(x)=0$ and $v(Q_{1}(x)) \leq v(Q(x))$ for $P, Q_{1}(x),Q_{2}(x)$ polynomials with coefficients in $M$. Note the first condition is equivalent to $v(P(x))=\infty$. Take $g=v\circ P$. By hypothesis $\mu\big(\{ x : P(x)=0\}\big)=\mu_{g}\big(\{\infty\}\big)=\nu_{g}\big(\{\infty\}\big)=\nu \big( \{ x : P(x)=0\big).$\\
Similarly, let $g: M^{n}\to \Gamma_{\infty}^{2}(M)$ be the map given by sending $x$ to $(v(Q_{1}(x)),v(Q_{2}(x)))$. Write $D=\{ (\gamma,\beta) \in \Gamma_{\infty}(M)^{2} \ | \ \exists x \in M \ v(P(x))=\gamma \wedge v(Q(x))=\beta\}$. Likewise,\\
$ \mu\big(\{ x : v(Q_{1}(x)) \leq v(Q_{2}(x))\}\big)=\mu_{g}(D)=\nu_{g}(D)=\nu \big(\{ x : v(Q_{1}(x)) \leq v(Q_{2}(x))\}.$
\end{proof}

\begin{theorem}
\label{thm:radon-barycentre-criterion} Let $M \models \mathrm{ACVF}$ and assume that $\Gamma(M)=(\mathbb R,+, \leq, 0)$. Let $X \subseteq M^{n}$ an $M$-definable set in the valued field sort that is bounded. Let $\nu$ be a Borel probability measure on $\widehat{X}(M)$, equipped with the Hrushovski Loeser topology. Then, for every finite tuple
$e\in\Gamma(\U)^m$, the measure $\mu_{\nu,M}$ has a unique extension to $Me$, so that every global $M$-invariant extension of
$\mu_{\nu,M}$ is orthogonal to $\Gamma$.  In particular, $\mu_{\nu,M}$ has a
unique global $M$-invariant extension $\mu_\nu$ which is
generically stable over $M$.
\end{theorem}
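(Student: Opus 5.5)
Fix $e\in\Gamma(\U)^m$ and an $\mathcal L(M)$-formula $\phi(x;y)$ with $y$ ranging over $\Gamma^m$. We show that $\phi(x;e)$ is $\mathbb B_x(M)$-approximated with respect to any extension of $\mu_{\nu,M}$. By \cref{lem:unique-extension-criterion} this gives uniqueness of the extension to $Me$.

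First apply \cref{lem:finite-gamma-normal-form} to $\phi$, using stable embeddedness and elimination of imaginaries of $\Gamma_\infty$ (\cref{fact:acvf-stably-embedded-sorts}). This produces an $M$-definable map $g\colon X\to S\subseteq\Gamma_\infty^r$ and an $\mathcal L_\Gamma(\Gamma(M))$-formula $\theta(u;y)$ such that $\phi(x;e)\leftrightarrow\theta(g(x);e)$ on $X$. By \cref{rmk:quantifiereliminationisthelaw} we may take $g$ piecewise built from valuations of regular functions. Since $X$ is bounded, \cref{fact:imageofboundedisbounded} shows that $g(X)$ lies in a closed box $B\subseteq\Gamma_\infty^r$; after splitting $X$ into finitely many $M$-definable pieces if necessary, we may assume this for each piece. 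Hence it suffices to approximate $\theta(u;e)$ by $\Gamma(M)$-definable sets with respect to the localized pushforward $\lambda_{\nu,g}$ of \cref{def:localizedpushforwards}. Pulling approximations back along $g$ and intersecting with $X$ gives $M$-definable inner and outer approximations of $\phi(x;e)$ with the same measure gap, since $\mu(X\cap g^{-1}(E))=\lambda_{\nu,g}(E)$.

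By \cref{fact:imageiscountablyadditive}, $\lambda_{\nu,g}$ is a countably additive Keisler measure over $\Gamma_\infty(M)=\R_\infty$ concentrating on the box $B$. \cref{theo: radon and smooth in R}.(1) then makes $\lambda_{\nu,g}$ smooth, so its extension to $\R_\infty e$ is unique. Another application of \cref{lem:unique-extension-criterion}, now inside the structure $\Gamma_\infty$, gives $\R_\infty$-definable $E_-\subseteq\theta(\Gamma;e)\cap B\subseteq E_+$ with $\lambda_{\nu,g}(E_+\setminus E_-)<\epsilon$. The preimages $X\cap g^{-1}(E_\pm)$ are the required $M$-definable sandwiches. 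The one delicate point is that extensions of $\mu_{\nu,M}$ to $Me$ push forward along $g$ to extensions of $\lambda_{\nu,g}$ to $\R_\infty e$; this holds because $g$ is $M$-definable and $\Gamma$ is stably embedded. I expect this reduction to be the main place where care is needed, in particular handling the normalization when $\lambda_{\nu,g}$ is not a probability measure, and the pieces on which $g$ takes the value $\infty$.

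Next, orthogonality. Any global $M$-invariant extension $\mu$ of $\mu_{\nu,M}$ has $\mu\rest_M=\mu_{\nu,M}$, which has a unique extension to each $Me$, so $\mu$ is orthogonal to $\Gamma$ by \cref{def:measure-orthogonal-gamma}. Global $M$-invariant extensions exist by \cref{fact:global-invariant-extension}, since $\ACVF$ is NIP.

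For uniqueness of the global $M$-invariant extension, I would argue as follows. By \cref{prop:orthogonal-then-stably-dominated}, assuming $M$ is spherically complete as in the ambient setting (which holds when $\Gamma(M)=\R$ together with the standing hypothesis of this section), any such extension is stably dominated. It is therefore determined by its restriction to $\DEFST_M(\U)$. On $\DEFST_M(\U)$ the values are given by stable pushforwards $\mu_f$. These are $M$-invariant measures on stable stably embedded sets, hence generically stable and determined by their restrictions to $M$, because in a stable theory an invariant measure over a model is the unique definable extension of its restriction. Those restrictions are fixed by $\mu_{\nu,M}$. So two such extensions agree on $\DEFST_M(\U)$ and hence everywhere. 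Generic stability then follows from \cref{thm:main-generically-stable-measures-acvf}, direction $(3)\rightarrow(2)$.
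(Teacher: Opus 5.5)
Your proposal is correct, modulo one step you share with the paper, which I come back to at the end.

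\textbf{Where you follow the paper.} The first half is the paper's Claim: you push $\phi(x;e)$ forward along the map $g$ of \cref{lem:finite-gamma-normal-form}, then use countable additivity and \cref{theo: radon and smooth in R}(1). The orthogonality step is also the same. The paper compares the pushforwards $\lambda_1,\lambda_2$ of two extensions $\eta_1,\eta_2$ directly. You instead build $M$-definable sandwiches $X\cap g^{-1}(E_\pm)$ and appeal to \cref{lem:unique-extension-criterion}. In your formulation the ``delicate point'' you flag is not actually needed, since only $M$-definable sets ever get measured.

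\textbf{Where you diverge.} The real difference is in uniqueness of the global $M$-invariant extension.
\begin{itemize}
\item The paper shows that one chosen extension $\mu_\nu$ is generically stable, via \cref{thm:main-generically-stable-measures-acvf}. It then quotes the cited fact that a generically stable $M$-invariant measure is the unique global $M$-invariant extension of its restriction to $M$.
\item You use only (3)$\Rightarrow$(1). Any two invariant extensions $\mu,\mu'$ are stably dominated. They agree on $\DEFST_M(\U)$ because the stable pushforwards $\mu_f,\mu'_f$ are $M$-definable (\cref{lem: babycontinuity}) and have the same restriction to $M$. An $M$-definable measure is determined by its restriction to $M$, by continuity of the defining maps and density of realized types in $S_z(M)$.
\end{itemize}
Your route mirrors the classical proof that stably dominated types are stationary, and it shows exactly where stability enters; the paper's one-line appeal is shorter. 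To close your argument, note that special pushforward stable formulas are closed under conjunction. Agreement on them therefore gives agreement on all of $\DEFST_M(\U)$ by inclusion--exclusion.

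\textbf{Two points that affect the paper's proof equally.}

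First, spherical completeness does not follow from $\Gamma(M)=\R$. It is the standing hypothesis of the section, and the paper's proof needs it as well, through \cref{thm:main-generically-stable-measures-acvf}.

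Second, \cref{fact:imageofboundedisbounded} covers only maps of the form $v\circ f$. The canonical-parameter map, or a piecewise difference $\tfrac1n v(f)-\tfrac1m v(h)$, can leave every box. For example, $-v(x)$ on $\mathcal O\setminus\{0\}$ is the natural canonical parameter of $\{y : y+v(x)\leq 0\}$ and is unbounded below. Splitting $X$ into pieces does not help.

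The clean repair is to take $g=(v(P_1),\dots,v(P_k))$ for the polynomials occurring in a quantifier-free form of $\phi(x;y)$. Then $\phi(x;y)\leftrightarrow\theta(g(x);y)$ for an $\mathcal L_\Gamma(\Gamma(M))$-formula $\theta$. Boundedness of $X$ gives $g(X)\subseteq[\gamma,\infty]^k$ for some $\gamma\in\R$. So \cref{theo: radon and smooth in R}(1) applies directly, and \cref{lem:finite-gamma-normal-form} is no longer needed.
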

\begin{proof} We start with the following claim.
\begin{claim} The measure $\mu_{v,M}$ has a unique extension for every finite tuple $e \in \Gamma(\U)^{m}$.
\end{claim}
\begin{claimproof} Fix a finite tuple
$e\in\Gamma(\U)^m.$
Let $\eta_1$ and $\eta_2$ be two extensions of $\mu_{\nu,M}$ to
$Me$. Let $\phi(x,y)$ be a $\mathcal{L}(M)$ formula where $y$ is a variable ranging in $\Gamma$. Thus  $\varphi(x;e)$ is an arbitrary $\mathcal{L}(Me)$-formula. 
 By \cref{lem:finite-gamma-normal-form}, there are an
$M$-definable set
$S\subseteq\Gamma^r,$ an $M$-definable map
$g:X\mapsto S,$
and a formula $\theta(u;y)$
in the pure language of divisible ordered abelian groups, with
parameters from $\Gamma(M)$, such that
\[\U \models  \forall x \in X \forall y \in \Gamma^{m} \ \big(  \varphi(x;y)
\leftrightarrow
\theta(g(x);y)\big).\] 
In particular, $\U \models \forall x \in X \big(\phi(x,e)\leftrightarrow \theta(g(x),e)\big)$. 
We will now take pushforwards of the measures under $g$. Let $\lambda_{\nu,g}$ be the pushforward of $\mu_{\nu, M}$ under $g$, and let $\lambda_1$ and $\lambda_2$ be the pushforwards of $\eta_1$ and $\eta_2$ under
$g$. Then $\lambda_{\nu,g}$ is a Keisler measure on $\mathbb B^{{\Gamma}}_y (\mathbb R_{\infty})$ and $\lambda_1$ and $\lambda_2$ are Keisler measures on $\mathbb B^{\Gamma}_y (\mathbb R_{\infty}\cup \{e\})$ extending $\lambda_{\nu,g}$. Since $X$ is bounded, by \cref{fact:imageofboundedisbounded} so it is $V=g(X)$. By \cref{fact:imageiscountablyadditive} the measure $\lambda_{g,\nu}$ is a countably additive measure on $V$. By \cref{theo: radon and smooth in R}.(1) we have $\lambda_1=\lambda_2$.

Let
$E_e:=\{g(x)\in S\ | \ \theta(g(x);e) \ \text{and}\ x\in X \}$ hence $\eta_1(\varphi(x;e))
=
\lambda_1(E_e)
=
\lambda_2(E_e)
=
\eta_2(\varphi(x;e)).$
Since $\varphi(x;e)$ was arbitrary, we conclude that
$\eta_1=\eta_2.$
Thus $\mu_{\nu,M}$ has a unique extension to $Me$. \end{claimproof} 

\medskip

 By
\cref{fact:global-invariant-extension}, we can choose a global $M$-invariant extension
$\mu_\nu$ of $\mu_{\nu,M}$. By Claim~$1$ for every finite
tuple $e$ from $\Gamma(\U)$, the restriction $\mu\rest_{M}=\mu_{\nu,M}$
has a unique extension to $Me$. Hence, $\mu_\nu$ 
is orthogonal to $\Gamma$.
By \cref{thm:main-generically-stable-measures-acvf}, $\mu_\nu$ is generically stable over $M$, therefore $M$-invariant and finitely satisfiable over $M$. By \cite[Proposition~3.3]{genmeasures}, a generically stable
$M$-invariant measure is the unique global $M$-invariant extension
of its restriction to $M$. So $\mu_\nu$ is the unique
global $M$-invariant extension of $\mu_{\nu,M}$ and it is generically stable, as required.
\end{proof}

\subsection{From generically stable measures to Radon measures on the Berkovich analyfication}
In this section, we will show that for a model $M \models \mathrm{ACVF}$ which is spherically complete and with value group $(\R, +, \leq, 0)$ any generically stable measure $\mu$ on a projective variety $X$ defined over $M$ induces a Radon measure on its Berkovich analyfication $X^{an}$, as defined in \cite{Berkovich}.\\
We recall the Hrushovki-Loeser topology on $\widehat{X}(M)$ over projective varieties in a compatible way with their sheaf structure (this generalizes \cref{def:hl-topology-stable-completion} which only captures the affine case). 
\begin{definition}\label{def:hl-topology-stable-completion proj} Let $X$ be an algebraic variety defined over $M$, and let $\mathcal{O}_{X}$ be the sheaf structure of $X$. Let $W\subseteq V$ be an $M$-definable Zariski open subset
and let $f \in \mathcal{O}_{X}(W)$ (i.e. a regular function in the affine case and a regular function on a chart in the projective case). For
$p\in\widehat W(M)$, orthogonality to $\Gamma$ implies that the
pushforward of $p$ under
$x\mapsto v(f(x))$ is an element of $\Gamma_\infty(M).$ We denote
this point by $v(f(p)).$ The topology on $\widehat X(M)$ is the coarsest topology for which all
maps
$v(f)\colon  
\widehat W(M)\mapsto \Gamma_\infty(M)$
are continuous, for all $W$ Zariski open on $X$ and $f \in \mathcal{O}_{X}(W)$. Equivalently, the sets
$\{
p\in\widehat W(M) \ | \ 
v(f(p))\in I
\}_{I,f,W}$
where $I\subseteq\Gamma_\infty(M)$ is open in the order topology, $W\subseteq X$ is Zariski open and $f \in \mathcal{O}_{X}(W)$ form a sub-basis. We refer to this topology as the \emph{Hrushovski-Loeser topology}. 
\end{definition}
One of the model theoretic view points of the work of Hrushovski-Loeser, is that for $X$ is a projective variety, defined over a spherically complete model $M$ of $\mathrm{ACVF}$ with value group $(\mathbb{R},+,\leq, 0)$, the stable completion $\widehat X(M)$ is homeomorphic to the Berkovich analyfication $X^{an}$ defined in \cite{Berkovich}. More precisely, 
\begin{fact}
\label{fact:stable-completion-berkovich}
Assume that $M$ is a spherically complete algebraically closed valued field and that $\Gamma(M)=(\mathbb R,+,\leq, 0).$
Let $X$ be an algebraic variety over $M$. Then there is a canonical
homeomorphism
$\widehat X(M)
\ \simeq\
X^{\mathrm{an}},$
where $X^{\mathrm{an}}$ denotes the Berkovich analytification of $X$.
\end{fact}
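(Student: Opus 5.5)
The homeomorphism will be built by sending a generically stable type to its multiplicative seminorm, and the work is in checking that this map is well defined, bijective and continuous in both directions. Throughout I would work affine-locally and glue, since both $\widehat X(M)$ (\cref{def:hl-topology-stable-completion proj}) and $X^{\an}$ are obtained by gluing over Zariski open affines, compatibly with restriction.

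So let $W=\operatorname{Spec} A$ be an affine open of $X$ over $M$, and take $p\in\widehat W(M)$. Since $p$ is orthogonal to the value group, for each $f\in A$ the pushforward $v(f(p))$ is a point of $\Gamma_\infty(M)=\R_\infty$. I then set
\[
|f|_p:=\exp\bigl(-v(f(p))\bigr).
\]
The first step is to check that $|\cdot|_p$ is a multiplicative seminorm on $A$ extending the absolute value of $M$. Both identities follow by evaluating at a realization $a\models p$:
\[
v(fg(a))=v(f(a))+v(g(a)),\qquad v((f+g)(a))\geq\min\bigl(v(f(a)),v(g(a))\bigr).
\]
In each case the values $v(f(a))$ and $v(g(a))$ are the realized points $v(f(p))$ and $v(g(p))$, because $p$ is orthogonal to $\Gamma$. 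This gives a map $\widehat W(M)\to W^{\an}$. The map is continuous essentially by definition: the Hrushovski--Loeser topology and the Berkovich topology are both the coarsest topologies making $p\mapsto v(f(p))$, respectively $x\mapsto|f|_x$, continuous for all $f\in A$.

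Next comes injectivity. By quantifier elimination (the proof of \cref{prop:uniquenessbypushforwards}), the type $p\rest_M$ is determined by the values $v(f(p))$ for $f$ polynomial. A generically stable type is the unique $M$-definable, i.e.\ $M$-invariant, extension of its restriction to $M$, so $p$ itself is determined by these values.

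Surjectivity is the main obstacle. Take $x\in W^{\an}$ with kernel $\mathfrak p$ and completed residue field $\mathcal H(x)$. I would realize $\mathcal H(x)$, or at least the valued field $\operatorname{Frac}(A/\mathfrak p)$ with the valuation induced by $x$, inside the monster model, obtaining a tuple $a$ that generates a valued field extension of $M$ with value group $\R$. Since the value group of $M(a)$ is $\R=\Gamma(M)$, we get $\Gamma(M(a))=\Gamma(M)$, so $\tp(a/M)$ contributes no new value-group elements. Over the spherically complete base $M$, I would then invoke the characterization of stably dominated types in \cite{HHM} (Theorem 12.15, together with \cref{fact:implication-from-residue-value-restrictions}) to produce a unique $M$-definable generically stable extension $p$ of $\tp(a/M)$. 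Its seminorm is $x$ by construction. The delicate points here are the passage from $M(a)$ to its algebraic closure and the handling of non-finitely generated residue extensions, and I would follow \cite[Section 14.1]{tamegeo} closely for these.

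Finally, the inverse map must be continuous. I would get this locally from compactness: after a bounded cover, $\widehat W(M)$ restricted to bounded pieces is compact, being a definably compact pro-definable set, while $W^{\an}$ is Hausdorff, so a continuous bijection between them is a homeomorphism. The local homeomorphisms then glue over the affine cover. This recovers the statement as in \cite[Section 14.1]{tamegeo}.
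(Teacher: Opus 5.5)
The paper gives no argument for this Fact beyond citing \cite[Lemma~14.1.1]{tamegeo}, and your outline is essentially the argument behind that lemma. Three of your four steps are sound:
\begin{itemize}
\item the map $p\mapsto|\cdot|_p$ with $|f|_p=\exp(-v(f(p)))$;
\item injectivity, from quantifier elimination together with the fact that a generically stable type is the unique $M$-invariant extension of its restriction to $M$;
\item surjectivity, by embedding the residue field of a Berkovich point into $\U$ over $M$, so that $\Gamma(Ma)=\Gamma(M)=\R$, and then using stable domination over the maximally complete base.
\end{itemize}
For surjectivity the "delicate points" you anticipate do not arise. By \cref{fact:implication-from-residue-value-restrictions}, since $\Gamma(Ma)=\Gamma(M)$, the value-group part $p^{\Gamma}$ of any global $M$-invariant extension $p$ of $\tp(a/M)$ is realized in $M$. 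Hence $\tp(a/M)\cup p^{st}\rest_{Mb}\vdash p\rest_{Mb}$ for every $b$, where $p^{st}$ is forced to be the unique nonforking extension of $\tp(\mathbf f(a)/M)$ in $\St_M$. This is exactly stable domination of $\tp(a/M)$, and it gives the required point of $\widehat W(M)$.

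The step that does not work as written is the last one, continuity of the inverse. Definable compactness of $\widehat V$ in the Hrushovski--Loeser sense concerns limits of definable types. It does not by itself make $\widehat V(M)$ a compact topological space, and proving that would essentially require the identification with a closed space of seminorms that you are in the middle of establishing. In addition, an affine $W$ is not a finite union of bounded pieces, so before gluing you would need infinitely many closed bounded pieces whose interiors cover $W$.

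None of this is needed. You already observed that both topologies are initial topologies for matching families of functions. Your bijection $\Phi$ satisfies $-\log|f|_{\Phi(p)}=v(f(p))$ for every Zariski open $W'\subseteq W$ and every $f\in\mathcal O_X(W')$, and $\Phi(\widehat{W'}(M))=(W')^{\an}$. Because $\Phi$ is bijective, it maps each subbasic open set $\{p\in\widehat{W'}(M) : v(f(p))\in I\}$ onto the subbasic open set $\{x\in(W')^{\an} : -\log|f|_x\in I\}$. So $\Phi$ is open as well as continuous, and it is a homeomorphism directly, with no compactness argument.
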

\begin{proof}
This is \cite[Lemma~14.1.1]{tamegeo}.
\end{proof}
It is of course enough to prove the theorem for the projective space $\mathbb P^n$ in which $X$ embeds into. A key component in this direction is Payne's tropicalization theorem \cite[Theorem 4.2]{Payne09}, which states: 
\begin{fact}\label{fact: Payne}
    Let $X$ be a quasi-projective variety over a field $K$. Then the Berkovich analytification $X^{\mathrm{an}}$ of $X$ is 
    homeomorphic to the inverse limit $\varprojlim \mathbf{Trop}(X,\iota )$, where the limit is taken over all quasiprojective toric embeddings of $X$. (\emph{c.f.} \cite[Definition $4.1$]{Payne09}).  The homeomorphism is given 
     via the inverse map $\varprojlim \pi_\iota$ where 
     $\varprojlim \pi_\iota$ is a natural proper and continuous map from $X^{\mathrm{an}}$ to $\mathbf{Trop}(X,\iota)$.
\end{fact}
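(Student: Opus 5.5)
We plan to follow the strategy of \cite[Theorem 4.2]{Payne09}. The first step is to set up the inverse system. A quasiprojective toric embedding $\iota\colon X\hookrightarrow Y_\Delta$ induces a tropicalization map $\pi_\iota\colon X^{\an}\to \mathbf{Trop}(Y_\Delta)$ into the extended tropical toric variety. On each torus-invariant affine chart $U_\sigma$, this map sends a seminorm $x$ to $\bigl(-\log|\chi^u|_x\bigr)_{u}$, and we define $\mathbf{Trop}(X,\iota):=\pi_\iota(X^{\an})$. A torus-equivariant morphism $Y_\Delta\to Y_{\Delta'}$ compatible with the embeddings induces a continuous map $\mathbf{Trop}(X,\iota)\to\mathbf{Trop}(X,\iota')$ commuting with the $\pi$'s. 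The system is directed, since two embeddings $\iota,\iota'$ are dominated by the diagonal embedding into the product toric variety (after a projective closure adjustment to keep it quasiprojective). The maps $\pi_\iota$ are continuous by definition of the Berkovich topology, because each coordinate is $x\mapsto |f|_x$ for a regular function $f$. Hence $\pi:=\varprojlim\pi_\iota\colon X^{\an}\to\varprojlim\mathbf{Trop}(X,\iota)$ is well defined and continuous.

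The second step is injectivity, and more precisely that the $\pi_\iota$ separate the defining functions. A point of $X^{\an}$ is determined by an affine open $U$ containing its center, together with the values $|f|_x$ for $f\in\mathcal O_X(U)$. Given finitely many regular functions $f_1,\dots,f_k$ on an affine open $U\subseteq X$, we choose an embedding of $U$ (and then, by taking the closure in a projective space, of $X$) into a toric variety whose affine chart is $\mathbb A^N$ and in which $f_1,\dots,f_k$ appear among the coordinates. Then $|f_i|_x$ is read off from $\pi_\iota(x)$. Consequently, two points with the same image in the limit agree on every regular function on every affine chart, and so they coincide.

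The third step is surjectivity. Each $\pi_\iota$ is proper (Payne proves this for quasiprojective toric embeddings) and, by definition, surjective onto $\mathbf{Trop}(X,\iota)$. Given a compatible family $(t_\iota)$, the fibers $\pi_\iota^{-1}(t_\iota)$ are nonempty and compact, and they form a directed family under refinement by compatibility. By the finite intersection property their intersection is nonempty, and any point in it maps to $(t_\iota)$. We expect this to be the main obstacle. Properness has to be genuinely established: in the non-compact quasiprojective case one must check that preimages of compact sets in the extended tropical space are compact in $X^{\an}$. We would first try to reduce to the projective closure $\overline X$, where $\overline X^{\an}$ is compact Hausdorff, and then use that $\mathbf{Trop}(X,\iota)$ is the preimage of an open piece.

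Finally, for the homeomorphism we show that $\pi$ is open onto its image, or equivalently that the inverse is continuous. A subbasic open set $\{x : |f|_x\in I\}$ of $X^{\an}$, with $f$ regular on an affine open $U$, equals $\pi_\iota^{-1}$ of an open subset of $\mathbf{Trop}(X,\iota)$ for an embedding $\iota$ chosen as in the second step, where $f$ is a coordinate. Its image under the bijection $\pi$ is therefore the preimage under the projection $\varprojlim\to\mathbf{Trop}(X,\iota)$ of an open set, which is open in the limit topology. Hence $\pi$ is a continuous open bijection, and its inverse is the homeomorphism $\varprojlim\pi_\iota$ of the statement.
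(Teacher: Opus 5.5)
The paper gives no proof of this fact; it is quoted from \cite[Theorem~4.2]{Payne09}. Your architecture is the right one: continuity of $\pi$, injectivity by separating seminorms with tropical coordinates, surjectivity via compact fibres and the finite intersection property, and openness on a subbase. However, your second and fourth steps both rest on an embedding lemma that you do not prove, and the construction you indicate for it does not work.

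A closed embedding of an affine open $U$ into $\mathbb A^N$ with $f_1,\dots,f_k$ among the coordinates gives no embedding of $X$. Its closure in $\mathbb P^N$ is a compactification of $U$, not of $X$, and the $f_i$ are only rational functions on $X$. For instance, for $U=\mathbb A^2\subset X=\mathbb P^2$ with coordinates $x,y,xy$, the closure is a quadric $\cong\mathbb P^1\times\mathbb P^1$, and the induced rational map on $\mathbb P^2$ has base points.

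What you need is a quasiprojective toric embedding $\iota$ of all of $X$, together with a torus-invariant affine chart $U_\sigma$, such that $\iota^{-1}(U_\sigma)=U$ and $f=\iota^*\chi^u$ for a character $\chi^u$. The equality $\iota^{-1}(U_\sigma)=U$ matters. It gives $U^{\an}=\pi_\iota^{-1}(\mathbf{Trop}(U_\sigma))$, which is what makes your subbasic sets preimages of open sets in the fourth step. It is also what guarantees, in the second step, that $\pi(x)=\pi(x')$ puts both centres in the same $U$, a point you leave unaddressed. Finally, it cannot be arranged for an arbitrary affine open. The complement of $\iota^{-1}(U_\sigma)$ is always the support of an effective Cartier divisor, and this fails, e.g., for the complement of a suitable ruling in the affine cone over a plane cubic. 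So quasiprojectivity has to be used precisely here.

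Here is a repair. Fix a projective closure $\overline X\subseteq\mathbb P^m$, put $\partial=\overline X\setminus X$, and use only the affine opens $U=\overline X\setminus V(G)$ with $G$ a form vanishing on $\partial$. These form a basis of the Zariski topology of $X$, and by graded prime avoidance any finitely many points of $X$ lie in one of them. Consequently the sets $\{x\in U^{\an} : |f|_x\in I\}$ with such $U$ already generate the topology of $X^{\an}$.

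Given such a $U$ and $f\in\mathcal O(U)$, write $f=F/G^j$ with $j$ large enough that, for $e=j\deg G$, the set $\partial$ is cut out in $\overline X$ by forms $H_1,\dots,H_s$ of degree $e$, one of them being $G^j$. Consider the morphism $\mathbb P^m\to\mathbb P^N$ given by all monomials of degree $e$ together with $F,G^j,H_1,\dots,H_s$ as extra coordinates. It is a closed embedding, and it embeds $X$ as a closed subvariety of the quasiprojective toric variety $Y=\mathbb P^N\setminus V(y_{H_1},\dots,y_{H_s})$. The chart $U_\sigma=\{y_{G^j}\neq 0\}$ satisfies $\iota^{-1}(U_\sigma)=U$, and $f$ is the pullback of the character $y_F/y_{G^j}$. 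With this lemma your second and fourth steps go through.

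By contrast, properness, which you single out as the main obstacle, is routine. On a chart, a compact subset of $\mathbf{Trop}(U_\sigma)$ has its monomial coordinates bounded below. Its preimage is therefore a closed subset of a compact domain $\{|\chi^{u_i}|\le c_i\}$ of $U_\sigma^{\an}$, and $X^{\an}\to Y^{\an}$ is a closed embedding. Also, the product of two quasiprojective toric embeddings is already a quasiprojective toric embedding, so directedness needs no adjustment.
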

Each tropical image $\mathbf{Trop}(X,\iota)$ is defined after fixing a toric variety $Y=Y(\Delta)$ where $\Delta$ is a fan and $\iota$ is a closed embedding from $X$ to $Y(\Delta)$. For details on the construction we refer the reader to \cite[Section $3$]{Payne09}.\\
There are two reasons why we will not dwell on the the precise definitions of $\mathbf{Trop}(X,\iota)$ and the maps between them under which Payne's homeomorphism is valid. 
The first one, is that we will only use Fact \ref{fact: Payne}
for $X$ being the projective space $\mathbb P^n$, and we can therefore define $\varprojlim \mathbf{Trop}(\mathbb{P}^n,\iota )$ through its affine coverings.\\
Recall that given an algebraically closed field $M$ we define $\mathbb P^n(M)$
by taking the quotient space $M^{n+1}\setminus\{(0,\ldots,0)\}$, by the  equivalence relation $(a_0,\ldots,a_n)\sim (b_0,\ldots,b_n)$
if there exists \(k\in M^\times\)  such that $b_i=ka_i$ for every \(0\leq i\leq n\).  Then  $\mathbb P^n(M)$ is covered by the open spaces $U_i(M):=\{[(a_0,\ldots,a_n)]_\sim\mid a_i\neq 0\}$ and that via the standard chart $U_i(M)$ is isomorphic to $\mathbb A^n(M)$, more explicitly   $[(a_0,\ldots,a_n)]_\sim\mapsto (\frac{a_0}{a_i},\ldots,\frac{a_{i-1}}{a_i}, \frac{a_{i+1}}{a_i},\dots, \frac{a_n}{a_i})$.\\
Although the space $\mathbf{Trop}(\mathbb P^n,\iota)$ is not hard to define through homogeneous polynomials and it is beautifully done in \cite{SturmfelsMalagan}, for the purposes of this paper, it is better to think of $\mathbf{Trop}(\mathbb P^n,\iota)$ and $\varprojlim \mathbf{Trop}(\mathbb P^n,\iota)$ through the tropicalization of the affine spaces $U_i$. Now we state the second reason for which we will not need to deal with the inverse limit $\varprojlim \mathbf{Trop}(X,\iota )$ in its more general form: as explained in Chapter 4 of \cite{Payne09}, the analytifications $U_i^{an}$ cover $(\mathbb P^n)^{an}$ and if for any $\iota$ we take the restriction of $\pi_\iota$ to $(U_i)^{an}$ then the maps $\pi_\iota|_{U_i^{an}}$ to $\mathbf{Trop}(U_i,\pi_\iota|_{U_i^{an}})$ glue together to define the original map $\pi_{\iota}: \mathbb P^n \to \mathbf{Trop}(\mathbb P^n,\iota)$. Thus we can concentrate in the tropicalizations of the affine space $\mathbb A^{an}$. 
\begin{definition}\label{def:affine-tropicalization}
The \emph{standard affine tropicalization map} is the coordinatewise
valuation map $\mathrm{trop}_{\mathrm{aff}}\colon
\mathbb A^n(M)\longrightarrow\mathbb R_\infty^n$ defined by 
$(x_1,\ldots,x_n)
\to
(v(x_1),\ldots,v(x_n))$. 

More generally, let \(M[X]\) be  the coordinate ring of regular functions on $\mathbb A^n$, where $X=(x_{1},\dots, x_{n})$. Let
$\mathbf f=(f_1,\ldots,f_r)$ be a finite tuple of regular functions
on $\mathbb A^n$. Let $Y_{\mathbf{f}}$ be the Zariski open set in which all the functions $f_{i}$ are defined. The tuple $\mathbf f$ determines a morphism
$\iota_{\mathbf f}\colon Y_{\mathbf{f}}\to\mathbb A^r$, given by
$x\mapsto(f_1(x),\ldots,f_r(x))$, and hence an associated
\emph{tropicalization map} $ \mathrm{trop}_{\mathbf f}
\colon  Y_{\mathbf{f}}(M)\to \mathbb R_\infty^r$, given by taking the composition $\mathrm{trop}_{\mathrm{aff}}\circ\iota_{\mathbf f}$.\\
More explicitly,
$\mathrm{trop}_{\mathbf f}(x)
=(v(f_1(x)),\ldots,v(f_r(x)))$.
We define the
\emph{tropicalization of $\mathbb A^n$ with respect to $\iota_{\mathbf f}$} by
$\operatorname{Trop}(\mathbb A^n,\iota_{\mathbf f})
:=
\overline{
\{\mathrm{trop}_{\mathbf f}(x)\mid x\in Y_{\mathbf{f}}(M)\}
},$
where the closure is taken in $\mathbb R_\infty^r$, equipped with the product topology. 
\end{definition}

These affine tropicalization maps extend naturally to the Berkovich analytification
\((\mathbb{A}^n)^{\mathrm{an}}\). Recall that a point \(p \in \mathbb (\mathbb{A}^n)^{\mathrm{an}}\)
is represented by a multiplicative semi-norm \(|\cdot|_p\) on \(M[X]\) 
extending the absolute value \(a\mapsto\exp(-v(a))\) on \(M\).
Thus, for any tuple of regular functions \(\mathbf f=(f_1,\ldots,f_r)\), we obtain a continuous map
\[
\pi_{\mathbf f}\colon
(\mathbb A^n)^{\mathrm{an}}\longrightarrow
\operatorname{Trop}(\mathbb A^n,\iota_{\mathbf f}),
\qquad
p\longmapsto
(-\log|f_1|_p,\ldots,-\log|f_r|_p),
\]
where \(-\log 0=\infty\). On the simple points \(\mathbb A^n(M)\), this agrees with
\(\mathrm{trop}_{\mathbf f}\). Let \[\mathcal I_{\mathrm{aff}}(\mathbb A^n)
:=\left\{
(f_1,\ldots,f_r)
\ \middle|\
r\geq1,\;
f_i\in M[X],\;
M[X]=M[f_1,\ldots,f_r]
\right\}.\]
One simplifying aspect about working with $\mathbb A^n$ instead of the projective space $\mathbb{P}^{n}$, is that we can define an order on $\mathcal I_{\mathrm{aff}}(\mathbb A^n)$ given by
\(\mathbf f\preceq\mathbf g\) if \(\mathbf f\) can be obtained from
\(\mathbf g\) by deleting entries.
Then $(\mathcal I_{\mathrm{aff}}(\mathbb A^n), \preceq)$ is directed: for any two generating tuples
\(\mathbf f\) and \(\mathbf g\), their concatenation
\((\mathbf f,\mathbf g)\) is again a generating tuple and is an
upper bound for both. The corresponding tropicalizations therefore
form an inverse system, with transition maps given by coordinate
projections.\\
Note that for any tuple $\mathbf{f}$ of regular functions generating $M[X]$, we have that $\mathrm{Trop}(\mathbb{A}^{n}, \iota_{\mathbf{f}})$ is a definable subset of $\mathbb{R}_{\infty}^{r}$, which inherits the natural topology as a subspace. In particular, by the description of the topology on $\mathbf{Trop}(\mathbb{P}^{n},\iota)$ in \cite[Section $3$]{Payne09} we might view  the set $\mathbf{Trop}(\mathbb{P}^{n},\iota)$ as a definable space in the sense of van den Dries \cite[Chapter $10$]{vdD}. 
\begin{proposition}\label{prop:babytropicalization}
For any tropicalization $\iota$ the set $\mathbf{Trop}(\mathbb P^n,\iota)$ is definably homeomorphic to a compact  subset of $\Ri^k$ for some $k$. In particular, it is contained in a bounded (closed) box. \end{proposition}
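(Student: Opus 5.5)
The plan is to build the compact model of $\mathbf{Trop}(\mathbb P^n,\iota)$ by gluing its affine pieces. In the way the paper describes it, $\mathbf{Trop}(\mathbb P^n,\iota)$ is covered by the affine tropicalizations $\operatorname{Trop}(U_i,\iota|_{U_i})$ for $i=0,\dots,n$. Each of these is a definable (closed) subset of some $\Ri^{r_i}$ by \cref{def:affine-tropicalization}. The gluing maps on overlaps are given, in tropical coordinates, by differences of valuations: passing from the chart $U_i$ to $U_j$ replaces $x_k/x_i$ by $x_k/x_j$, i.e.\ subtracts $v(x_j/x_i)$. These maps are therefore piecewise $\mathbb Z$-linear and definable in $(\Ri,+,\leq,0)$. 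So $\mathbf{Trop}(\mathbb P^n,\iota)$ is a definable space in the sense of \cite[Chapter 10]{vdD}, with finitely many definable charts. Compactness is already known: it is the image of the compact space $(\mathbb P^n)^{\an}$ under the continuous map $\pi_\iota$ of \cref{fact: Payne}, and it is Hausdorff because it is a tropical toric variety, cf.\ \cite[Section 3]{Payne09}.

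Next I will replace the charts by a single bounded embedding. On the chart $U_i$ I restrict to the bounded piece where $v(x_k/x_i)\geq 0$ for all $k$, that is, where $x_i$ has minimal valuation. These $n+1$ pieces are closed and bounded, and they still cover $\mathbb P^n$. Their tropical images lie in $[0,\infty]^{r}$ for coordinate monomials. For general regular functions $f_l$ in the embedding, I would use \cref{fact:imageofboundedisbounded}: on a bounded set the image of $v\circ f$ is bounded below, say by $\gamma_l$, so it lies in $[\gamma_l,\infty]$. Hence each piece sits in a closed box of $\Ri^{r_i}$.

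I then define a map $\mathbf{Trop}(\mathbb P^n,\iota)\to\prod_i \Ri^{r_i}\times\Ri^{n+1}$. Its $i$-th block records the chart coordinates when the point lies in the $i$-th piece, and $\infty$ otherwise; the last block records the normalized values $v(x_k)-\min_j v(x_j)\in[0,\infty]$. A point of $\mathbb P^n(\Ri)$ tropically is determined by these normalized coordinates. The map is definable, continuous and injective. Since the source is compact and the target is Hausdorff, it is a homeomorphism onto its image, and the image is compact and lies in a closed box.

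The step I expect to be the main obstacle is continuity of the block map at the boundary between pieces, where the chart blocks jump to $\infty$. To avoid this I would instead use the single normalized-coordinate map together with the values $v(f_l/x_i^{\deg})-\min$ suitably normalized. These are expressed as $\min/\max$ of finitely many continuous $\Ri$-valued functions, so continuity becomes automatic. The price is that injectivity must then be checked chart by chart using the compatibility of $\pi_\iota|_{U_i^{\an}}$ described after \cref{fact: Payne}.
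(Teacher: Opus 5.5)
\textbf{There is a genuine gap: the embedding into $\Ri^k$ is never actually established.}

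Your first paragraph agrees with the paper. The affine charts $T_i$ make $\mathbf{Trop}(\mathbb P^n,\iota)$ a definable space, and compactness and Hausdorffness come from $\pi_\iota$ and the compactness of $(\mathbb P^n)^{\an}$. The difference, and the gap, is in the embedding step. The paper gets the embedding in one stroke by citing van den Dries' theorem that a regular definable space is affine \cite[Chapter~10, Theorem~1.8]{vdD}. You try to write the embedding down instead, and neither candidate map works as stated.

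The block map is discontinuous, as you note yourself, so your assertion that it is \emph{definable, continuous and injective} cannot be used. The replacement relies on the normalized values $v(x_k)-\min_j v(x_j)$ and $v(F_l)-\deg(F_l)\min_j v(x_j)$. These are continuous on $(\mathbb P^n)^{\an}$, but to define a map on $\mathbf{Trop}(\mathbb P^n,\iota)$ they must factor through $\pi_\iota$, and for a general toric embedding they do not. Take the closed embedding $\iota\colon\mathbb P^1\to\mathbb P^1$, $[x_0:x_1]\mapsto[x_0:x_0+x_1]$. The points $[1:0]$ and $[1:t]$, with $t$ a unit and $\bar t\neq -1$, have the same tropical image. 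Yet $v(x_1)-v(x_0)$ equals $\infty$ at the first and $0$ at the second.

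Your remark that a point is \emph{tropically determined by its normalized coordinates} is true for the standard tropicalization $\mathbb{TP}^n$, not for an arbitrary $\iota$. For the same reason, the pieces \emph{where $x_i$ has minimal valuation} are not cut out by tropical coordinates. They have to be taken as the images under $\pi_\iota$ of the corresponding compact subsets of $(\mathbb P^n)^{\an}$. Injectivity of the replacement map is also left unchecked.

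What is missing is a way to glue the finitely many bounded chart embeddings $\phi_i\colon T_i\to\Ri^{r_i}$ into one continuous, injective, definable map on the whole space. Concretely, you need definable Urysohn-type functions separating each compact piece from the complement of its chart; this is exactly what the affineness theorem cited by the paper supplies. If you want to keep an explicit route, a map of the form $p\mapsto(\rho_i(p),\phi_i(p)+\rho_i(p))_i$ would work, with each $\rho_i$ satisfying:
\begin{itemize}
\item $\rho_i$ is continuous and definable with values in $[0,\infty]$;
\item $\rho_i=0$ on the $i$-th piece;
\item $\rho_i=\infty$ off a compact neighbourhood of that piece inside $T_i$.
\end{itemize}
Continuity and injectivity then follow from injectivity of each $\phi_i$, and compactness of the image gives the box. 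Producing such $\rho_i$ definably in $(\Ri,+,\leq,0)$ is the real work. Done carefully, this would add something beyond the citation, since \cite[Chapter~10]{vdD} works over o-minimal expansions of ordered fields. As written, though, your proposal does not establish the embedding, which is the only non-formal part of the statement.
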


\begin{proof}
    By \cite[Theorem 3.4.8]{Berkovich} $(\mathbb P^n)^{\mathrm{an}}$ is compact. And we have a continuous map $\pi_{\iota}: (\mathbb P^n)^{\mathrm{an}} \to \mathbf{Trop}(\mathbb{P}^{n},\iota)$, 
where the image of the simple points in $\mathbb P^n$ is proper and dense in $\mathbf{Trop}(\mathbb{P}^n,\iota)$.  Being a continuous image of a compact space $\mathbf{Trop}(\mathbb{P}^n,\iota)$ is compact. In particular, it is regular as it is compact and Hausdorff space.\\
Recall that the tropical image $\mathrm{Trop}(\mathbb{P}^{n},\iota)$ is defined after fixing a toric variety $Y$ and $\iota: \mathbb{P}^{n} \to Y$ is a closed embedding. By \cite[Example $3.2$]{Payne09}, $\mathrm{Trop}(\mathbb{P}^{n},\iota)$ is the gluing of the $\mathrm{Trop}(U_i,\iota\rest_{U_i})$ for the standard affine covers $U_i$ of $\mathbb P^n$. Let $T_i=\mathrm{Trop}(U_{i},\iota\rest_{U_i})\subseteq\mathbb R_\infty^{r_i}.$
The topology on $\mathbf{Trop}(\mathbb{P}^{n},\iota)$, is defined by gluing the tropicalizations $T_{i}$ (see the discussion before \cite[Example $3.2$]{Payne09}). In particular, the $(T_i)_{i \leq s}$ form a finite open cover of $\mathbf{Trop}(\mathbb{P}^{n}, \iota)$.\\
On the other hand, the spaces $T_{i}$ are tropicalization of the affine space, so each of them is generated by regular functions as in \cref{def:affine-tropicalization}. In particular, each of them is definable. Hence each $T_{i}$ is a definable subset of $\mathbb R_\infty^{r_{i}}$,  which inherits a natural subspace topology. Furthermore, their overlaps are definable, and the transition maps come from monomial localizations, so they are also definable homeomorphisms.\\
Hence, $\mathbf{Trop}(\mathbb P^n, \iota)$ is a definable space in the sense of van den Dries \cite[Chapter $10$]{vdD}, with a definable regular topology. By \cite[Theorem 1.8 in Chapter 10]{vdD} we may assume $\mathbf{Trop}(\mathbb{P}^{n},\iota)$ to be a definable subset of $\Ri^k$ with the order topology for some $k$. Since it is compact, it must be contained in a bounded (closed) box.
\end{proof}

We can now prove the main result of this subsection.

\begin{proposition}\label{prop:projectivecase} Let $M$ be a spherically complete model of $\mathrm{ACVF}$ whose value group is $(\mathbb{R},+, \leq, 0)$. Let $\mu$ be a global $M$-invariant generically stable measure centered in $\mathbb{P}^{n}$. Then there is a unique Radon measure $\hat{\nu}$ on $(\mathbb{P}^{n})^{an}\cong \widehat{\mathbb{P}^{n}}$ which induces $\mu$ (in the sense of \cref{def:keislermeasure}).
\end{proposition}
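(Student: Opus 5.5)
The strategy is to build $\hat\nu$ through Payne's theorem (\cref{fact: Payne}), as an inverse limit of Borel measures on the tropicalizations, and to obtain uniqueness from \cref{prop:uniquenessbypushforwards} together with \cref{fact:uniquenessmeasures}.

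Fix a tropicalization $\iota$. By \cref{prop:babytropicalization}, $\mathbf{Trop}(\mathbb P^n,\iota)$ is definably homeomorphic to a compact definable subset $T_\iota$ of a closed box in $\Ri^k$. Composing the tropicalization map (chartwise $\mathrm{trop}_{\mathbf f}$ on the $U_i$) with this homeomorphism gives an $M$-definable map $t_\iota\colon\mathbb P^n\to T_\iota\subseteq\Gamma_\infty^k$. Let $\lambda_\iota:=(t_\iota)_*\mu$; its restriction to $\mathbb B^\Gamma(\Gamma_\infty(M))$ is a Keisler measure over $\R_\infty$ concentrating on a box.

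The first claim is that $\lambda_\iota$ is smooth. By \cref{thm:main-generically-stable-measures-acvf}, $\mu$ is orthogonal to $\Gamma$. By stable embeddedness, every formula $\theta(u;e)$ with $e\in\Gamma(\U)$ pulls back to $\theta(t_\iota(x);e)$. That formula is approximated by $M$-formulas, and by \cref{lem:finite-gamma-normal-form} and stable embeddedness these can be pushed back to $\Gamma(M)$-definable sets. Hence $\lambda_\iota$ has a unique extension over every elementary extension of $\R_\infty$.

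\cref{theo: radon and smooth in R}.(2) then gives a Borel measure $\nu_\iota$ on $T_\iota(\R_\infty)\cong\mathbf{Trop}(\mathbb P^n,\iota)$ inducing $\lambda_\iota$, and it is unique by \cref{rmk:uniquenessRadon}. The transition maps between tropicalizations are coordinate projections, or definable monomial maps chartwise, so they are definable. Uniqueness then forces compatibility: the pushforward of $\nu_{\iota'}$ to $\mathbf{Trop}(\mathbb P^n,\iota)$ induces the same Keisler measure $\lambda_\iota$, so it equals $\nu_\iota$.

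This gives a projective system of Radon probability measures on compact Hausdorff spaces. By Prokhorov's theorem (Bourbaki), it lifts to a unique Radon measure $\hat\nu$ on $\varprojlim\mathbf{Trop}(\mathbb P^n,\iota)\cong(\mathbb P^n)^{\an}\cong\widehat{\mathbb P^n}(M)$, using \cref{fact: Payne} and \cref{fact:stable-completion-berkovich}.

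Next we check that $\hat\nu$ induces $\mu$ in the sense of \cref{def:keislermeasure}. By \cref{prop:uniquenessbypushforwards}, restricted chartwise to regular functions, it suffices to compare pushforwards along $v\circ\mathbf f$ for generating tuples $\mathbf f$. Every tuple of regular functions can be enlarged to a generating one, and its pushforward factors through some $t_\iota$. For $q\in\widehat{\mathbb P^n}(M)$ and an $\Gamma(M)$-definable $E$, the condition $t_\iota(x)\in E$ lies in $q$ iff $\pi_\iota(q)\in E(\R_\infty)$, because $q$ is orthogonal to $\Gamma$ and so its tropical image is realized. Hence $\mu_{\hat\nu,M}(t_\iota^{-1}(E))=\hat\nu(\pi_\iota^{-1}(E))=\nu_\iota(E)=\lambda_\iota(E)=\mu(t_\iota^{-1}(E))$.

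For uniqueness, suppose a Radon $\hat\nu'$ also induces $\mu$. Then its projections to each $\mathbf{Trop}(\mathbb P^n,\iota)$ induce $\lambda_\iota$, so by \cref{rmk:uniquenessRadon} they equal $\nu_\iota$. Radon measures on an inverse limit are determined by their projections (again by Prokhorov's theorem, or by \cref{fact:uniquenessmeasures} applied to cylinder sets), so $\hat\nu'=\hat\nu$.

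The main obstacle is the smoothness step: making precise that orthogonality of $\mu$ to $\Gamma$ descends to smoothness of the pushforward in the reduct $(\Gamma_\infty,+,\leq,0)$, uniformly over elementary extensions. A secondary point needing care is that the definable homeomorphism of \cref{prop:babytropicalization} and the gluing over charts are compatible with the maps $t_\iota$ and with the projective system.
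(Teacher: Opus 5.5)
Your proposal is correct and has the same architecture as the paper's proof. You push $\mu$ forward to each Payne tropicalization and use \cref{theo: radon and smooth in R}.(2) with \cref{rmk:uniquenessRadon} to get unique, hence compatible, Borel measures $\nu_\iota$. You then lift by Prokhorov and conclude with \cref{prop:uniquenessbypushforwards}.

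The one genuinely different ingredient is the smoothness of $\lambda_\iota$. The paper notes that this pushforward is generically stable over $\Ri$ and cites the fact that generically stable measures in a distal theory are smooth \cite[Proposition $2.27$]{distal}. You go through orthogonality to $\Gamma$ instead (\cref{prop:gs-implies-gamma-orthogonal}). This works, but the mechanism is not \cref{lem:finite-gamma-normal-form}, which points the other way; the approximations are pushed down by taking images. Suppose $\rho_-(x)\rightarrow\theta(t_\iota(x);e)\rightarrow\rho_+(x)$ with $\rho_\pm$ over $M$ and $\mu(\rho_+)-\mu(\rho_-)<\epsilon$. Put $F:=t_\iota(\rho_-)$ and $G:=\Gamma_\infty^k\setminus t_\iota(\neg\rho_+)$.
\begin{itemize}
\item These are $M$-definable subsets of $\Gamma_\infty^k$, hence $\Gamma(M)$-definable in the pure structure by stable embeddedness and elimination of imaginaries.
\item One checks directly that $F\subseteq\theta(u;e)\subseteq G$.
\item Since $\rho_-\subseteq t_\iota^{-1}(F)$ and $\neg\rho_+\subseteq t_\iota^{-1}(t_\iota(\neg\rho_+))$, one gets $\lambda_\iota(G)-\lambda_\iota(F)\leq\mu(\rho_+)-\mu(\rho_-)<\epsilon$.
\end{itemize}
Now \cref{lem:unique-extension-criterion} gives smoothness. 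The paper's route is a one-line citation. Yours stays inside the paper, with distality entering only through \cref{prop:gs-implies-gamma-orthogonal}. It also shows that orthogonality to $\Gamma$ forces smoothness of every $\Gamma$-valued pushforward, which is the converse of the Claim in \cref{thm:radon-barycentre-criterion}.

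You also make explicit two points the paper leaves implicit. The first is the identity $[t_\iota^{-1}(E)]_{\widehat{\mathbb P^n}(M)}=\pi_\iota^{-1}(E(\Ri))$, which rests on the tropical image of an orthogonal type being realized. The second is the uniqueness of $\hat\nu$. For uniqueness, rely on the uniqueness half of Prokhorov's theorem, or on Riesz representation together with Stone--Weierstrass, rather than \cref{fact:uniquenessmeasures} applied to cylinder sets alone. Here $(\mathbb P^n)^{\an}$ is not second countable, since $\mathbb A^1(M)$ with its non-separable metric topology sits inside it. So cylinders need not generate the Borel $\sigma$-algebra, and Radon regularity is what finishes the argument.
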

\begin{proof} 
Let $\mu$ be a generically stable measure concentrating on $\mathbb P^n$. Let $\mathbf{Trop}(\mathbb P^n, \iota)$ be any of the tropicalizations of $\mathbb P^n$ in Payne's inverse limit. By \cref{prop:babytropicalization}, $\mathbf{Trop}(\mathbb P^n, \iota)$ is definably homeomorphic to a compact subset $V_{\iota}$ of $\Ri^k$ for some $k$, which is therefore contained in a box. The pushforward $\mu_\iota$ of $\mu$ on the $\mathbb{R}_{\infty}$-definable set $V_{\iota}$ is generically stable over $\mathbb{R}_{\infty}$, as it is an $M$-definable pushforward of a generically stable measure. By \cref{fact:acvf-stably-embedded-sorts} the value group is distal and stably embedded, so by \cite[Proposition $2.27$]{distal} $\mu_\iota$ is smooth over $\mathbb{R}_{\infty}$. By \cref{theo: radon and smooth in R}.(2) there is a $\sigma$-additive Borel measure $\Lambda_{\iota}$ on $V(\mathbb{R}_{\infty})$ such that $\Lambda_{\iota}$ induces $\mu_{\iota}$ as a Keisler measure on the $\mathbb{R}_{\infty}$-definable subsets of $V$. Note that the system of measures $\Lambda_\iota$ on $\mathbf{Trop}(\mathbb{P}^{n},\iota)$ is by definition compatible with the inverse limit $\varprojlim \mathbf{Trop}(\mathbb P^n, \iota)$. Indeed, all the measures $\mu_{\iota}$ are obtained as the pushforwards of the same Keisler measure $\mu$ on $M$, and they are compatible with the transition maps, and the induced Borel measure $\Lambda_{\iota}$ are unique by \cref{rmk:uniquenessRadon}.\\
All the spaces $\mathbf{Trop}(\mathbb{P}^{n},\iota)$ are compact and Hausdorff, so Prokhorov's Theorem \cite[Theorem $5.5.1$]{burba9} implies that there is a unique Radon measure $\nu$ on $(\mathbb P^n)^{an}\cong \varprojlim \mathbf{Trop}(\mathbb P^n, \iota)$ such that $\pi_{\iota}(\nu)=\Lambda_{\iota}$. \\

It remains just to argue that $\nu$ induces $\mu$ as a Keisler measure. By \cref{fact:stable-completion-berkovich} we can identify $(\mathbb{P}^{n})^{\mathrm{an}}$ with $\widehat{\mathbb{P}^{n}}$ with the Hrushovski-Loeser topology. Let $\mu_{\nu,M}$ be the Keisler measure induced by $\nu$ as described in \cref{def:basicdef}. By construction, $\mu_{\nu,M}$ will have $\mu_{\iota}$ as pushforward on $\mathbf{Trop}(\mathbb{P}^{n},\iota)$, as $\pi_{\iota}(\nu)=\Lambda_{\iota}$ and $\Lambda_{\iota}$ induces $\mu_{\iota}$. This implies that in any of the affine subspaces covering $\mathbf{Trop}(\mathbb{P}^{n},\iota)$, the pushforwards under regular maps are precisely the pushforwards of $\mu$. Hence $\mu_{\nu,M}$ must coincide with $\mu$ by \cref{prop:uniquenessbypushforwards}.

\end{proof}

\begin{corollary}\label{cor:generically-stable-produces-radon}  Let $M$ be a spherically complete model of $\mathrm{ACVF}$ whose value group is $(\mathbb{R}, +, \leq, 0)$. Let $X$ be a projective variety defined over $M$ and let $\mu$ be a global $M$-invariant generically stable measure centered on $X$. Then there is a unique Radon measure $\hat{\nu}$ on the Berkovich analyfication $X^{\mathrm{an}}$ which induces $\mu$ in the sense of \cref{def:keislermeasure}.
\end{corollary}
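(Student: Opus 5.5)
The plan is to reduce to \cref{prop:projectivecase} by embedding $X$ into projective space. Fix a closed embedding $j\colon X\hookrightarrow\mathbb P^n$ defined over $M$. Since $\mu$ is centered on $X$ and $X$ is an $M$-definable subset of $\mathbb P^n$, the same global measure $\mu$ (read via $j$) is a global $M$-invariant generically stable measure centered on $\mathbb P^n$ with $\mu(X)=1$. By \cref{prop:projectivecase} there is a unique Radon measure $\hat\nu$ on $(\mathbb P^n)^{\an}\cong\widehat{\mathbb P^n}(M)$ inducing $\mu$ in the sense of \cref{def:keislermeasure}.

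The next step is to show that $\hat\nu$ lives on $X^{\an}$. By \cref{fact:stable-completion-berkovich}, functoriality of the stable completion, and the Hrushovski--Loeser description, the closed embedding $j$ induces a closed topological embedding $X^{\an}\cong\widehat X(M)\hookrightarrow\widehat{\mathbb P^n}(M)$ whose image is exactly $[X]_{\widehat{\mathbb P^n}(M)}$. Because $X$ is Zariski closed, it is cut out on each affine chart $U_i$ by finitely many polynomials $P$, so $[X]$ is a finite intersection of sets $v(P)^{-1}(\{\infty\})$. These sets are closed by the argument of \cref{lem:borel-incidence-over-base}, applied chartwise. Then
\[\hat\nu\bigl([X]_{\widehat{\mathbb P^n}(M)}\bigr)=\mu(X)=1.\]
The restriction of $\hat\nu$ to the closed subset $X^{\an}$ is therefore a Radon probability measure, because restrictions of Radon measures to closed sets are Radon. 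For every $M$-definable $D\subseteq X$ we have $[D]_{\widehat X(M)}=[D]_{\widehat{\mathbb P^n}(M)}$ under the identification, so the restriction still induces $\mu$.

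For uniqueness, suppose $\hat\nu'$ is another Radon measure on $X^{\an}$ inducing $\mu$. Pushing it forward along the closed embedding gives a Radon measure on $(\mathbb P^n)^{\an}$. This pushforward induces the same Keisler measure $\mu$, since for $D\subseteq\mathbb P^n$ we have $[D\cap X]_{\widehat X}=[D]_{\widehat{\mathbb P^n}}\cap X^{\an}$. The uniqueness part of \cref{prop:projectivecase} then forces the pushforward to equal $\hat\nu$, and hence $\hat\nu'$ equals the restriction of $\hat\nu$.

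I expect the main obstacle to be the compatibility step: checking that the Hrushovski--Loeser (equivalently Berkovich) topology on $\widehat X(M)$ agrees with the subspace topology from $\widehat{\mathbb P^n}(M)$, and that its image is precisely the closed set of types containing $X$. I would first try to get this directly from the sub-basis in \cref{def:hl-topology-stable-completion proj}, since regular functions on opens of $X$ are locally restrictions of regular functions on opens of $\mathbb P^n$. Failing that, I would cite the standard fact that closed immersions induce closed embeddings of Berkovich analytifications.
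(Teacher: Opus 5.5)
Your proposal is correct and follows the same route as the paper: embed $X$ into $\mathbb P^n$, use that analytification (equivalently, the stable completion) preserves closed immersions, and invoke \cref{prop:projectivecase}. You also supply the steps the paper leaves implicit: that $\hat\nu$ gives full mass to the closed set $[X]_{\widehat{\mathbb P^n}(M)}\cong X^{\an}$, that its restriction still induces $\mu$, and that uniqueness on $X^{\an}$ follows by pushing a competitor forward to $(\mathbb P^n)^{\an}$.
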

\begin{proof}
Choose an $M$-defined projective embedding
$j\colon X\hookrightarrow\mathbb P^n$.
By \cite[Fact~5.1.2.1]{Temkin}, analytification preserves
closed immersions, so we identify $X^{\mathrm{an}}$ with
a closed subspace of $(\mathbb P^n)^{\mathrm{an}}$, and assume that $X$ is a definable (in fact algebraic) subset of $\mathbb P^n$, so that $\mu$ is a global $M$-invariant generically stable measure centered on $\mathbb P^n\supseteq X$.

By \cref{prop:projectivecase}, there is a unique Radon measure $\nu$ on $(\mathbb P^n)^{an}$ such that $\nu_{\nu,M}=\mu$ (c.f. \cref{def:keislermeasure}). 
\end{proof}

\subsection{The correspondance}
We first give a corollary of \cref{thm:radon-barycentre-criterion} for a projective variety. 
\begin{remark}\label{rmk:boundedinprojective} Let $M$ be a  model of $\mathrm{ACVF}$.\\
For each
$i\leq n$, let $B_i=\left\{[x_0:\cdots:x_n]\in\mathbb{P}^n(M) \ | \ 
x_i\neq 0 \text{ and } v(x_i)\leq v(x_j)\text{ for all }j\right\}.$
This is a subset of the standard affine chart
$U_i$, and recall that $U_{i}$ is definably homeomorphic to the affine space $\mathbb{A}^n$. Via this identification, the set $B_i$ corresponds to $\mathcal{O}^n$, and is therefore
bounded. Moreover,
$\mathbb{P}^n(M)=\bigcup_{i=0}^n B_i,$ because any point has a coordinate of minimal valuation. Consequently, given any definable projective variety $X$ over $M$, 
every $M$-definable subset of a projective variety $X$ is bounded. Indeed one can take the cover of the $B_{i}$ and intersect it with $X$ to obtain a bounded cover of $X$ (in the sense of \cref{def:bounded}), and therefore for every definable subset $D\subseteq X$.
\end{remark}

\begin{corollary}
\label{cor:projective} Let $M \models \mathrm{ACVF}$ and assume that $\Gamma(M)=(\mathbb R,+, \leq, 0)$. Let $X$ be a projective variety defined over $M$.  Let $\nu$ be a Borel probability measure on $\widehat{X}(M)$, equipped with the Hrushovski Loeser topology (\emph{c.f.} \cref{def:hl-topology-stable-completion proj}). Then $\nu$ induces a Keisler measure $\mu_{\nu,M}$ on the boolean algebra of the definable subsets of $X$. Such measure $\mu_{\nu,M}$ has a
unique global $M$-invariant extension $\mu_\nu$ which is
generically stable over $M$.
\end{corollary}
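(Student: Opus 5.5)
The plan is to reduce to the affine bounded case treated in \cref{thm:radon-barycentre-criterion}, using the finite bounded cover of \cref{rmk:boundedinprojective}.

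\emph{Step 1: $\mu_{\nu,M}$ is a countably additive Keisler measure.} Embed $X$ in $\mathbb P^n$ and set $X_i:=X\cap B_i$. Each $X_i$ is an $M$-definable bounded subset of the chart $U_i\cong\mathbb A^n$. To see that $[D]_{\widehat X(M)}$ is Borel for every $M$-definable $D\subseteq X$, I would write $D=\bigcup_i (D\cap X_i)$. Each $D\cap X_i$ is then a Boolean combination, in the chart coordinates, of conditions $P=0$ and $v(P)\le v(Q)$. The maps $v(f)$ on $\widehat{U_i}(M)$ are continuous for the Hrushovski--Loeser topology of \cref{def:hl-topology-stable-completion proj}. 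Also, $[B_i]$ is itself cut out by the continuous conditions $v(x_j/x_i)\ge 0$ on $\widehat{U_i}(M)$, which is an open subset of $\widehat X(M)$. The argument of \cref{lem:borel-incidence-over-base} therefore applies chartwise. Countable additivity and $\mu_{\nu,M}(X)=1$ then follow exactly as in \cref{lem:local-radon-barycentre-is-measure}.

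\emph{Step 2: orthogonality to $\Gamma$.} Fix $e\in\Gamma(\U)^m$ and two extensions $\eta_1,\eta_2$ of $\mu_{\nu,M}$ to $Me$. It suffices to show that they agree on every $\varphi(x;e)\wedge X_i'(x)$, where $X_i':=X_i\setminus\bigcup_{j<i}X_j$ is the disjointified cover. Each $X_i'$ is $M$-definable and bounded. I would rerun the Claim in the proof of \cref{thm:radon-barycentre-criterion} on $X_i'$ inside the chart $U_i$. Apply \cref{lem:finite-gamma-normal-form} to get $g_i\colon X_i'\to S_i\subseteq\Gamma^{r}$. The image $g_i(X_i')$ is bounded by \cref{fact:imageofboundedisbounded} together with \cref{rmk:quantifiereliminationisthelaw}; here I expect to have to argue piecewise, noting that differences of valuations of regular functions on a bounded set remain bounded in $\Gamma_\infty$, possibly after splitting off the points where a function vanishes. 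The localized pushforward of $\mu_{\nu,M}$ along $g_i$ is countably additive by \cref{fact:imageiscountablyadditive}. By \cref{theo: radon and smooth in R}.(1) it is smooth, so $\eta_1$ and $\eta_2$ agree on $\varphi(x;e)\wedge X_i'$. Summing over $i$ gives $\eta_1=\eta_2$.

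\emph{Step 3: conclusion.} Choose a global $M$-invariant extension $\mu_\nu$ by \cref{fact:global-invariant-extension}. By Step 2 it is orthogonal to $\Gamma$. For generic stability I would invoke \cref{thm:main-generically-stable-measures-acvf}, direction $(3)\Rightarrow(2)$, and uniqueness then follows from \cite[Proposition~3.3]{genmeasures}.

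\emph{Main obstacle.} That direction of the theorem, via \cref{prop:orthogonal-then-stably-dominated}, needs $M$ spherically complete, which the statement does not assume. If the hypothesis is to be dropped, my first attempt is to go through the $\Gamma$-pushforwards instead. Their smoothness lets one approximate $\varphi(x;e)$ by $M$-definable sets $\psi_0\subseteq\varphi(x;e)\subseteq\psi_1$. This yields definability and finite satisfiability of $\mu_\nu$ through \cref{prop: inner and outer measures}, with \cref{prop:stableapproxthengenstable} as the model. Otherwise I would add spherical completeness to the hypotheses, as in \cref{thm:radon-barycentre-criterion}'s intended application. The boundedness bookkeeping in Step 2 is the other delicate point.
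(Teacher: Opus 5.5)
Your argument is essentially sound and uses the same ingredients as the paper's proof: the bounded cover of \cref{rmk:boundedinprojective}, the Claim in the proof of \cref{thm:radon-barycentre-criterion}, and $(3)\Rightarrow(2)$ of \cref{thm:main-generically-stable-measures-acvf}. It is, however, organised differently. The paper transports each $X_i=X\cap B_i$ through the chart $\phi_i$ to a bounded $O_i\subseteq\mathbb A^n$ and pushes the restriction of $\nu$ to $\widehat X_i$ forward. It then applies \cref{thm:radon-barycentre-criterion} as a black box on each chart, and glues the chartwise Keisler measures by inclusion--exclusion after checking agreement on the overlaps $D_{ij}$. How the chartwise generically stable extensions assemble into one unique global extension on $X$ is left implicit there. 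You instead verify orthogonality for $\mu_{\nu,M}$ itself on the disjoint bounded pieces $X_i'$, so neither overlap compatibility nor chart transport is needed. You then apply the characterisation theorem once, so existence, uniqueness and generic stability on all of $X$ come out directly. You also justify that $[X_i]$ is Borel, which the paper only asserts. Your concern about spherical completeness is justified, but it applies equally to the paper. \cref{thm:radon-barycentre-criterion} ends by invoking \cref{thm:main-generically-stable-measures-acvf}, whose $(3)\Rightarrow(1)$ rests on \cref{fact:implication-from-residue-value-restrictions}. Adding the hypothesis, which holds in the application \cref{thm:radon-generically-stable-correspondence}, is the right fix.

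Two points in your proposal need correcting.

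\emph{The workaround for spherical completeness.} It cannot work. Approximating $\varphi(x;e)$, for $e\in\Gamma(\U)^m$, by $M$-definable sets is just orthogonality restated through \cref{lem:unique-extension-criterion}. \cref{prop: inner and outer measures}(3), however, needs every formula, including those with valued-field parameters, to be $\mathbb B_x(M)$-approximated; that is, $\mu_\nu\rest_M$ would have to be smooth. This fails in general. A Dirac mass at the Gauss point yields the Dirac measure of a non-realized type, which is generically stable but not smooth (see the proof of \cref{fact:smoothmeasurezero}). Some control of the residue-field data, as in \cref{prop:orthogonal-then-stably-dominated}, is unavoidable.

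\emph{Boundedness in Step 2.} Your claim that differences of valuations of regular functions stay bounded on a bounded set is false: $-v(x)$ is unbounded below on $\mathcal O\setminus\{0\}$. Likewise, the canonical-parameter map of \cref{lem:finite-gamma-normal-form} need not have bounded image; the paper's appeal to \cref{fact:imageofboundedisbounded} at this point has the same defect. Use quantifier elimination instead:
\begin{itemize}
\item The $\Gamma$-variables of $\varphi(x;y)$ occur only in value-group atoms, which depend on $x$ through finitely many $v(P_j(x))$.
\item Split $X_i'$ into finitely many $M$-definable pieces on which the remaining $x$-only atoms are constant. On each piece, $\varphi(x;y)$ becomes $\theta(g(x);y)$ with $g=(v(P_1),\dots,v(P_s))$.
\item Each $v(P_j)$ is bounded below on a bounded set, so $g$ takes values in a box $[c,\infty]^s$, and \cref{theo: radon and smooth in R}(1) applies.
\end{itemize}
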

\begin{proof} Choose an $M$-definable projective embedding $j \colon X \to \mathbb{P}^{n}$ and let $\mathbb{P}^{n}=\bigcup_{i \leq n} B_{i}$ be the bounded cover of $\mathbb{P}^{n}$ as in \cref{rmk:boundedinprojective}. Recall that $B_{i} \subseteq U_{i}$, where $U_{i}$ is the standard affine cover. Then $X=\bigcup_{i=1}^{n} X_{i}$ where each $X_{i}\colon= X \cap B_{i} \subseteq X \cap U_{i}=W_{i}$. In particular,  $X_{i}$ is a bounded subset of $U_{i}$. For each $i \leq n$, let $\phi_{i}$ the definable homeomorphism of $U_{i}$ to the affine space $\mathbb{A}^{n}$, via this identification we identify $X_{i}$ with a bounded subset $O_{i}$ of $\mathbb{A}^{n}$. This identification extends to their stable completions $\widehat{X}_{i}$ to $\widehat{O}_{i} \subseteq \widehat{\mathbb{A}}^{n}$. \\
For each $i \leq n$ let $\nu_{i}$ be the restriction of $\nu$ to $\widehat{X}_{i}$, since $\widehat{B}_{i}$ is Borel within $\widehat{X}$ the restriction $\nu_{i}$ is still Borel on $\widehat{X}_{i}$ (equipped with the subspace topology). Let $\eta_{i}$ be the Borel measure on $\widehat{O}_{i}$, obtained by taking the pushforward of $\nu_{i}$ along $\phi_{i}$. Finally, let $\mu_{\eta_{i},M}$ be the Keisler measure induced by $\eta_{i}$ on the Boolean algebra of $M$-definable subsets of $O_{i}$ (as in \cref{def:basicdef}). By \cref{thm:radon-barycentre-criterion}, $\mu_{\eta_{i},M}$ has a unique global $M$-invariant extension $\mu$ which is generically stable over $M$.\\
Note that this naturally induces a Keisler measure $\mu_{i,M}$ on $X_{i}$: given a definable subset $D_{i} \subseteq X_{i}$ the set $\phi_{i}(D_{i})$ is a definable subset of $O_{i}$ and hence $\mu_{i}(D_{i})=\mu_{\eta_{i},M}(\phi_{i}(D_{i}))$. So $\mu_{i}$ is also $M$-definable and finitely satisfiable over $M$.\\

We will now define a Keisler measure on $X$: given a definable subset $D$ of $X$, we have that $D=\bigcup_{i=1}^{n} D_{i}$ where $D_{i}=D \cap X_{i}$. For $i \neq j$, let $D_{ij}=D_{i} \cap D_{j}$. The following claim implies that the measures on $D_i$ induce a unique measure on $X$ using inclusion-exclusion, completing the proof.
\begin{claim}
    $\mu_{i}(D_{ij})=\mu_{j}(D_{ij}).$
    \end{claim} 
\begin{claimproof}
By definition, $\mu_{i}(D_{ij})=\mu_{\eta_{i},M}(\phi_{i}(D_{ij}))$. And the later is the Keisler measure induce by $\eta_{i}$, obtained by taking the pushforward of $\nu_{i}$ along $\phi_{i}$. Similarly for the index $j$. Since the charts $\phi_{j}$ are compatible, then given a Borel subset $S \subseteq X_{i} \cap X_{j}$ we have that $\phi_{i}(S)=\phi_{j}(S)$. In particular, $\eta_{i}$ and $\eta_{j}$ agree on the measure of such sets, i.e. $\eta_{i}(\phi_{i}(S))=\eta_{j}(\phi_{j}(S))$. In particular, the set $D_{ij} \subseteq X_{i}\cap X_{j}$ is definable, so they are $\phi_{i}(D_{ij})=\phi_{j}(D_{ij})$. Consequently, $\mu_{\eta_{i},M}(D_{ij})=\mu_{\eta_{j},M}(D_{ij})$, as required.\end{claimproof} \\
The main statement then follows.  
\end{proof}

\begin{theorem}
\label{thm:radon-generically-stable-correspondence}
Let $M\models\ACVF$ be spherically complete with
$\Gamma(M)=(\mathbb R,+,\leq ,0),$
and let $X$ be a projective $M$-variety. Under the canonical
homeomorphism
$\widehat X(M)\simeq X^{\an},$
there is a canonical  bijection between Radon probability measures on the Berkovich analytification
of $X$ and the global $M$-invariant generically stable measures concentrating
on $X$.
\end{theorem}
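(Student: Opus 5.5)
I will define two maps and show they are mutually inverse. Let $\Phi$ send a Radon probability measure $\nu$ on $X^{\an}\simeq\widehat X(M)$ to the unique global $M$-invariant generically stable extension $\mu_\nu$ of $\mu_{\nu,M}$. Here we use the identification of \cref{fact:stable-completion-berkovich} and the Keisler measure of \cref{def:keislermeasure}. Since $X$ is projective, \cref{cor:projective} applies: it yields that $\mu_{\nu,M}$ is a Keisler measure over $M$ concentrating on $X$, and that it has exactly one global $M$-invariant extension, which is generically stable. So $\Phi$ is well defined. Radon measures on the compact Hausdorff space $X^{\an}$ are in particular Borel probability measures, so the hypothesis of \cref{cor:projective} is met. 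In the other direction, let $\Psi$ send a global $M$-invariant generically stable $\mu$ concentrating on $X$ to the Radon measure $\hat\nu$ given by \cref{cor:generically-stable-produces-radon}. That corollary says $\hat\nu$ is the unique Radon measure with $\mu_{\hat\nu,M}=\mu\rest_M$. Both maps are canonical because they are built only from the canonical homeomorphism and restriction/extension of measures.

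Next I check $\Phi\circ\Psi=\mathrm{id}$. Given $\mu$, set $\hat\nu=\Psi(\mu)$. Then $\mu_{\hat\nu,M}=\mu\rest_M$, and $\mu$ is itself a global $M$-invariant generically stable extension of this restriction. By the uniqueness clause in \cref{cor:projective}, which comes from \cite[Proposition~3.3]{genmeasures}, we get $\Phi(\hat\nu)=\mu$.

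Then I check $\Psi\circ\Phi=\mathrm{id}$. Given a Radon $\nu$, the measure $\Phi(\nu)=\mu_\nu$ restricts on $M$ to $\mu_{\nu,M}$, which is the Keisler measure induced by $\nu$. By the uniqueness in \cref{cor:generically-stable-produces-radon}, $\Psi(\mu_\nu)$ is the only Radon measure inducing $\mu_{\nu,M}$, and $\nu$ is such a measure, so $\Psi(\mu_\nu)=\nu$. Equivalently, $\Phi$ is injective: two Radon measures inducing the same Keisler measure on $\mathbb B_x(M)$ coincide.

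The main obstacle is justifying that injectivity, i.e.\ that a Radon measure on $X^{\an}$ is determined by its values on the sets $[D]_{\widehat X(M)}$. If the uniqueness in \cref{prop:projectivecase} is not enough on its own, I would argue directly. The pushforwards of $\nu$ to each $\mathbf{Trop}(\mathbb P^n,\iota)$ are Borel measures inducing the pushforward Keisler measures, by construction of $\mu_{\nu,M}$ via preimages. By \cref{rmk:uniquenessRadon} these Borel measures are determined by their values on $\R_\infty$-definable sets, hence by $\mu_{\nu,M}$. The uniqueness part of Prokhorov's theorem \cite[Theorem 5.5.1]{burba9} for Radon measures on the inverse limit $\varprojlim\mathbf{Trop}(\mathbb P^n,\iota)\cong(\mathbb P^n)^{\an}$ from \cref{fact: Payne} then determines $\nu$. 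Finally, because analytification preserves closed immersions, $X^{\an}$ is a closed subspace of $(\mathbb P^n)^{\an}$, so the statement for $X$ follows from the statement for $\mathbb P^n$.
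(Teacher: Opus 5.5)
Your proposal is correct and follows the paper's route. The paper's proof of this theorem is simply the combination of \cref{cor:projective} and \cref{cor:generically-stable-produces-radon}; your explicit check that the two assignments are mutually inverse spells out what that citation leaves implicit, using uniqueness of the $M$-invariant generically stable extension (\cite[Proposition~3.3]{genmeasures}) in one direction and uniqueness of the inducing Radon measure in the other. Your fallback argument for injectivity (push forward to the tropicalizations, apply \cref{rmk:uniquenessRadon}, then use uniqueness in Prokhorov's theorem) is exactly the uniqueness argument the paper gives inside the proof of \cref{prop:projectivecase}.
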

\begin{proof}
This follows from \cref{cor:projective} and \cref{cor:generically-stable-produces-radon}. 
\end{proof}

\noindent\textsc{Alf Onshuus. Departamento de Matemáticas, Universidad de los Andes, Carrera 1 no. 18A - 12, Edificio H, Bogotá, Colombia.}

Email address:\textsf{aonshuus@uniandes.edu.co}.\medskip

\noindent\textsc{Mariana Vicar\'ia. University of Chicago, Department of Mathematics, Eckhart Hall, 5734 S University Ave, Chicago, IL, 60637.}

Email address: \textsf{mvicaria@uchicago.edu}.
\end{document}